\documentclass[12pt,reqno]{amsart}

\usepackage{amssymb}
\usepackage{amscd}
\usepackage{amsfonts}
\usepackage{setspace}
\usepackage{mathtools}
\usepackage{enumitem}
\usepackage{xcolor}
\usepackage[pdfencoding=auto]{hyperref}
\hypersetup{
  colorlinks,
  linkcolor={blue!50!black},
  citecolor={green!40!black},
  urlcolor={blue!80!black}
}
\usepackage[capitalize]{cleveref}

\usepackage{graphicx}

\newtheorem{theorem}{Theorem}[section]
\newtheorem{lemma}[theorem]{Lemma}
\newtheorem{proposition}[theorem]{Proposition}
\newtheorem{corollary}[theorem]{Corollary}

\theoremstyle{definition}

\newtheorem{definition}[theorem]{Definition}

\newtheorem{remark}[theorem]{Remark}

\renewcommand{\leq}{\leqslant}
\renewcommand{\geq}{\geqslant}

\newcommand\D{\operatorname{D}}

\def\F{\mathbf{F}}

\def\C{\mathbf{C}}
\def\Z{\mathbf{Z}}
\def\E{\mathbf{E}}
\def\P{\mathbf{P}}

\def\H{\mathbf{H}}
\def\I{\mathbf{I}}

\def\eps{\varepsilon}
\newcommand{\dist}[2]{{\operatorname{d}[#1;#2]}} 

\newcommand{\bigdist}[2]{{\operatorname{d}\bigl[#1;#2\bigr]}} 
\newcommand{\Bigdist}[2]{{\operatorname{d}\Bigl[#1;#2\Bigr]}} 
\newcommand{\multidist}[1]{\D[#1]}
\newcommand{\bigmultidist}[1]{\D\bigl[#1\bigr]}
\newcommand{\Bigmultidist}[1]{\D\Bigl[#1\Bigr]}
\newcommand{\ent}[1]{\H[#1]}
\newcommand{\bigent}[1]{\H\bigl[#1\bigr]}
\newcommand{\Bigent}[1]{\H\Bigl[#1\Bigr]}
\newcommand{\mutual}[1]{\I[#1]}
\newcommand{\bigmutual}[1]{\I\bigl[#1\bigr]}
\newcommand{\Bigmutual}[1]{\I\Bigl[#1\Bigr]}
\newcommand{\tsum}[0]{{\textstyle\sum}}
\def\cI{\mathcal{I}}

\newcommand\xsum{S}


\newcommand{\uppar}[1]{\textup{(}#1\textup{)}} 

\parskip1mm

\numberwithin{equation}{section}

\begin{document}

\title[Marton's conjecture in bounded torsion]{Marton's conjecture in abelian groups with bounded torsion}

\author{W.~T.~Gowers}
\thanks{}
\address{Coll\`ege de France \\
11, place Marcelin-Berthelot \\
75231 Paris 05 \\
France \\
and Centre for Mathematical Sciences \\
Wilberforce Road \\
Cambridge CB3 0WB \\
UK}
\email{wtg10@dpmms.cam.ac.uk}

\author{Ben Green}
\thanks{BG is supported by Simons Investigator Award 376201.}
\address{Mathematical Institute \\
Andrew Wiles Building \\
Radcliffe Observatory Quarter \\
Woodstock Rd \\
Oxford OX2 6QW \\
UK}
\email{ben.green@maths.ox.ac.uk}

\author{Freddie Manners}
\thanks{FM is supported by a Sloan Fellowship.}
\address{Department of Mathematics \\
University of California, San Diego (UCSD)\\
9500 Gilman Drive \# 0112 \\
La Jolla, CA  92093-0112 \\ 
USA}
\email{fmanners@ucsd.edu}

\author{Terence Tao}
\thanks{TT is supported by NSF grant DMS-1764034.  }
\address{Math Sciences Building \\
520 Portola Plaza \\
Box 951555 \\
Los Angeles, CA 90095 \\
USA}
\email{tao@math.ucla.edu}

\begin{abstract}
  We prove a Freiman--Ruzsa-type theorem with polynomial bounds in arbitrary abelian groups with bounded torsion, thereby proving (in full generality) a conjecture of Marton.
Specifically, let $G$ be an abelian group of torsion $m$ (meaning $mg=0$ for all $g \in G$) and suppose that $A$ is a non-empty subset of $G$ with $|A+A| \leq K|A|$.
Then $A$ can be covered by at most $(2K)^{O(m^3)}$ translates of a subgroup of $H \leq G$ of cardinality at most $|A|$.
The argument is a variant of that used in the case $G = \F_2^n$ in a recent paper of the authors.
\end{abstract}
\maketitle

\section{Introduction}

This paper is a companion to the recent work~\cite{ggmt} of the authors. Here, we adapt the techniques of that paper to prove a conjecture of Marton, also known as the polynomial Freiman--Ruzsa conjecture, in all groups with bounded torsion.  We refer to our previous paper~\cite{ggmt} for further discussion of this conjecture.

Throughout the paper, $m \geq 2$ will be an integer. We say that an abelian group $G = (G,+)$ has \emph{torsion $m$} if $mg = 0$ for all $g \in G$. Note that we do not require $m$ to be minimal with this property.

Here is our main result.

\begin{theorem}\label{mainthm}
Let $G$ be an abelian group with torsion $m$ for some $m \geq 2$. Suppose that $A \subseteq G$ is a finite non-empty set with $|A + A| \leq K|A|$. Then $A$ is covered by at most\footnote{See \cref{notation-sec} for our conventions on asymptotic notation and sumsets.} $(2K)^{O(m^3)}$ cosets of some subgroup $H \leq G$ of size at most $|A|$. Moreover, $H$ is contained in $\ell A - \ell A$ for some $\ell \ll (2 + m\log K)^{O(m^3 \log m)}$.
\end{theorem}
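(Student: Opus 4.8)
The plan is to follow the "entropic" strategy of \cite{ggmt}, replacing $\F_2^n$ throughout by a general bounded-torsion abelian group $G$ and tracking how the torsion bound $m$ enters. The central object is the \emph{entropic Ruzsa distance} $\dist{X}{Y}$ between $G$-valued random variables, together with a suitable multidistance functional $\multidist{X_1,\dots,X_k}$. First I would set up the basic inequalities for this distance (symmetry, a triangle-type inequality, and the Ruzsa covering and submodularity estimates) in the bounded-torsion setting; these are formal and go through with only cosmetic changes. The key new input, compared to the characteristic-$2$ case, is that $x \mapsto -x$ is no longer the identity and that fibres of the addition map are not automatically "balanced", so several places where \cite{ggmt} exploited the involutive structure of $\F_2^n$ need to be replaced by an averaging argument over the $m$ torsion elements; this is exactly where factors of $m$ (and ultimately the exponent $O(m^3)$) are generated.

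The heart of the argument is an entropy-decrement / "fixed point" step: one assumes, for contradiction, that $A$ has no good covering, deduces that a near-minimiser of the multidistance functional exists, and then constructs from it a new configuration with strictly smaller multidistance, contradicting minimality unless the minimiser is already highly structured. Concretely, I would (i) reduce \cref{mainthm} to an entropic statement asserting that if $X$ is a $G$-valued random variable with small "$d^*$-type" self-distance then $X$ is close in Ruzsa distance to a uniform distribution on a subgroup; (ii) prove that statement by the multidistance minimisation machinery, where the torsion bound guarantees that the relevant subgroups have size controlled by $|A|$ (this uses that in torsion-$m$ groups any finite set generates, after Freiman-type iteration, a subgroup that cannot "run away" — the torsion caps the growth); and (iii) run the standard Ruzsa covering argument to pass from the entropic conclusion back to the combinatorial covering of $A$ by $(2K)^{O(m^3)}$ cosets of $H$ with $|H| \leq |A|$.

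For the "moreover" clause I would keep track, throughout the minimisation argument, of which iterated sumset $\ell A - \ell A$ the produced subgroup $H$ is contained in. Each step of the entropy-decrement iteration enlarges $\ell$ by a bounded factor depending on $m$, and the number of iterations needed to reach a near-minimiser is logarithmic in the relevant entropy defect, which is itself $O(m \log K)$; combining these gives the stated bound $\ell \ll (2+m\log K)^{O(m^3\log m)}$. Here the extra $\log m$ in the exponent comes from the fact that each "torsion-averaging" substep can itself require up to $O(\log m)$ internal iterations (e.g.\ to symmetrise over $\Z/m\Z$ one dyadically halves), so the per-round multiplier is $m^{O(\log m)}$ rather than $m^{O(1)}$.

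The main obstacle I anticipate is controlling the \emph{size} of the subgroup $H$: in $\F_2^n$ any subgroup generated along the way is automatically an $\F_2$-vector space and one has clean bounds, whereas in a general torsion-$m$ group one must ensure that the subgroup extracted from the entropic minimiser does not have size exceeding $|A|$, which requires a careful Ruzsa-type covering/stabiliser argument adapted to the entropy setting and is the place where the hypothesis "torsion $m$" is genuinely used rather than merely "every element has bounded order would suffice". A secondary difficulty is that the multidistance functional and its minimisers must be shown to behave well under the non-involutive group structure — in particular that the "$\tau$-functional" still attains its infimum and that near-minimisers can be perturbed — but I expect this to be a technical rather than a conceptual hurdle.
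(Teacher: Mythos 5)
Your high-level outline correctly identifies the overall shape of the argument — reduce to an entropic statement (\cref{main-entropy}), run an entropy-decrement iteration on a multidistance functional, and then pass back to covering via a Ruzsa-type argument — and you are right that the paper uses a ``multidistance'' functional in place of the $\tau$-functional of~\cite{ggmt} and that the number of decrement steps needed is logarithmic in the initial defect. You are also right that the subgroup size is controlled entropically at the end. So at the level of strategy this is in the right direction.

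However, the proposal misses the central new idea of the paper, and gets several of the specifics wrong. The crucial mechanism by which $m$-torsion enters is \emph{not} a vague ``averaging over the $m$ torsion elements'' or ``rebalancing fibres of the addition map.'' Rather, one takes an $m\times m$ grid of jointly independent variables $Y_{i,j}$ indexed by $i,j\in\Z/m\Z$, each a copy of $X_i$, and forms $Z_1=\sum i Y_{i,j}$, $Z_2=\sum j Y_{i,j}$, $Z_3=\sum(-i-j) Y_{i,j}$; the endgame rests on the exact identity $Z_1+Z_2+Z_3=0$ (valid precisely because $mg=0$ for all $g\in G$, which makes scalar multiplication by $\Z/m\Z$ well-defined), together with a ``multidistance chain rule'' — an iterated fibring lemma — showing that $Z_1,Z_2,Z_3$ are nearly pairwise independent conditionally on $W=\sum Y_{i,j}$. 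This is where the torsion hypothesis is genuinely used, not (as you suggest) for ``capping growth'' of the subgroup $H$; the bound $|H|\le|A|$ is obtained by the same entropic/covering argument as in the $\F_2$ case plus a trivial subdivision of $H$ into cosets. Second, your explanation of the $\log m$ in the exponent of $\ell$ (``dyadically halving to symmetrise over $\Z/m\Z$'') is incorrect: the $\log m$ arises because each decrement step enlarges the support by a factor $m^3$, and there are $s\asymp m^3\log(2+k)$ steps, so $\ell\le 6\,m^{3s}=(2+k)^{O(m^3\log m)}$. (The paper does also prove a Bukh-type bound $\ent{X-aY}-\ent{X}\ll(1+\log|a|)\dist{X}{Y}$, but remarks explicitly that even the cruder $O(|a|)$ bound would give the same exponents, so that logarithm is not the source of the $\log m$.) Third, the paper deliberately avoids a compactness/infimum argument (cf.\ Remark~\ref{rem:different}); it works with a hard decrement inequality for arbitrary $X_I'$ (Proposition~\ref{thm-main-multi-dec}), which is essential for tracking the containment $H\subseteq\ell A-\ell A$. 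Your proposal, by invoking minimisers and attaining infima, would lose exactly the bookkeeping needed for the ``moreover'' clause.
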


The $m=2$ case of this theorem (without the containment $H \subseteq \ell A - \ell A$, but with the bound $(2K)^{O(m^3)}$ replaced by the more explicit quantity $2K^{12}$) was\footnote{This bound was subsequently improved by Jyun-Jie Liao, first to $2K^{11}$ and then to $2K^9$; see \cite{jjl}.} established in our previous paper~\cite{ggmt}.  The requirement that $G$ be finite can be easily removed if $A$ is finite, since the torsion hypothesis then implies that the group generated by $A$ is also finite with torsion $m$.  The dependence of the exponents in this theorem on $m$ can probably be improved with additional effort.

In~\cite[Theorem 11.1]{sanders} it was shown under the hypotheses of \cref{mainthm} that $2A-2A$ contains a subgroup $H'$ whose size is at least $\exp(-C_m \log^4(2K)) |A|$, for some $C_m$ depending on $m$, where here and throughout the paper we write $\log^k(x)$ for $(\log x)^k$. By standard arguments this also implies that $|H'| \leq K^{O(1)} |A|$ and that $A$ can be covered by $\exp(C_m \log^4(2K))$ cosets of $H'$ (possibly after increasing $C_m$ slightly). A somewhat stronger bound in the direction of \cref{mainthm} was obtained by Konyagin (see~\cite{sanders2}), who proved a corresponding result with $\exp(C_{m,\eps} \log^{3+\eps}(2K))$ translates of $H'$ for any $\eps>0$; however, in Konyagin's result, unlike in the work of Sanders, one does not have $H' \subseteq 2A-2A$.

It is a well-known conjecture, the polynomial Bogolyubov conjecture, that one can find a subgroup $H \subseteq   2A-2A$ with size as large as $K^{-O_m(1)} |A|$. We have been unable to achieve this even in the $m=2$ case. However, our method naturally gives the weaker statement given in \cref{mainthm}, i.e., that we can find a subgroup $H$ of this size in $\ell A - \ell A$ for $\ell$ polylogarithmic in $K$ (rather than $\ell=2$ as in the conjecture).

The special case of \cref{mainthm} that is likely to be of most interest is when $G = \F_p^n$ for some finite field $\F_p$, a situation which has been widely studied in the literature. Here of course we can take $m=p$.  This special case of the result may be formulated in various equivalent ways, analogously to~\cite[Corollary 1.3]{ggmt}. For applications, the most useful one is likely to be a polynomially effective inverse theorem for the $U^3(\F_p^n)$-norm
\[ \|f\|_{U^3(\F_p^n)} \coloneqq \left( \E_{x,h_1,h_2,h_3 \in \F_p^n} \Delta_{h_1} \Delta_{h_2} \Delta_{h_3} f(x) \right)^{1/8},\]
where $\Delta_h f(x) \coloneqq f(x) \overline{f(x+h)}$ and we use the averaging notation $\E_{x \in A} \coloneqq \tfrac{1}{|A|} \sum_{x \in A}$.

\begin{corollary}\label{inverse-cor} Let $p$ be odd.
Let $f \colon \F_p^n \rightarrow \C$ be a $1$-bounded function, and suppose that $\Vert f \Vert_{U^3(\F_p^n)} \geq \eta$ for some $\eta$, $0 < \eta \leq \tfrac{1}{2}$. Then there is a quadratic polynomial $\phi \colon \F_p^n \rightarrow \F_p$ such that \[ |\E_{x \in \F_p^n} f(x) e_p(-\phi(x))| \gg \eta^{O(p^3)},\] where $e_p(x) \coloneqq e^{2\pi i x/p}$.
\end{corollary}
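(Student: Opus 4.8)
The plan is to run the standard derivation of the $U^3(\F_p^n)$ inverse theorem from a Freiman--Ruzsa-type input, but to invoke \cref{mainthm} at the one step where such an input is required; because \cref{mainthm} is polynomial in $K$, every loss along the way stays polynomial in $\eta$, and the exponent $O(p^3)$ is inherited directly from the $O(m^3)$ there. First I would locate derivative frequencies: using the identity $\|f\|_{U^3}^8 = \E_h \|\Delta_h f\|_{U^2}^4$, the bound $\|g\|_{U^2}^4 = \sum_\xi |\widehat{g}(\xi)|^4 \leq \|\widehat{g}\|_\infty^2 \|g\|_2^2$, and the $1$-boundedness of $f$, the hypothesis $\|f\|_{U^3} \geq \eta$ yields a set $B \subseteq \F_p^n$ with $|B| \gg \eta^{O(1)} p^n$ and, for each $h \in B$, a frequency $\psi(h) \in \F_p^n$ with $|\widehat{\Delta_h f}(\psi(h))| \gg \eta^{O(1)}$. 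A second, standard Cauchy--Schwarz manipulation of $\|f\|_{U^3}^8$ then shows that $\psi$ respects many additive quadruples, i.e., the graph $\Gamma_0 \coloneqq \{(h,\psi(h)) : h \in B\} \subseteq \F_p^n \times \F_p^n$ has additive energy $\gg \eta^{O(1)} |\Gamma_0|^3$, so the Balog--Szemer\'edi--Gowers theorem in its polynomial form produces $B' \subseteq B$ with $|B'| \gg \eta^{O(1)} p^n$ for which $\Gamma \coloneqq \{(h,\psi(h)) : h \in B'\}$ satisfies $|\Gamma + \Gamma| \leq \eta^{-O(1)} |\Gamma|$.

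Next I would apply \cref{mainthm} in the group $\F_p^n \times \F_p^n$, which has torsion $m = p$: it covers $\Gamma$ by $(2\eta^{-O(1)})^{O(p^3)} = \eta^{-O(p^3)}$ cosets of a subgroup $H$ with $|H| \leq |\Gamma|$. Since the first-coordinate projection is injective on $\Gamma$, pigeonholing $\Gamma$ into a single coset --- together with a routine linear-algebra argument, realising $H$ as a graph subgroup over a subspace, the fibre $H \cap (\{0\} \times \F_p^n)$ having size $\leq \eta^{-O(p^3)}$ and hence costing only a further polynomial factor --- yields a linear map $M \colon \F_p^n \to \F_p^n$ and a set $B'' \subseteq B'$ of size $\gg \eta^{O(p^3)} p^n$ on which $\psi$ agrees with an affine function $h \mapsto Mh + b$. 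Another standard Cauchy--Schwarz argument, using the recursive formula $\|f\|_{U^3}^8 = \E_{h,h'} |\widehat{\Delta_h \Delta_{h'} f}(0)|^2$ and the commutativity $\Delta_h \Delta_{h'} = \Delta_{h'} \Delta_h$, shows that $\langle \psi(h), h'\rangle = \langle \psi(h'), h\rangle$ for a $\gg \eta^{O(1)}$ fraction of pairs $(h,h')$; combined with the affine structure just established, this forces $M$ to be symmetric and the constant term $b$ to vanish, so --- after a harmless refinement, and after replacing $M$ by $\tfrac{1}{2}(M + M^{\top})$, the only use of the hypothesis that $p$ is odd --- we may assume $\psi(h) = Mh$ on $B''$ with $M = M^{\top}$.

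For the last step I would ``integrate''. Since $M = M^{\top}$ it is the Hessian of the quadratic form $\phi_0(x) \coloneqq \tfrac{1}{2}\langle Mx, x\rangle$, and a direct computation gives $\widehat{\Delta_h(f e_p(\phi_0))}(0) = e_p\bigl(-\tfrac{1}{2}\langle Mh, h\rangle\bigr)\, \widehat{\Delta_h f}(Mh) = e_p\bigl(-\tfrac{1}{2}\langle Mh, h\rangle\bigr)\, \widehat{\Delta_h f}(\psi(h))$, which has modulus $\gg \eta^{O(1)}$ for every $h \in B''$. Hence $\|f e_p(\phi_0)\|_{U^2}^4 = \E_h |\widehat{\Delta_h(f e_p(\phi_0))}(0)|^2 \gg \eta^{O(p^3)}$, so $f e_p(\phi_0)$ has a Fourier coefficient of modulus $\gg \eta^{O(p^3)}$, say at $c \in \F_p^n$; taking the quadratic polynomial $\phi(x) \coloneqq \langle c, x\rangle - \phi_0(x)$ then gives $|\E_{x \in \F_p^n} f(x) e_p(-\phi(x))| \gg \eta^{O(p^3)}$, which is the assertion. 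The main obstacle is not any individual step but the bookkeeping straddling the last two paragraphs: arranging the additive and symmetry information about $\psi$ so that a single application of \cref{mainthm} delivers a genuinely global and symmetric matrix $M$ (with vanishing constant term), all without losing more than a polynomial factor in $\eta$; the case $p = 2$ must be excluded precisely because the symmetrisation $\tfrac{1}{2}(M + M^{\top})$ then breaks down.
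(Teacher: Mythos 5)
Your overall strategy is the same as the paper's: extract derivative frequencies $\psi(h)$, show the graph $\{(h,\psi(h))\}$ has small doubling, apply \cref{mainthm} in $\F_p^n\times\F_p^n$ (torsion $p$) to linearize, symmetrize, and integrate. The first two paragraphs are sound (modulo sketchiness that is acceptable here, since the paper's own derivation is an acknowledged sketch invoking \cite{gt-inverseu3}), and your worry about where the real work lies is exactly right. But the resolution you propose for that worry does not work, and this is a genuine gap.

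The symmetry argument does \emph{not} force $M$ to be globally symmetric, nor $b$ to vanish. What the Cauchy--Schwarz manipulation gives is the relation $\langle(M-M^{\top})h,h'\rangle = \langle b, h-h'\rangle$ only on a positive-density set of pairs $(h,h')\in B''\times B''$, and from this one cannot conclude $M = M^{\top}$: there is no reason the skew part of $M$ should annihilate the span of $B''$, only that it behaves well on many pairs. More to the point, your proposed fix --- ``replacing $M$ by $\tfrac12(M+M^{\top})$'' --- destroys the property you need: after symmetrization the new matrix no longer satisfies $\psi(h)=\tfrac12(M+M^{\top})h+b$ on $B''$ unless $(M-M^{\top})h=0$ for $h\in B''$, which is precisely what you have not established. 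The correct output of the symmetry argument (this is \cite[\S 6, Step 2]{gt-inverseu3}, which the paper cites) is weaker: a \emph{subspace} $W\leq\F_p^n$ with $|W|/p^n\gg\eta^{O(p^3)}$ on which $Mw\cdot w' = Mw'\cdot w$. Once you only have symmetry on $W$, the ``integration'' in your last paragraph cannot be carried out globally. One must instead work inside cosets $y+W$: run the integration there, obtain a lower bound on the local norm $\|f\|_{u^3(y+W)}$ averaged over $y$ (this is \cite[\S 6, Step 3]{gt-inverseu3}), and then invoke the local-to-global inequality $\|f\|_{u^3(\F_p^n)}\geq p^{-n}|W|\cdot\|f\|_{u^3(y+W)}$ (\cite[Theorem 2.3(ii)]{gt-inverseu3}) to descend to a single global quadratic correlation; this last step also eats a factor $|W|/p^n \gg \eta^{O(p^3)}$ and therefore must be budgeted for. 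The vanishing of the linear offset $b$ is handled by the same localization rather than being ``forced'' outright. Without this subspace restriction and local-to-global step your last paragraph is not a proof, only the special case in which $M$ happens to be symmetric on all of $\F_p^n$.

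A smaller remark: the paper pulls out a factor of $2$ at the linear-algebra stage (writing the slope matrix as $2M$ and integrating $\phi_0(x)=\langle Mx,x\rangle$), which is where oddness of $p$ enters; your version puts the $\tfrac12$ in the quadratic form instead. These are equivalent, and your observation that this is the only use of $p$ odd is correct --- but it is the division by $2$, not the symmetrization $\tfrac12(M+M^{\top})$, that requires it, since the symmetrization step as you wrote it is not available.
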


For a proof of this corollary (which is well-known to follow from a result such as \cref{mainthm}, but this deduction is not explicitly in the literature), see \cref{u3-app}. The $p=2$ version of this corollary was established in~\cite{ggmt}.

As in~\cite{ggmt}, entropy notions will play a key role in the proof of \cref{mainthm}. For any set $G$, define a $G$-valued random variable to be a discrete random variable taking finitely many values in $G$. The \emph{entropy} $\ent{X}$ of a $G$-valued random variable is defined by the formula
\[ \ent{X} \coloneqq \sum_{x \in G} p_X(x) \log \frac{1}{p_X(x)}\]
where $p_X(x) \coloneqq \P(X=x)$ is the distribution function of $X$, with $\log$ denoting the natural logarithm, and with the usual convention $0 \log \tfrac{1}{0} = 0$.  The \emph{entropic Ruzsa distance} $\dist{X}{Y}$ between two $G$-valued random variables $X, Y$, first introduced in~\cite{ruzsa-entropy} and then studied further in~\cite{ggmt,gmt,tao-entropy}, is defined by the formula
\begin{equation}\label{ruzsa-def}
 \dist{X}{Y} \coloneqq \ent{\tilde X + \tilde Y} - \tfrac{1}{2} \ent{\tilde X} - \tfrac{1}{2} \ent{\tilde Y},
\end{equation}
where $\tilde X, \tilde Y$ are independent copies of $X,Y$ respectively.

Some basic inequalities involving entropic Ruzsa distance (valid for arbitrary abelian groups $G$) are collected in \cref{entropy-app}. For now, we give three representative examples of these inequalites that will be used repeatedly in our arguments.
The first is the entropic Ruzsa triangle inequality
\[ \dist{X}{Y} \leq \dist{X}{Z} + \dist{Z}{Y}\]
for any $G$-valued random variables $X,Y,Z$; see \cref{ruzsa-calculus}\ref{rc-i}.  The second is a Kaimonovich--Vershik--Madiman type inequality
\[ \bigdist{X}{\tsum_{i=1}^n Y_i} \leq 2 \sum_{i=1}^n \dist{X}{Y_i}\]
for any $G$-valued random variables $X,Y_1,\dots,Y_n$ with $Y_1,\dots,Y_n$ independent; see \cref{ruzsa-calculus}\ref{rc-x}.  The third is the inequality
\begin{equation}\label{crude-bukh}
  \ent{X-aY} - \ent{X} \ll (1 + \log |a|) \dist{X}{Y}
\end{equation}
for any non-zero integer $a$ and independent $G$-valued random variables $X,Y$; see \cref{lem-lin} (ii).  We note that a cruder inequality with $O(|a|)$ in place of $O(\log |a|)$ on the right-hand side is rather easier to prove (see \cref{lem-lin} (i)), and leads to the same exponents in our main results. However,~\eqref{crude-bukh} may be of some independent interest.

Another Ruzsa distance inequality worth mentioning is the contraction property
\[ \dist{\pi(X)}{\pi(Y)} \leq \dist{X}{Y}, \]
valid whenever $X,Y$ are $G$-valued random variables and $\pi \colon G \to G'$ is a homomorphism.  While we do not directly use this inequality in this paper, a more precise version of this inequality, which we call the ``fibring inequality'', played a crucial role in our previous paper~\cite{ggmt}, and a variant of that inequality, which we call the ``multidistance chain rule'', will play a similarly crucial role in our current arguments; see \cref{multidist-chain-rule} below. 

In~\cite{gmt}, it was shown (over $\F_2$) that the polynomial Freiman--Ruzsa conjecture can be formulated using the language of entropy, and this is the form of the conjecture we will work with here.

\begin{theorem}\label{main-entropy} Suppose that $G$ is an abelian group of torsion $m$.  Suppose that $X, Y$ are $G$-valued random variables. Then there exists a subgroup $H \leq G$ such that \[ \dist{X}{U_H}, \dist{Y}{U_H} \ll m^3 \dist{X}{Y}.\] Moreover, if $X,Y$ take values in some symmetric set $S \subseteq G$ containing the origin, then $H$ can be taken to be contained in $\ell S$ for some $\ell \ll (2 + m\dist{X}{Y})^{O(m^3 \log m)}$.
\end{theorem}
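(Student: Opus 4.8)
The plan is to adapt the ``minimal Ruzsa distance'' strategy of~\cite{ggmt} to the bounded-torsion setting, the two essential substitutions being the multidistance chain rule (\cref{multidist-chain-rule}) in place of the fibring inequality used there, and the linear-entropy estimates --- chiefly~\eqref{crude-bukh} and the Kaimanovich--Vershik--Madiman inequality --- in place of the ``halving'' operation $x \mapsto x/2$, which is unavailable once $G$ has torsion other than a power of $2$.

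I would first reduce the distance bound to a statement about a minimiser. Fix a parameter $\eta \asymp 1/m^3$ and consider the functional $\tau[X_1,X_2] \coloneqq \dist{X_1}{X_2} + \eta\bigl(\dist{X_1}{X} + \dist{X_2}{Y}\bigr)$ on pairs of $G$-valued random variables. A minimiser exists by a routine compactness argument: the entropic Ruzsa triangle inequality bounds the entropy, hence (up to a negligible perturbation) the effective support, of any near-optimal pair, and entropy is continuous on the resulting compact family. The crux is then the claim that \emph{at every minimiser one has $\dist{X_1}{X_2}=0$}. Granting it, the ``$100\%$ inverse theorem'' for entropic Ruzsa distance forces $X_1$ and $X_2$ to be uniform on cosets of a single finite subgroup $H \leq G$; since $\dist{X_i}{U_H}=0$, the triangle inequality and minimality give
\[ \dist{X}{U_H} \leq \dist{X}{X_1} \leq \eta^{-1}\,\tau[X_1,X_2] \leq \eta^{-1}\,\tau[X,Y] \ll m^3\,\dist{X}{Y}, \]
where $\tau[X,Y] \ll \dist{X}{Y}$ follows from $\dist{X}{X},\dist{Y}{Y} \leq 2\dist{X}{Y}$, and symmetrically for $Y$.

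The heart of the proof is the minimiser claim, argued by contradiction. Assuming $\dist{X_1}{X_2}>0$, I would construct $G$-valued random variables with strictly smaller $\tau$: take several independent copies of $X_1$ and of $X_2$, form suitable integer combinations of them --- over $\F_2$ only $\pm1$ coefficients are needed, but in the torsion-$m$ world coefficients of size up to $O(m)$ are unavoidable --- and apply the multidistance chain rule to the addition map so as to bound $\dist{X_1}{X_2}$ from above by an average, over a conditioning variable $Z$, of Ruzsa distances between the conditioned copies, together with error terms built from the mutual-information quantities that the chain rule supplies. The estimate~\eqref{crude-bukh} and the Kaimanovich--Vershik--Madiman inequality absorb the entropy cost of the size-$O(m)$ coefficients, and the (at most cubic in $m$) losses incurred at a handful of these steps are exactly what yield the factor $m^3$. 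When $\dist{X_1}{X_2}>0$, some specialisation $Z=z$ then produces a pair with $\tau < \tau[X_1,X_2]$, a contradiction. I expect this decrement to be the main obstacle: over $\F_2$ it rested squarely on exact halving, so replacing that mechanism demands both the flexibility of the multidistance chain rule and a careful accounting of the entropy incurred by size-$O(m)$ coefficients, kept polynomial in $m$ throughout.

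For the refinement, suppose $X,Y$ take values in a symmetric set $S \ni 0$. One then propagates the containment ``$\,\cdot\,\subseteq\ell_0 S$'' through the construction: every auxiliary variable built in the decrement step from variables supported in $kS$ is itself supported in $k'S$ with $k'/k$ bounded by a fixed power of $m$. Running the improvement constructively from $(X,Y)$ rather than invoking an abstract minimiser, and noting that each pass lowers $\tau$ by a quantitative (geometric) amount, one reaches after $O\!\bigl(m^3\log(2+m\dist{X}{Y})\bigr)$ passes a pair with $\dist{X_1}{X_2}=0$ supported in $\ell S$; the subgroup $H$ it produces lies in the difference set of that support, whence $H \subseteq \ell S$. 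Tracking the support bounds through the chain-rule manipulations while ensuring neither the pass count nor the per-pass blow-up exceeds what the stated bound $\ell \ll (2 + m\dist{X}{Y})^{O(m^3\log m)}$ permits is the remaining point requiring genuine care.
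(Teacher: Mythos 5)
Your high-level plan correctly identifies the supporting tools — the multidistance chain rule, the Bukh-type estimate~\eqref{crude-bukh}, the Kaimanovich--Vershik--Madiman inequality, and the entropic Balog--Szemer\'edi--Gowers lemma — but the central step of the argument, the decrement at (or near) a minimiser, is not actually carried out, and this is where essentially all of the difficulty lies. You describe it as ``take several independent copies of $X_1$ and of $X_2$, form suitable integer combinations of them \dots\ and apply the multidistance chain rule,'' but this compresses into one sentence what occupies roughly half the paper (the construction of the $m\times m$ array $Y_{i,j}$, the column-permutation hypothesis needed in \cref{key}, the three applications of \cref{key} giving pairwise near-independence of $Z_1,Z_2,Z_3$ conditionally on $W$, the Ruzsa-calculus estimates of \cref{lem:ruzsa-w-z}, and the BSG-based extraction of $U_w$ in \cref{lem:get-better}). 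Without specifying the integer combinations and showing that the resulting conditioned variable has strictly smaller functional value, the argument is not a proof; it is a restatement of the goal.

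More structurally, you retain the two-variable functional $\tau[X_1,X_2] = \dist{X_1}{X_2} + \eta(\dist{X_1}{X}+\dist{X_2}{Y})$ from the $\F_2$ paper, but the paper replaces this with the $m$-tuple multidistance functional $\multidist{(X_i)_{i\in I}}$, and states explicitly that this replacement is a key innovation not present in~\cite{ggmt}. The reason is that the endgame produces a decrement estimate that is intrinsically about an $m$-tuple: the array $(Y_{i,j})_{i,j\in\Z/m\Z}$ must have each column a permutation of the ambient tuple $X_I$ for \cref{key} to apply, and the contradiction comes from comparing $\multidist{(U_w)_I}$ to $\multidist{X_I}$ via~\eqref{main-contra}. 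It is not clear how to phrase that comparison as a strict drop in a pairwise $\tau$; the natural repair is to reduce to $X=Y$ and work with the $m$-fold multidistance, which is precisely what the paper does, so in following that path you would be rediscovering its actual approach rather than executing your stated one. Finally, the compactness step — ``a minimiser exists by a routine compactness argument'' — needs care when $G$ is infinite, and is incompatible with tracking support; the paper deliberately avoids it (see \cref{rem:different}) and runs an explicit iteration, which is also what your refinement paragraph ends up proposing, leaving you with two structurally different arguments for the two halves of the statement.
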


The equivalence of this result and \cref{mainthm} can be obtained by making simple modifications to the arguments of~\cite[Section 8]{gmt}, so that they work over arbitrary torsion groups rather than $\F_2$. The important direction for us, namely that \cref{main-entropy} implies \cref{mainthm}, is also the easier one. We give the argument in full in \cref{comb-app}.

A variant of \cref{main-entropy} in the case $m=2$ was established in~\cite{ggmt}.  There, the idea was to work with minimizers $X,Y$ of a certain functional
\[ \tau[X;Y] \coloneqq \dist{X}{Y} + \tfrac{1}{9} \dist{X^0}{X} + \tfrac{1}{9} \dist{Y^0}{Y} \]
where $X^0,Y^0$ were the original random variables of interest.
If one took $X,Y,\tilde X, \tilde Y$ to be four independent copies of $X,Y,X,Y$ respectively, it was shown in~\cite{ggmt} that entropy inequalities (and in particular the aforementioned  ``fibring identity'') could be used to establish that any two of the random variables $X+Y$, $\tilde X+Y$, $X+\tilde X$ were nearly independent of each other relative to $X+Y+\tilde X+\tilde Y$.
In an ``endgame'' step, the above fact was then shown to be in conflict with the characteristic $2$ identity
\[
  (X+Y) + (\tilde X+Y) + (X + \tilde X) = 0,
\]
except when $X, Y$ were already translates of a uniform distribution on a subspace.

In this paper we adopt a slight variant of the approach.
We replace the $\tau$ functional by a ``multidistance'' functional
\[
  \multidist{ (X_i)_{i=1}^m } = \bigent{ \tsum_{i=1}^m X_i } - \frac{1}{m} \sum_{i=1}^m \ent{X_i}
\]
(where we take $X_1,\dots,X_m$ to be independent here for simplicity of notation).
Instead of working with minimizers, we work with approximate minimizers, for which the multidistance cannot be significantly improved without making significant changes to the $X_i$.
Considering a tuple of $m^2$ independent random variables $X_{i,j}$ with $i,j \in \Z/m\Z$, with each $X_{i,j}$ an independent copy of $X_{i}$, we will use a kind of iterated variant of the fibring identity, which we call the \emph{multidistance chain rule}, to show that the variables
\[
  \sum_{i,j} i X_{i,j},\qquad \sum_{i,j} j X_{i,j},\qquad \sum_{i,j} (-i-j) X_{i,j}
\]
are pairwise approximately independent relative to $\sum_{i,j} X_{i,j}$.
The ``endgame'' is then based on finding a conflict with the obvious identity
\[
  \sum_{i,j} i X_{i,j} + \sum_{i,j} j X_{i,j} + \sum_{i,j} (-i-j) X_{i,j} = 0.
\]
This turns out to be enough to conclude using similar tools to those wielded in~\cite{ggmt}, that is to say entropic sumset estimates (which we informally term ``entropic Ruzsa calculus'') and in particular the entropic Balog--Szemer\'edi-Gowers theorem (established in~\cite[Lemma 3.3]{tao-entropy}, although we use a minor variant detailed in~\cite{ggmt}).

\subsection{Notation and terminology}\label{notation-sec} %
The notations $X = O(Y)$, $X \ll Y$, or $Y \gg X$ all mean that $|X| \leq CY$ for an absolute constant $C$. Different instances of the notation may imply different constants $C$.
We use the usual sumset notation $A \pm B \coloneqq \{a \pm b: a \in A, b \in B\}$ and $-A \coloneqq \{-a: a \in A \}$, and let $\ell A = A + \cdots + A$ denote the sum of $\ell$ copies of $A$. A set $S$ is symmetric if $S=-S$.

We define the \emph{mutual information} $\mutual{X:Y}$ by the formula
\[  \mutual{X:Y} \coloneqq \ent{X} + \ent{Y} - \ent{X,Y}. \]
The \emph{conditional entropy} $\ent{X|Y}$ of $X$ relative to $Y$ is given by the formula
\[  \ent{X|Y} \coloneqq \sum_y p_Y(y) \ent{X\,|\,Y\mathop{=}y} \]
where $y$ ranges over the support of $p_Y$, and $(X\,|\,Y\mathop{=}y)$ denotes the random variable $X$ conditioned to $Y\mathop{=}y$; we observe the chain rule
\begin{equation}\label{chain-rule}
  \ent{X,Y} = \ent{X|Y} + \ent{Y}
\end{equation}
and conditional chain rule
\begin{equation}\label{chain-rule'}
  \ent{X,Y|Z} = \ent{X|Y,Z} + \ent{Y\,|\,Z}
\end{equation}
for arbitrary random variables $X,Y,Z$ defined on the same sample space; see for instance~\cite[Appendix A]{ggmt}. Here and in the sequel we are adopting the convention of omitting parentheses when the meaning is clear from context, for instance abbreviating $\ent{(X,Y)|(Z,W)}$ as $\ent{X,Y|Z,W}$, $\multidist{X_I|(Y_I,Z_I)}$ as $\multidist{X_I|Y_I,Z_I}$, and so forth.

The \emph{conditional mutual information} $\mutual{X:Y|Z}$ of two random variables $X,Y$ relative to a third $Z$ (that are all defined on the same sample space) is similarly defined by
\[  \mutual{X:Y|Z} \coloneqq \sum_z p_Z(z) \bigmutual{(X \,|\, Z\mathop{=}z) : (Y\,|\,Z\mathop{=}z)} \]
or equivalently
\begin{align}
  \nonumber
  \mutual{ X:Y|Z } &= \ent{X|Z} + \ent{Y|Z} - \ent{X,Y|Z}\\
  & = \ent{X,Z} + \ent{Y,Z} - \ent{X,Y,Z} - \ent{Z}.
  \label{cond-form-mutual}
\end{align}
We recall the submodularity inequality
\begin{equation}\label{nonneg-cond}
  \mutual{ X:Y \,|\,Z } \geq 0;
  \end{equation}
see e.g.,~\cite[(A.8)]{ggmt}.

In a similar vein, if $X$ and $(Y,W)$ are random variables, with $X, Y$ taking values in an abelian group $G$, we define the \emph{conditional Ruzsa distance} $\dist{X}{Y|W}$ by the formula
\begin{equation}\label{cond-form}
  \dist{X}{Y|W} = \sum_w p_W(w) \bigdist{X}{(Y\,|\,W\mathop{=}w)}.
\end{equation}
This is a special case of a somewhat more general conditional Ruzsa distance $\dist{X|Z}{Y|W}$ in which one conditions $X$ and $W$ separately; see~\cite{ggmt}.  However, we will only need the partially conditional Ruzsa distance~\eqref{cond-form} in this paper.\vspace*{8pt}

\emph{Acknowledgement.} We thank Zach Hunter for a number of comments and corrections on an earlier version of the paper.

\section{Induction on multidistance}

To describe our argument, we first introduce the notion of \emph{multidistance}. The need for this notion is a key innovation in this paper that was not required in~\cite{ggmt}.

It is convenient to introduce the following notational convention: if $I$ is a finite indexing set, then $X_I$ denotes a tuple $(X_i)_{i \in I}$ of random variables (and similarly, for example, $Y_I$ denotes $(Y_i)_{i \in I}$ and $X'_I$ denotes $(X'_i)_{i \in I}$). Usually all the variables in such a tuple will be $G$-valued for some abelian group $G$.

\begin{definition}\label{multi-def}
Let $G$ be an abelian group and let $X_{I}$ be a non-empty finite tuple of $G$-valued random variables. Then we define
\[
  \multidist{ X_{I}} \coloneqq \bigent{ \tsum_{i \in I} \tilde X_i } - \frac{1}{|I|} \sum_{i \in I} \ent{\tilde X_i},
\]
where the $\tilde X_i$ are independent copies of the $X_i$.
\end{definition}
From \cref{ruzsa-calculus}\ref{rc-ii} we see that $\bigent{\sum_{i \in I} \tilde X_i} \geq \ent{X_i}$ for all $i \in I$, and so on averaging we conclude that the multidistance is always non-negative.  It is also clearly invariant with respect to permutations of the $X_i$. We remark that, in the case $I = \{1,2\}$, $\multidist{ X_{\{1,2\}}}$ is equal to $\dist{X_1}{-X_2}$ (and hence, if $G$ is a vector space of characteristic $2$, is the same as $\dist{X_1}{X_2}$). This observation explains why we use the term multidistance; however, one should not take this terminology too seriously. The basic properties of multidistance are further developed in  \cref{ruzsa-multid-sec} below.

We will deduce \cref{main-entropy} from the following statement involving multidistance.

\begin{proposition}\label{thm-main-multi} Let $G$ be an abelian group of torsion $m$, and let $I = \{1,2,\dots, m\}$. If $X_{I}$ is a tuple of $G$-valued random variables then there exists a subspace $H\leq G$ such that
\[ \sum_{i \in I} \dist{X_i}{U_H}  \ll  m^3 \multidist{ X_{I} }.\]
Moreover, if all the $X_i$ take values in some symmetric set $S \subseteq G$ containing the origin, then $H$ can be taken to be contained in $\ell S$ for some $\ell \ll (2 + \multidist{X_I})^{O(m^3 \log m)}$.
\end{proposition}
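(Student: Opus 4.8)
The plan is to adapt the minimization strategy of~\cite{ggmt}, replacing the functional $\tau$ there by one built from the multidistance. Write $X^0_I = (X^0_i)_{i \in I}$ for the given tuple, and for an arbitrary tuple $Y_I$ of $G$-valued random variables put
\[
  \tau[Y_I] \coloneqq \multidist{Y_I} + c_m \sum_{i \in I} \dist{X^0_i}{Y_i},
\]
where $c_m>0$ is a suitably small constant (one may take $c_m \asymp m^{-3}$) to be fixed later. Since $\tau \geq 0$ and $\tau[X^0_I] = \multidist{X^0_I} < \infty$, a minimizer $X_I$ of $\tau$ exists — one may first reduce to a finite ambient group, or work throughout with approximate minimizers, exactly as in~\cite{ggmt}. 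By minimality, $\multidist{X_I} \leq \tau[X_I] \leq \tau[X^0_I] = \multidist{X^0_I}$ and, since $\multidist{X_I}\geq 0$, also $\sum_i \dist{X^0_i}{X_i} \leq c_m^{-1}\multidist{X^0_I} \ll m^3 \multidist{X^0_I}$. By the entropic Ruzsa triangle inequality it therefore suffices to produce a subgroup $H \leq G$ with $\sum_{i \in I}\dist{X_i}{U_H} \ll m^3 \multidist{X_I}$; the second assertion will then follow by re-running the final extraction of $H$ on the original variables $X^0_i$, which by hypothesis lie in $S$.

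Write $k \coloneqq \multidist{X_I}$. The first main step extracts near-independence at the minimizer. Pass to a family of $m^2$ \emph{independent} variables $X_{i,j}$, indexed by $(i,j) \in (\Z/m\Z)^2 \cong I \times I$, with each $X_{i,j}$ a copy of $X_i$, and put
\[
  U \coloneqq \sum_{i,j} i\, X_{i,j}, \qquad V \coloneqq \sum_{i,j} j\, X_{i,j}, \qquad W \coloneqq \sum_{i,j} (-i-j)\, X_{i,j}, \qquad T \coloneqq \sum_{i,j} X_{i,j},
\]
so that $U + V + W = 0$ identically; this identity plays the role of the characteristic-$2$ relation of~\cite{ggmt} (and it is here, together with $|I| = m$, that the torsion hypothesis is exploited in the endgame). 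Applying the multidistance chain rule (\cref{multidist-chain-rule}) to the linear forms $(i,j)\mapsto i$, $(i,j)\mapsto j$ and $(i,j)\mapsto -i-j$, one expands $k$ — together with multidistances of various conditioned sub-tuples, all $\geq 0$ — as a sum of conditional mutual informations among $U, V, W$ given $T$, plus quantities that a small perturbation of the $X_i$ would decrease. Invoking minimality of $\tau$ to exclude such perturbations (each produces an admissible competitor tuple of $m$ variables, obtained by conditioning and recombining the $X_{i,j}$) forces
\[
  \mutual{U:V|T} + \mutual{U:W|T} + \mutual{V:W|T} \ll m^{O(1)} k.
\]
This is the analogue of the assertion in~\cite{ggmt} that $X+Y$, $\tilde X+Y$, $X+\tilde X$ are pairwise nearly independent relative to $X+Y+\tilde X+\tilde Y$; the bookkeeping with $m^2$ variables, integer coefficients of size $O(m)$, and the $\tfrac{1}{|I|}$ normalization in $\multidist{\cdot}$ is what produces the power $m^3$.

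The endgame then proceeds as in~\cite{ggmt}. Since $W = -U-V$ is an affine function of $(U,V)$ while being, conditionally on $T$, nearly independent of each of $U$ and $V$, the entropic Balog--Szemer\'edi--Gowers theorem (in the form used in~\cite{ggmt}, after~\cite[Lemma 3.3]{tao-entropy}), combined with the entropic Ruzsa calculus of \cref{ruzsa-calculus}, the bound~\eqref{crude-bukh}, and the Kaimanovich--Vershik--Madiman inequality, yields a subgroup $H \leq G$ with $\dist{X_i}{U_H} \ll m^{O(1)} k$ for each $i \in I$; choosing $c_m$ appropriately gives $\sum_{i}\dist{X_i}{U_H} \ll m^3 k$, which completes the first part. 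For the containment $H \subseteq \ell S$ one combines this with $\sum_i\dist{X^0_i}{X_i}\ll m^3\multidist{X^0_I}$ to deduce $\dist{X^0_i}{X^0_j}\ll m^3\multidist{X^0_I}$ and $\dist{X^0_i}{X^0_i}\ll m^3\multidist{X^0_I}$ for all $i,j$, and re-runs the entropic Balog--Szemer\'edi--Gowers and Ruzsa-covering steps directly on the $X^0_i$, which lie in $S$, keeping track of supports: using $aS \subseteq |a|S$ for integers $|a|\leq m$ and the fact that these steps place $H$ inside a bounded number of copies of $S$ — with the integer scalings of size $\leq m$ controlled via~\eqref{crude-bukh}, the source of the $\log m$ — one obtains $H \subseteq \ell S$ with $\ell \ll (2 + \multidist{X^0_I})^{O(m^3\log m)}$, of the required form.

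The step I expect to be the main obstacle is the first: setting up the multidistance chain rule so that the ``improving perturbations'' it detects correspond to genuine competitors in the minimization of $\tau$ — one must conditionally split the $m^2$ auxiliary variables and reassemble a tuple of exactly $m$ random variables — and then pushing this through to the near-independence of $U, V, W$ given $T$ with only polynomial-in-$m$ loss. The second delicate point is making the entropic Balog--Szemer\'edi--Gowers endgame close with polynomial rather than exponential bounds, as in~\cite{ggmt}, now with the added burden of tracking supports precisely enough to obtain the stated bound on $\ell$.
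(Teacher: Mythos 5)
Your outline correctly identifies the three structural ingredients — the multidistance chain rule applied to an $m\times m$ grid of copies, the resulting near‐independence of $U,V,W$ given $T$ via the identity $U+V+W=0$, and an entropic BSG endgame — and these are indeed the core of the paper's proof of this proposition. However, you organise the argument around a \emph{minimizer} of the functional $\tau[Y_I]=\multidist{Y_I}+c_m\sum_i\dist{X_i^0}{Y_i}$, whereas the paper deliberately avoids this. It instead proves a contrapositive decrement statement (Proposition~\ref{thm-main-multi-dec}: if~\eqref{multidist-dec} always fails then $k=0$), iterates it $s\ll m^3\log(2+k)$ times to drive the multidistance below an absolute threshold $c_0$, and only then extracts the subgroup via Proposition~\ref{multi-vsmall} (which is an application of~\cite[Theorem 1.3]{gmt}, not of BSG). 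The paper flags exactly this choice in Remark~\ref{rem:different}: the compactness/minimizer route is ``on some level equivalent'' for the first assertion, but the iterative route is what lets one ``retain some control of $H$ in terms of the support of the $X_i$.''

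This is where your proposal has a genuine gap. For the second assertion you need $H\subseteq\ell S$ with $\ell$ polynomial in $(2+\multidist{X_I^0})$. In the paper this falls out for free: at step $t$ the random variables live in $m^{3t}S$ by construction, so after $s$ steps the subgroup coming from Proposition~\ref{multi-vsmall} sits inside $6m^{3s}S$. In your setup the minimizer $X_I$ of $\tau$ has no a priori support bound whatsoever (nothing in the minimization keeps the $X_i$ inside a bounded multiple of $S$), so the subgroup produced from the $Z_j$ and the BSG conditioning is equally uncontrolled. Your proposed patch — ``re-run the final extraction of $H$ on the original variables $X^0_i$'' — does not work as stated: the near-independence of $U,V,W$ given $T$, which the whole endgame hinges on, is a property of the \emph{minimizer} $X_I$, established by playing admissible competitors against $\tau$-minimality; the original $X^0_i$ need not satisfy it, and the chain of Ruzsa-calculus inequalities between them is not tight enough to transfer it. (The fact that $\dist{X^0_i}{X^0_j}\ll m^3 k$ is far weaker than the required conditional mutual-information bound $\ll m^{O(1)}k$.) A second, smaller imprecision: entropic BSG does not directly ``yield a subgroup $H$''; it yields, after conditioning, a random variable $U_w$ with $\dist{U_w}{U_w}$ small, and you still need a separate argument — in the paper it is fed back into the decrement hypothesis to force $k=0$, and a subgroup is only ever extracted at the very end via~\cite[Theorem 1.3]{gmt} once the multidistance is below an absolute constant. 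You would need to spell out the analogous final step in your minimizer framework.

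In short: your first assertion is essentially the paper's argument in compactness clothing and is fine modulo existence of minimizers, but your treatment of the support bound $H\subseteq\ell S$ is not a proof and cannot be salvaged without abandoning the minimizer framework (or at least supplementing it with a discretization that reimposes support control, which is effectively what the paper's iteration does).
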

The deduction of \cref{main-entropy} from this is fairly routine and is given in \cref{ruzsa-multid-sec}.

We will prove \cref{thm-main-multi} by a kind of induction on multidistance. The key technical result which drives this is the following proposition (cf.~\cite[Proposition 2.1]{ggmt}, as well as~\cite[Remarks 2.3,~2.4]{ggmt}).

\begin{proposition}\label{thm-main-multi-dec} Let $G$ be an abelian group with torsion $m$. Set $\eta \coloneqq c/m^3$ for a sufficiently small absolute constant $c>0$, and set $I := \{1,2,\dots, m\}$. If $X_{I}$ is a tuple of $G$-valued random variables with $\multidist{ X_{I}} > 0$, then there exists a tuple $X'_{I}$ of $G$-valued random variables such that
we have the multidistance decrement
\begin{equation}\label{multidist-dec}
 \multidist{ X'_{I} } \leq (1 - \eta) \multidist{ X_{I} } - \eta \sum_{i \in I} \dist{X_i}{X'_i}.
\end{equation}
Moreover, if all the $X_i$ take values in some symmetric set $S \subseteq G$ containing the origin, then the $X'_i$ can be chosen to take values in $m^3 S$.
\end{proposition}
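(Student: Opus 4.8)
The plan is to follow the overall strategy of~\cite[Proposition 2.1]{ggmt}, but with the $\tau$-functional replaced by the multidistance $\multidist{X_I}$ and with the characteristic-2 trick $(X+Y)+(\tilde X+Y)+(X+\tilde X)=0$ replaced by its $\Z/m\Z$-analogue. First I would set up the ``blown-up'' system: take $m^2$ independent variables $X_{i,j}$, $i,j\in\Z/m\Z$, with $X_{i,j}$ an independent copy of $X_i$, and consider the three sums $U=\sum_{i,j} i X_{i,j}$, $V=\sum_{i,j} j X_{i,j}$, $W=\sum_{i,j}(-i-j)X_{i,j}$, which satisfy $U+V+W=0$ identically (this is where the torsion hypothesis $mg=0$ is used). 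The engine is the multidistance chain rule (\cref{multidist-chain-rule}, promised in the excerpt): applied appropriately it expresses the multidistance of suitable conditioned tuples in terms of $\multidist{X_I}$ and of conditional mutual informations among $U,V,W$ (conditioned on $S=\sum_{i,j}X_{i,j}$). One then argues by contradiction: suppose no good $X'_I$ exists, i.e. for \emph{every} choice of new tuple $X'_I$ the right-hand side of~\eqref{multidist-dec} is exceeded; this says the current $X_I$ is, up to the slack $\eta\sum_i\dist{X_i}{X'_i}$, an approximate minimizer of multidistance that cannot be improved.

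The middle of the argument is to show that this approximate-minimality forces the three mutual informations $\mutual{U:V\mid S}$, $\mutual{U:W\mid S}$, $\mutual{V:W\mid S}$ to be small — of size $O(\eta^{-1}) \cdot (\text{small})$, or more precisely $\ll m^{O(1)}\multidist{X_I}$ times a small constant — relative to $\multidist{X_I}$. The mechanism: if one of these mutual informations were not small, then conditioning on the relevant pair of sums and averaging would produce, via the chain rule, a tuple whose multidistance is strictly smaller than $(1-\eta)\multidist{X_I}$ minus the distance cost, because conditioning can only decrease (conditional) multidistance while the mutual-information term gives a genuine gain. This is the direct analogue of the ``entropy decrement'' half of~\cite{ggmt}, and the bookkeeping — tracking how the $m$-fold sum interacts with the $m^2$-fold blow-up, and checking that all the $\ll$-constants collapse into the single factor $m^3$ in $\eta=c/m^3$ — is the part that needs care. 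I expect the precise constant $m^3$ (rather than, say, $m^2$ or $m^4$) to emerge from how many times one loses a factor of $m$: once from passing between $I$-indexed and $\Z/m\Z\times\Z/m\Z$-indexed systems, once from the Kaimonovich–Vershik–Madiman-type inequality, and once more from~\eqref{crude-bukh}-type linear-image estimates with coefficients $i,j\in\{1,\dots,m\}$ of size $O(m)$.

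The ``endgame'' then derives the actual decrement. Having forced $U,V,W$ to be pairwise nearly independent given $S$, while also satisfying $U+V+W=0$, we are in exactly the situation handled by the entropic Ruzsa calculus and the entropic Balog–Szemerédi–Gowers theorem (the variant in~\cite{ggmt}): near-independence plus an additive relation among three variables lets one extract a subgroup-like structure and, crucially, \emph{build the improved tuple} $X'_I$ — one takes the $X'_i$ to be (essentially) the relevant conditioned sums, which by construction live in $m^3 S$ when the $X_i$ live in $S$ (each $X'_i$ is a bounded-length signed sum of copies of the $X_i$ with total length $O(m^3)$, after accounting for the $i,j$ coefficients and the $m^2$ terms). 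Chasing the inequalities shows this $X'_I$ satisfies~\eqref{multidist-dec}, contradicting the assumption. The main obstacle, as in the $\F_2$ case, is the endgame BSG step: one must verify that the entropic Balog–Szemerédi–Gowers machinery, together with the multidistance chain rule, still closes the loop quantitatively when the ``$=0$'' relation has three terms coming from an $m^2$-fold blow-up rather than the clean three-term characteristic-2 identity — in particular that the distance cost $\eta\sum_i\dist{X_i}{X'_i}$ can be absorbed, which is what makes this a statement about approximate (rather than exact) minimizers and is needed for the induction in \cref{thm-main-multi} to terminate.
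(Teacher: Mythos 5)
Your roadmap is essentially the paper's proof: the contrapositive framing, the $m\times m$ blow-up $(Y_{i,j})_{i,j\in\Z/m\Z}$, the three weighted sums $Z_1,Z_2,Z_3$ with $Z_1+Z_2+Z_3=0$, the multidistance chain rule used to show the pairwise conditional mutual informations $\mutual{Z_a:Z_b\,|\,W}$ are $O(\eta m^2 k)$, and the entropic Balog--Szemer\'edi--Gowers endgame producing a candidate $X'_i=U_w$ supported in $m^3S$ that violates the no-decrement hypothesis unless $k=0$ — all of this is exactly what the paper does. Your accounting of the three factors of $m$ in $\eta\sim m^{-3}$ is slightly off (the Bukh-type dilate estimates contribute only a $\log m$ factor, which is dominated; the remaining $m$ comes from bounding $\sum_i\dist{X_i}{X_i}$ by $O(m\multidist{X_I})$ via \cref{lem:ruzsa-multi}(ii), and from $\multidist{(U_w)_I}\le m\dist{U_w}{U_w}$), but that heuristic does not affect the structure of the argument.
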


The proof of this proposition occupies the bulk of the paper.

We will also need the following result concerning very small values of the multidistance, which forms the base case of the induction.

\begin{proposition}\label{multi-vsmall}
Let $G$ be a finite abelian group. Suppose that $I$ is an indexing set of size $m \geq 2$. Suppose that $X_{I}$ is a tuple of $G$-valued random variables with $\multidist{X_{I} } < c_0$ for a sufficiently small absolute constant $c_0 >0$. Then there is some subgroup $H \leq G$ such that $\sum_{i \in I} \dist{X_i}{U_H} \ll m \multidist{X_{I}} $.
Moreover, if all the $X_i$ take values in some symmetric set $S \subseteq G$ containing the origin, then we can take $H \subseteq 6S$.
\end{proposition}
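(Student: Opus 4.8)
The plan is to reduce \cref{multi-vsmall} to the case $m=2$, for which it is a stability assertion: since $\multidist{\{X,Y\}}=\ent{\tilde X+\tilde Y}-\tfrac12\ent X-\tfrac12\ent Y=\dist{X}{-Y}$, the $m=2$ case says that if $\dist{X}{-Y}$ lies below an absolute constant then $X,Y$ are close in Ruzsa distance to a common $U_H$. Everything apart from this stability assertion is entropic bookkeeping. For the reduction, write $W\coloneqq\sum_{i\in I}\tilde X_i$ with the $\tilde X_i$ independent. Adjoining an independent summand never decreases entropy (\cref{ruzsa-calculus}\ref{rc-ii}), so $\ent W\ge\ent{X_i}$ for each $i$; hence $\sum_{i\in I}(\ent W-\ent{X_i})=m\,\multidist{X_I}$ is a sum of non-negative terms and each $\ent W-\ent{X_i}\le m\,\multidist{X_I}$. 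The same monotonicity gives $\ent{\tilde X_i+\tilde X_j}\le\ent W$ for $i\ne j$, so
\[ \dist{X_i}{-X_j}=\ent{\tilde X_i+\tilde X_j}-\tfrac12\ent{X_i}-\tfrac12\ent{X_j}\le\tfrac12(\ent W-\ent{X_i})+\tfrac12(\ent W-\ent{X_j})\le m\,\multidist{X_I} \]
for all $i\ne j$. Summing over ordered pairs, $\sum_{i\ne j}\dist{X_i}{-X_j}\le m(m-1)\multidist{X_I}$, so we may fix $i_0$ with $\sum_{j\ne i_0}\dist{X_{i_0}}{-X_j}\le(m-1)\multidist{X_I}$ and then $j_0\ne i_0$ with $\dist{X_{i_0}}{-X_{j_0}}\le\multidist{X_I}$.

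The base-case statement I would use is: there is an absolute $\delta_0>0$ such that if $\dist{X}{-Y}=\delta\le\delta_0$ then some finite subgroup $H\le G$ satisfies $\dist{X}{U_H},\dist{Y}{U_H}\ll\delta$, with $H\subseteq 6S$ whenever $X,Y$ are supported in a symmetric set $S\ni0$. When $\delta=0$ this is the classical fact that $\ent{\tilde X+\tilde Y}=\ent X=\ent Y$ forces $\tilde X+\tilde Y$ to be independent of each of $\tilde X,\tilde Y$, which in a finite abelian group means $X,Y$ are uniform on a common coset of a subgroup. For $\delta>0$ I would feed the near-equalities $\ent{\tilde X+\tilde Y}\approx\ent X\approx\ent Y$ (equivalently $\mutual{\tilde X+\tilde Y:\tilde X},\mutual{\tilde X+\tilde Y:\tilde Y}\ll\delta$) into Pinsker's inequality to see that the laws of $X$ and of $Y$ are almost invariant under translation by a large set of group elements, and then use a Kneser-type stability argument to locate the honest period subgroup $H$, followed by a refined entropy computation to recover the linear bound $\dist{X}{U_H},\dist{Y}{U_H}\ll\delta$, tracking supports to obtain $H\subseteq 6S$. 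Taking $c_0\le\delta_0$, the first paragraph applies this to $(X_{i_0},X_{j_0})$ and yields $H$ with $\dist{X_{i_0}}{U_H}\ll\multidist{X_I}$ (and $H\subseteq 6S$ in the $S$-valued case).

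It remains to bound the full sum. As $U_H$ and $-U_H$ have the same law, $\dist{-Z}{U_H}=\dist{Z}{U_H}$ and $\dist{X}{U_H}=\dist{X}{-U_H}$ for all $X,Z$; combining this with the Ruzsa triangle inequality (\cref{ruzsa-calculus}\ref{rc-i}) gives, for every $i$,
\[ \dist{X_i}{U_H}=\dist{X_i}{-U_H}\le\dist{X_i}{-X_{i_0}}+\dist{-X_{i_0}}{-U_H}=\dist{X_i}{-X_{i_0}}+\dist{X_{i_0}}{U_H}, \]
and also $\dist{X_{i_0}}{-X_{i_0}}\le\dist{X_{i_0}}{U_H}+\dist{U_H}{-X_{i_0}}=2\dist{X_{i_0}}{U_H}\ll\multidist{X_I}$. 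Summing over $i\in I$, using $\dist{X_i}{-X_{i_0}}=\dist{X_{i_0}}{-X_i}$ and the choice of $i_0$,
\[ \sum_{i\in I}\dist{X_i}{U_H}\le\sum_{j\ne i_0}\dist{X_{i_0}}{-X_j}+\dist{X_{i_0}}{-X_{i_0}}+m\,\dist{X_{i_0}}{U_H}\ll m\,\multidist{X_I}, \]
which is the claim, with $H\subseteq 6S$ in the $S$-valued case. The one real obstacle is the base-case statement: one needs the $\delta>0$ sharpening of the ``$100\%$'' structure theorem with \emph{linear} dependence on $\delta$ and with explicit support control for $H$, which over an arbitrary finite abelian group is a genuine Kneser-stability argument rather than the soft $\delta=0$ fact (the entropic Balog--Szemer\'edi--Gowers machinery used elsewhere in the paper is not needed here, since $\delta$ is already tiny).
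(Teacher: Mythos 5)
Your overall structure coincides with the paper's: reduce to a pair $(X_{i_0},X_{j_0})$ with $\dist{X_{i_0}}{-X_{j_0}}\le\multidist{X_I}$ (this averaging step is \cref{lem:ruzsa-multi}(i), which you re-derive from scratch), apply a stability theorem to get a subgroup $H$ with $\dist{X_{i_0}}{U_H}\ll\multidist{X_I}$, then sum up via the triangle inequality and $\dist{-Z}{U_H}=\dist{Z}{U_H}$. Your final bookkeeping is slightly less economical than the paper's (you route through the extra term $\dist{X_{i_0}}{-X_{i_0}}$, whereas one only needs $\dist{X_k}{U_H}\le\dist{X_k}{-X_{i_0}}+\dist{X_{i_0}}{U_H}$ for $k\ne i_0$), but the $\ll m\,\multidist{X_I}$ conclusion is the same. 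The one thing you flag as ``the real obstacle'' — a stability version of the $100\%$ theorem with linear dependence on $\delta=\dist{X}{-Y}$ and support control — is exactly~\cite[Theorem~1.3]{gmt}, which the paper invokes directly (giving $\dist{X_{i_0}}{U_H}\le 12\eps$), and the containment $H\subseteq 6S$ comes from inspecting the proof of that theorem: there $H=S'-S'$ with $S'$ consisting of points $y$ for which $D_{\mathrm{KL}}(y-X_{j_0}\|X_{i_0}-X_{j_0})$ is finite, forcing $S'\subseteq 3S$. So there is no genuine gap once you know this can be cited; your sketched Pinsker/Kneser route to the base case is plausible but unnecessary and, as written, unsubstantiated — in particular the claim that near-equality in the entropy inequalities yields approximate translation-invariance of the laws via Pinsker, and that a Kneser-type stability argument then produces $H$ with the \emph{linear} error bound and the support containment, is exactly the nontrivial content of~\cite[Theorem~1.3]{gmt} and would need to be proved in full rather than asserted.
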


This is a relatively straightforward consequence of~\cite[Theorem 1.3]{gmt}; for the details, see \cref{ruzsa-multid-sec}.

To conclude this outline, we show how our main result follows from \cref{thm-main-multi-dec} and \cref{multi-vsmall}.

\begin{proof}[Proof of \cref{thm-main-multi} assuming \cref{thm-main-multi-dec,multi-vsmall}]
  We apply \cref{thm-main-multi-dec} iteratively, obtaining for each $t \geq 0$ a tuple of random variables $X_{I}^{(t)}$ supported in $m^{3t} S$ with $X_I^{(0)} = X_I$ (that is, $X_i^{(0)} = X_i$ for $i \in I$), and
\begin{equation}\label{multidist-dec-t}
 \bigmultidist{ X^{(t+1)}_{I} } \leq (1 - \eta) \bigmultidist{ X^{(t)}_{I} } - \eta \sum_{i \in I} \bigdist{X^{(t)}_i}{X^{(t+1)}_i},
\end{equation}
which in particular implies that $\bigmultidist{ X^{(t+1)}_{I} } \leq (1 - \eta)  \bigmultidist{ X^{(t)}_{I} }$. Set $k := \multidist{X_I}$. By an easy induction it therefore follows that
\begin{equation}\label{dec-t}  \bigmultidist{ X^{(t)}_{I} } \leq (1 - \eta)^t k. \end{equation}
We apply this iteration until $t$ reaches the value
\begin{equation}\label{eta-s}
  s \coloneqq \lfloor C m^3  \log(2+k) \rfloor,
\end{equation}
where $C$ is a sufficiently large absolute constant.
By~\eqref{dec-t}, we will have $\bigmultidist{X^{(s)}_I }< c_0$ for $C$ large enough, where $c_0$ is the constant in \cref{multi-vsmall}.
From that proposition, we see that there is some subgroup $H \leq G$ such that $\sum_{i \in I} \bigdist{X^{(s)}_i}{U_H} \ll m \bigmultidist{X^{(s)}_I }$.
Moreover, we can take $H \subseteq \ell S$ for some $\ell \leq 6m^{3s} \ll (2+k)^{O(Cm^3 \log m)}$.

From several applications of the triangle inequality (that is, \cref{ruzsa-calculus}\ref{rc-i}) and~\eqref{multidist-dec-t} we now obtain
\begin{align*}
  \sum_{i \in I} \dist{X_i}{U_H}
  & \leq \sum_{i \in I} \bigdist{X_i^{(s)}}{U_H} + \sum_{t = 0}^{s-1}\sum_{i \in I} \bigdist{X_i^{(t)}}{X_i^{(t+1)}} \\
  & \ll m \multidist{X_I^{(s)}} +   \sum_{t = 0}^{s-1}\frac{1}{\eta} \Bigl(\bigmultidist{X_I^{(t+1)}} - (1 - \eta) \multidist{X_I^{(t)}} \Bigr) \\
  & \ll (m + \eta^{-1}) \bigmultidist{X_I^{(s)}} + \sum_{t = 1}^{s-1} \bigmultidist{X_I^{(t)}}.
\end{align*}
By~\eqref{dec-t} we deduce
\[
  \sum_{i \in I} \dist{X_i}{U_H}
\ll \big((m + \eta^{-1})(1 - \eta)^s + \eta^{-1} \big) k \ll m^3 k.
\]
This concludes the proof.
\end{proof}

\begin{remark}%
  \label{rem:different}
  The argument here is a little different from (but on some level equivalent to) that in~\cite{ggmt}, where a compactness argument was used. We hope that the reader will find it instructive to see the different arguments. The more hands-on iterative argument here allows one to retain some control of $H$ in terms of the support of the $X_i$, though at the cost of a slight degradation of the absolute constants (but not of the basic form of the dependence on $m$).
\end{remark}

The tasks for the remainder of the paper, then, are to prove \cref{thm-main-multi-dec} and \cref{multi-vsmall}, and to deduce \cref{mainthm} from \cref{thm-main-multi}. The second and third of these tasks are relatively straightforward and will be dealt with in the next section, leaving only the proof of \cref{thm-main-multi-dec} for the remaining sections of the paper.

\section{Relating Ruzsa distance and multidistance}\label{ruzsa-multid-sec}

We develop some basic properties of multidistance (\cref{multi-def}), starting by relating this notion to the more standard notion of entropic Ruzsa distance.

\begin{lemma}%
  \label{lem:ruzsa-multi}
  Let $G$ be an abelian group, let $I$ be an indexing set of size $m \ge 2$, and let $X_{I}$ be a tuple of $G$-valued random variables. Then
  \begin{enumerate}[label=\textup{(\roman*)}]
    \item $\sum_{\substack{j,k \in I \\ j \neq k}} \dist{X_j}{-X_k} \leq m(m-1) \multidist{X_I}$;
    \item $\sum_{j \in I} \dist{X_j}{X_j} \leq 2 m \multidist{X_I}$;
    \item if $(X_i)_{i \in I}$ all have the same distribution, $\multidist{X_I} \leq m \dist{X_j}{X_j}$ for any $j \in I$.
  \end{enumerate}
\end{lemma}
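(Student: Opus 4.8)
The plan is to prove all three parts by unfolding the definitions of $\multidist{\cdot}$ and $\dist{\cdot}{\cdot}$ into entropies of sums of independent variables and then invoking the entropic Ruzsa calculus. I will take the independent copies $\tilde X_i$ of \cref{multi-def} to form one globally independent family. The basic inputs are: the Ruzsa triangle inequality and symmetry of $\dist{\cdot}{\cdot}$ (\cref{ruzsa-calculus}); the monotonicity $\ent{\sum_{i\in I}\tilde X_i}\ge\ent{\sum_{i\in J}\tilde X_i}$ for non-empty $J\subseteq I$, which is \cref{ruzsa-calculus}\ref{rc-ii} applied to the independent pair $\bigl(\sum_{i\in J}\tilde X_i,\ \sum_{i\in I\setminus J}\tilde X_i\bigr)$; and the identity $\dist{X}{-Y}=\ent{\tilde X+\tilde Y}-\tfrac12\ent X-\tfrac12\ent Y$ for independent $\tilde X,\tilde Y$ (the $|I|=2$ case of \cref{multi-def}, together with the remark following it).

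Part (i) is then immediate: summing $\dist{X_j}{-X_k}=\ent{\tilde X_j+\tilde X_k}-\tfrac12\ent{X_j}-\tfrac12\ent{X_k}$ over the $m(m-1)$ ordered pairs $j\neq k$, the entropy terms contribute $-(m-1)\sum_{i\in I}\ent{X_i}$, while monotonicity gives $\ent{\tilde X_j+\tilde X_k}\le\ent{\sum_{i\in I}\tilde X_i}$ for each pair, so $\sum_{j\neq k}\ent{\tilde X_j+\tilde X_k}\le m(m-1)\ent{\sum_{i\in I}\tilde X_i}$; combining and factoring out $m(m-1)$ yields the claim.

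For part (ii), fix $i$ and put $S_{\hat\imath}:=\sum_{k\in I\setminus\{i\}}\tilde X_k$, which is independent of $\tilde X_i$; then $\dist{X_i}{-S_{\hat\imath}}=\ent{\tilde X_i+S_{\hat\imath}}-\tfrac12\ent{X_i}-\tfrac12\ent{S_{\hat\imath}}=\ent{\sum_{k\in I}\tilde X_k}-\tfrac12\ent{X_i}-\tfrac12\ent{S_{\hat\imath}}$, and the triangle inequality together with symmetry gives $\dist{X_i}{X_i}\le 2\dist{X_i}{-S_{\hat\imath}}$. Summing over $i$ gives $\sum_{i\in I}\dist{X_i}{X_i}\le 2m\,\ent{\sum_{k\in I}\tilde X_k}-\sum_{i\in I}\ent{X_i}-\sum_{i\in I}\ent{S_{\hat\imath}}$; since $\ent{S_{\hat\imath}}\ge\ent{X_k}$ for every $k\neq i$ we have $\ent{S_{\hat\imath}}\ge\tfrac1{m-1}\sum_{k\neq i}\ent{X_k}$, whence $\sum_i\ent{S_{\hat\imath}}\ge\sum_i\ent{X_i}$ by double counting, and substituting collapses the right-hand side to exactly $2m\,\multidist{X_I}$.

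Part (iii), where $\multidist{X_I}=\ent{\tilde X_1+\dots+\tilde X_m}-\ent X$, is the one I expect to be the obstacle. A bound of the right shape is easy: introducing a fresh copy $\tilde X_0\sim X$, monotonicity gives $\tfrac12\,\multidist{X_I}\le\dist{\tilde X_0}{\sum_{i=1}^m\tilde X_i}$, and the Kaimanovich--Vershik--Madiman inequality \cref{ruzsa-calculus}\ref{rc-x} bounds the right side by $2\sum_{i=1}^m\dist{X}{X}=2m\,\dist{X}{X}$, so $\multidist{X_I}\le 4m\,\dist{X}{X}$. Obtaining the \emph{sharp} constant $m$ is the delicate point. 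My first attempt would be the telescoping identity $\multidist{X_I}=\sum_{k=2}^m\mutual{\tilde X_k:S_k}$ with $S_k:=\tilde X_1+\dots+\tilde X_k$, valid since $\mutual{\tilde X_k:S_k}=\ent{S_k}-\ent{S_{k-1}}$ when $S_{k-1}\perp\tilde X_k$, reducing matters to showing $\mutual{\tilde X_k:S_k}\le\dist{X}{X}$ for each $k$. The obvious estimate — data processing, using that $S_k$ is a noisy version of $\tilde X_k+\tilde X_{k-1}$ — only yields $\mutual{\tilde X_k:S_k}\le\dist{X}{-X}$, and $\dist{X}{-X}$ is comparable to $\dist{X}{X}$ merely up to an absolute constant (via \eqref{crude-bukh} with $a=-1$), so this route loses a factor. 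The hard part is therefore to replace it by a direct submodularity estimate on $\mutual{\tilde X_k:S_k}$ that produces a genuine $\dist{X}{X}$ rather than a $\dist{X}{-X}$, exploiting that all $m$ variables coincide.
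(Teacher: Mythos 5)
Parts (i) and (ii) are correct. Your proof of (i) is essentially the paper's. For (ii) the paper is more economical: it simply applies $\dist{X_j}{X_j}\le 2\dist{X_j}{-X_k}$ to every term in the sum from (i) and divides by $m-1$; your detour through $S_{\hat\imath}$ and the double-counting inequality $\sum_i\ent{S_{\hat\imath}}\ge\sum_i\ent{X_i}$ is sound but longer for the same constant.

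Part (iii) has a genuine gap. Your easy route only gives the constant $4m$, and your telescoping plan is already stuck at its first term, since $\mutual{\tilde X_2:S_2}=\ent{\tilde X_1+\tilde X_2}-\ent X=\dist{X}{-X}$ exactly, which need not be bounded by $\dist{X}{X}$ in a general abelian group. The missing idea in the paper is simple and requires no new submodularity estimate: use the \emph{entropy} form~\eqref{kv-1} of the Kaimanovich--Vershik--Madiman inequality rather than the Ruzsa-distance form~\eqref{ruzsa-2}, and take the auxiliary variable to be an independent copy $X^-$ of $-X_j$ rather than of $X_j$. Applying~\eqref{kv-1} with $X^-$ in the role of $X$ and with $Y_1,\dots,Y_m$ the tuple $\tilde X_I$ gives
\[
\Bigent{X^-+\tsum_{i\in I}\tilde X_i}-\ent{X^-}\le\sum_{i\in I}\bigl(\ent{X^-+\tilde X_i}-\ent{X^-}\bigr).
\]
Since $X^-$ is distributed as $-X_j$ and is independent of $\tilde X_i\sim X_j$, each summand on the right equals $\ent{\tilde X_i+X^-}-\ent{X_j}=\dist{X_j}{X_j}$ exactly, with no lost factor of $2$. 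Combining with $\ent{X^-+\sum_{i\in I}\tilde X_i}\ge\ent{\sum_{i\in I}\tilde X_i}$ from \cref{ruzsa-calculus}\ref{rc-ii} and with $\ent{X^-}=\ent{X_j}$ yields $\multidist{X_I}=\ent{\sum_{i\in I}\tilde X_i}-\ent{X_j}\le m\dist{X_j}{X_j}$. The factor-of-$2$ waste in your KVM attempt comes precisely from invoking~\eqref{ruzsa-2}: the $\tfrac12\ent{\cdot}$ normalisations in the definition of Ruzsa distance give nothing back here, whereas working with raw entropies and the sign flip avoids the loss entirely.
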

\begin{proof}%
Without loss of generality we may take the $X_i$ to be jointly independent.
From \cref{ruzsa-calculus}\ref{rc-ii}, we see that for any distinct $j,k \in I$, we have
\[
  \ent{X_j+X_k} \leq \bigent{ \tsum_{i \in I} X_i } ,
\]
and hence by~\eqref{ruzsa-def} we have
\[
  \dist{X_j}{-X_k} \leq \bigent{\tsum_{i \in I} X_i} - \tfrac{1}{2} \ent{X_j} - \tfrac{1}{2} \ent{X_k} .
\]
Summing this over all pairs $(j,k)$, $j \neq k$, gives (i).

From the triangle inequality (\cref{ruzsa-calculus}\ref{rc-i}) we have
\[  \dist{X_j}{X_j} \leq 2 \dist{X_j}{-X_k},  \]
and applying this to every summand in (i) gives (ii) (after dividing by $m-1$).

For (iii), we apply \cref{ruzsa-calculus}\ref{rc-x} with $X$ a further independent copy of $-X_j$, and $Y_1,\dots,Y_n$ a relabeling of $(X_i)_{i \in I}$, to get
\[
  \bigent{-X + \tsum_{i \in I} X_i} - \ent{X} \leq m \dist{X}{X}.
\]
Since
\[
  \bigent{-X + \tsum_{i \in I} X_i} \geq \bigent{\tsum_{i \in I} X_i}
\]
by \cref{ruzsa-calculus}\ref{rc-ii}, this implies (iii).
\end{proof}

We are now in a position to give the reduction of \cref{main-entropy} to  \cref{thm-main-multi}.

\begin{proof}[Proof of \cref{main-entropy} assuming \cref{thm-main-multi}]
  First, we claim that it suffices to establish \cref{main-entropy} in the case $X=Y$ (at the expense of worsening the implicit constants by a little more than a factor of 2).
  Indeed, for general $X,Y$, by the triangle inequality we have $\dist{X}{X} \leq 2 \dist{X}{Y}$, and so (assuming the case $X = Y$ of \cref{main-entropy}) there is some $H$ with
  \[ \dist{X}{U_H} \ll m^3 \dist{X}{Y}.\]
  We then also have
  \[ \dist{Y}{U_H} \leq \dist{X}{U_H} + \dist{X}{Y} \ll m^3 \dist{X}{Y}.\]
  This proves the claim.

  Suppose then that $X = Y$.
  Take $I = \{1,2,\dots, m\}$, and let $X_i = X$ for all $i \in I$.  From \cref{lem:ruzsa-multi}(iii) we have
  \[
    \multidist{ X_{I} } \leq m \dist{X}{X},
  \]
  and hence by \cref{thm-main-multi} we can find a subspace $H$ of $G$ such that $H \subseteq \ell S$ for some
  \[
    \ell \ll (2 + \multidist{ X_I })^{O(m^3 \log m)} \leq (2 + m\dist{X}{X})^{O(m^3 \log m)}
  \]
  and such that
  \[
    \sum_{i \in I} \dist{X}{U_H} \ll m^3 \multidist{X_I} \leq m^4 \dist{X}{X}.
  \]
  Since the left-hand side is $m \dist{X}{U_H}$, the theorem follows.
\end{proof}

We are also in a position to establish \cref{multi-vsmall}.

\begin{proof}[Proof of \cref{multi-vsmall}]
  Write $\eps := \multidist{X_I}$. Then by \cref{lem:ruzsa-multi}(i) and averaging, we can find $i \in I$ such that
\begin{equation}\label{kim}
  \sum_{k \neq i} \dist{X_i}{-X_k} \leq (m-1)\eps,
\end{equation}
and by further averaging we can find $j \neq i$ such that
\[  \dist{X_i}{-X_j} \leq \eps. \]
For $\eps$ small enough, we may apply~\cite[Theorem 1.3]{gmt} to conclude that there exists a finite subgroup $H$ of $G$ with $\dist{X_i}{U_{H}} \leq 12 \eps$.
An inspection of the proof of that theorem (in~\cite[\S 5]{gmt}) also reveals that $H$ is of the form $H = S'-S'$, where all the elements $y$ of $S'$ have Kullback--Leibler divergences\footnote{We refer the reader to~\cite{gmt} for a definition of Kullback--Leibler divergence.} $D_{\mathrm{KL}}(y-X_j||X_i-X_j)$ finite; this implies that $S' \subseteq 3S$ and hence that $H \subseteq 6S$.
Since $\dist{-Y}{U_H} = \dist{Y}{-U_H} = \dist{Y}{U_H}$ for any random variable $Y$,~\eqref{kim} and the triangle inequality (\cref{ruzsa-calculus}\ref{rc-i}) then give us
\[
  \sum_{k \in I} \dist{X_k}{U_H} \leq (13m - 1)\eps,
\]
as required.
\end{proof}

\section{The multidistance chain rule}\label{chain-sec}

In this section we establish a key inequality for the behaviour of multidistance under homomorphisms, together with some consequences of it. The key lemma, \cref{multidist-chain-rule}, is a ``chain rule'' for multidistance, analogous to the chain rule~\eqref{chain-rule} for Shannon entropy, as well as the ``fibring lemma'' for entropic Ruzsa distance in~\cite[Proposition 4.1]{ggmt} and~\cite[Proposition 1.4]{gmt}.

We have already remarked on our convention of writing $X_I = (X_i)_{i \in I}$ for a tuple of random variables indexed by some finite set $I$. If these random variables are $G$-valued, it is convenient to introduce two further notational conventions.  First, if $\pi \colon G \rightarrow H$ is a homomorphism, we write $\pi(X_I) \coloneqq (\pi(X_i))_{i \in I}$. Second, if $Y_I = (Y_i)_{i \in I}$ is another tuple of $G$-valued random variables, we write $X_I + Y_I \coloneqq (X_i + Y_i)_{i \in I}$.

 We will also need to introduce a notion of \emph{conditional multidistance}.  If $X_I$ and $Y_I$ are tuples of random variables, with the $X_i$ being $G$-valued, then we define
\begin{equation}\label{multi-def-cond}
\multidist{ X_{I} | Y_{I} }  \coloneqq \Bigent{ \tsum_{i \in I} \tilde X_i \big| (\tilde Y_j)_{j \in I} } - \frac{1}{|I|} \sum_{i \in I} \ent{\tilde X_i | \tilde Y_i}
  \end{equation}
where $(\tilde X_i,\tilde Y_i)$, $i \in I$ are independent copies of $(X_i,Y_i), i \in I$ (but note here that we do \emph{not} assume $X_i$ are independent of $Y_i$, or $\tilde X_i$ independent of $\tilde Y_i$).
Equivalently, one has
\begin{equation}\label{multi-def-cond-alt}
  \multidist{ X_{I} | Y_{I} } = \sum_{(y_i)_{i \in I}} \biggl(\prod_{i \in I} p_{Y_i}(y_i)\biggr) \bigmultidist{ (X_i \,|\, Y_i \mathop{=}y_i)_{i \in I} }
\end{equation}
where each $y_i$ ranges over the support of $p_{Y_i}$ for $i \in I$.

Here is the first key result of this section, the chain rule for multidistance.

\begin{lemma}\label{multidist-chain-rule}  Let $\pi \colon G \to H$ be a homomorphism of abelian groups and let $X_{I}$ be a tuple of jointly independent $G$-valued random variables.  Then $\multidist{ X_{I} }$ is equal to
\begin{equation}
    \bigmultidist{ X_{I} | \pi(X_{I}) }  + \bigmultidist{ \pi(X_{I}) }  + \Bigmutual{ \tsum_{i \in I} X_i  : \pi(X_{I}) \; \big| \; \pi\bigl(\tsum_{i \in I} X_i\bigr) }.
  \label{chain-eq}
\end{equation}
\end{lemma}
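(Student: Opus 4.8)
The plan is to unfold every term in \eqref{chain-eq} into Shannon entropies and verify the identity by a direct but careful bookkeeping of conditional entropies, exploiting the joint independence of the $X_i$ throughout. Write $S \coloneqq \sum_{i \in I} X_i$ and $\pi(S) = \sum_{i \in I} \pi(X_i)$ (using that $\pi$ is a homomorphism). Since the $X_i$ are jointly independent, $\bigent{S}$ and $\bigent{\pi(X_I)} = \sum_i \ent{\pi(X_i)}$ and $\ent{X_i} = \ent{X_i | \pi(X_i)} + \ent{\pi(X_i)}$ (the last by the chain rule \eqref{chain-rule}). So the left-hand side $\multidist{X_I} = \ent{S} - \tfrac{1}{m}\sum_i \ent{X_i}$ splits as $\ent{S} - \tfrac1m \sum_i \ent{X_i|\pi(X_i)} - \tfrac1m\sum_i \ent{\pi(X_i)}$.

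Next I would expand the right-hand side. For the conditional multidistance, use \eqref{multi-def-cond} together with joint independence: $\bigmultidist{X_I | \pi(X_I)} = \bigent{S \,|\, \pi(X_I)} - \tfrac1m \sum_i \ent{X_i | \pi(X_i)}$, where I need that conditioning the sum $S$ on the whole tuple $\pi(X_I)$ is legitimate (the $(X_i,\pi(X_i))$ are independent across $i$, so this matches Definition via \eqref{multi-def-cond-alt}). The second term is just $\bigmultidist{\pi(X_I)} = \ent{\pi(S)} - \tfrac1m\sum_i \ent{\pi(X_i)}$ by joint independence of the $\pi(X_i)$. The third term, by \eqref{cond-form-mutual}, equals $\ent{S | \pi(S)} + \ent{\pi(X_I) | \pi(S)} - \ent{S, \pi(X_I) | \pi(S)}$; since $\pi(S)$ is a deterministic function of $\pi(X_I)$, we have $\ent{S,\pi(X_I)|\pi(S)} = \ent{S | \pi(X_I)} + \ent{\pi(X_I) | \pi(S)}$ by the conditional chain rule \eqref{chain-rule'}, so the third term collapses to $\ent{S|\pi(S)} - \ent{S|\pi(X_I)}$.

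Adding the three pieces on the right: the $-\tfrac1m\sum_i\ent{X_i|\pi(X_i)}$ from the first term and the $-\tfrac1m\sum_i\ent{\pi(X_i)}$ from the second term survive; the $\ent{\pi(S)}$ from the second term combines with $\ent{S|\pi(S)} = \ent{S} - \ent{\pi(S)}$ from the third term to give $\ent{S}$; and the $+\bigent{S|\pi(X_I)}$ from the first term cancels the $-\ent{S|\pi(X_I)}$ from the third term. What remains is exactly $\ent{S} - \tfrac1m\sum_i\ent{X_i|\pi(X_i)} - \tfrac1m\sum_i\ent{\pi(X_i)}$, matching the left-hand side.

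The main obstacle is not any single hard inequality — there is none, the statement is an identity — but rather the bookkeeping: making sure that every place where I replace $\ent{X_i}$ by $\ent{X_i|\pi(X_i)} + \ent{\pi(X_i)}$, or where I use $\ent{S|\pi(S)} = \ent{S} - \ent{\pi(S)}$, is justified, and in particular that the conditional multidistance $\bigmultidist{X_I|\pi(X_I)}$ as defined in \eqref{multi-def-cond}–\eqref{multi-def-cond-alt} really does equal $\bigent{\sum_i X_i | \pi(X_I)} - \tfrac1m\sum_i\ent{X_i|\pi(X_i)}$ when the $X_i$ are jointly independent (one must check that $\ent{\sum_i \tilde X_i | (\tilde Y_j)_j}$ with $\tilde Y_j = \pi(\tilde X_j)$ coincides with $\ent{S|\pi(X_I)}$, which is immediate here since the joint law of $(X_i)_i$ already has independent coordinates so the "tilde" copies add nothing). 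I would present the computation as a short chain of displayed equalities, each annotated with the inequality or chain rule being invoked, so the reader can check the cancellation line by line.
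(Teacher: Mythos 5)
Your proof is correct and follows essentially the same route as the paper: both arguments come down to expanding the conditional mutual information $\mutual{\sum_i X_i : \pi(X_I) \mid \pi(\sum_i X_i)}$ into Shannon entropies, using the facts that $\pi(\sum_i X_i)$ is determined both by $\sum_i X_i$ and by $\pi(X_I)$, applying the chain rule $\ent{X_i} = \ent{X_i | \pi(X_i)} + \ent{\pi(X_i)}$ for each $i$, and cancelling. The paper organizes this by first isolating the identity $\ent{\sum_i X_i} = \ent{\sum_i X_i | \pi(X_I)} + \ent{\pi(\sum_i X_i)} + \mutual{\cdots}$ and then subtracting the averaged per-coordinate chain rule, whereas you expand each of the three right-hand terms separately and collapse them back to the left-hand side; but this is a presentational choice, not a different method.
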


\begin{proof} For notational brevity during this proof, write $\xsum:= \sum_{i \in I} X_i$.

  Expanding out the definition~\eqref{cond-form-mutual} of $\mutual{\xsum:\pi(X_I)|\pi(\xsum)}$ and using the fact that $\pi(\xsum)$ is determined both by $\xsum$ and by $\pi(X_I)$, we obtain
\begin{equation*}
 \mutual{\xsum:\pi(X_I)|\pi(\xsum)} = \ent{\xsum}+\ent{\pi(X_I)}-\ent{\xsum,\pi(X_I)}-\ent{\pi(\xsum)},
\end{equation*}
and by the chain rule~\eqref{chain-rule} the right-hand side is equal to
\begin{equation*}
\ent{\xsum}-\ent{\xsum|\pi(X_I)}-\ent{\pi(\xsum)}.
\end{equation*}
Therefore,
\begin{equation}\label{chain-1}
\ent{\xsum}=\ent{\xsum|\pi(X_I)}+\ent{\pi(\xsum)}+\mutual{\xsum:\pi(X_I)|\pi(\xsum)}. \end{equation}
From a further application of the chain rule~\eqref{chain-rule} we have
\begin{equation}\label{chain-2}
  \ent{X_i} = \ent{X_i \, | \, \pi(X_i) } + \ent{\pi(X_i)}
\end{equation}
for all $i \in I$.  Averaging~\eqref{chain-2} in $i$ and subtracting this from~\eqref{chain-1}, we obtain \cref{multidist-chain-rule} as a consequence of the definition of multidistance (\cref{multi-def}).
\end{proof}

We will need to iterate the multidistance chain rule, so it is convenient to observe a conditional version of this rule, as follows.

\begin{lemma}\label{multidist-chain-rule-cond}
  Let $\pi \colon G \to H$ be a homomorphism of abelian groups.
  Let $I$ be a finite index set and let $X_{I}$ be a tuple of $G$-valued random variables.
  Let $Y_{I}$ be another tuple of random variables \uppar{not necessarily $G$-valued}.
  Suppose that the pairs $(X_i, Y_i)$ are jointly independent of one another \uppar{but $X_i$ need not be independent of $Y_i$}.
  Then
  \begin{align}\nonumber
      \multidist{ X_{I} | Y_{I} } &=  \bigmultidist{ X_{I} \,|\, \pi(X_{I}), Y_{I}} + \bigmultidist{  \pi(X_{I}) \,|\, Y_{I}} \\
       &\quad\qquad + \Bigmutual{ \tsum_{i\in I} X_i : \pi(X_{I}) \; \big| \;  \pi\bigl(\tsum_{i\in I} X_i \bigr), Y_{I} }.\label{chain-eq-cond}
  \end{align}
\end{lemma}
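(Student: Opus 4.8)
The plan is to derive the conditional chain rule directly from the unconditional one, \cref{multidist-chain-rule}, by conditioning on the values of the tuple $Y_I$. Concretely, I would fix a tuple $(y_i)_{i \in I}$ with each $y_i$ in the support of $p_{Y_i}$ and condition every random variable in sight on the event that $Y_i = y_i$ for all $i \in I$. The joint independence hypothesis on the pairs $(X_i, Y_i)$ guarantees two things: first, that the conditioned variables $(X_i \mid Y_i = y_i)$, $i \in I$, are again jointly independent (their conditional joint law factorises as $\prod_{i} p_{X_i \mid Y_i = y_i}$); and second, that $(\sum_{i \in I} X_i \mid Y_I = (y_i)_{i})$ is distributed as the sum of these independent conditioned variables, and likewise for $\pi(\sum_{i \in I} X_i)$. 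Hence \cref{multidist-chain-rule} applies verbatim to the tuple $(X_i \mid Y_i = y_i)_{i \in I}$ with the homomorphism $\pi$, producing a pointwise (in $(y_i)_i$) identity whose three terms are the conditional multidistance of $(X_i \mid Y_i = y_i)_i$ given $(\pi(X_i) \mid Y_i = y_i)_i$, the multidistance of $(\pi(X_i) \mid Y_i = y_i)_i$, and the conditional mutual information of $\sum_{i \in I} X_i$ with $\pi(X_I)$ given $\pi(\sum_{i \in I} X_i)$ --- all computed inside the conditioned probability space.

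The second step is to multiply this pointwise identity by $\prod_{i \in I} p_{Y_i}(y_i)$ and sum over all tuples $(y_i)_{i \in I}$. By the equivalent formula~\eqref{multi-def-cond-alt} for conditional multidistance, the left-hand side becomes $\multidist{X_I | Y_I}$ and the middle term on the right becomes $\multidist{\pi(X_I) | Y_I}$. For the first term, the key observation is that the pairs $(\pi(X_i), Y_i)$ are again jointly independent (being images of the $(X_i,Y_i)$ under a fixed map), so the composite weight $\bigl(\prod_{i} p_{Y_i}(y_i)\bigr)\prod_{i} p_{\pi(X_i) \mid Y_i = y_i}(z_i)$ collapses to $\prod_{i} p_{(\pi(X_i), Y_i)}(z_i, y_i)$; feeding this into~\eqref{multi-def-cond-alt} applied with the conditioning tuple $(\pi(X_i), Y_i)_i$ identifies the averaged first term with $\multidist{X_I | \pi(X_I), Y_I}$. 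The same factorisation of probabilities handles the third term: averaging the conditional mutual information over the values of $Y_I$ --- which internally already averages over the values of $\pi(\sum_{i} X_i)$ --- is, by definition of conditional mutual information, precisely averaging over the joint values of $\bigl(\pi(\sum_{i} X_i), Y_I\bigr)$, i.e. $\mutual{\sum_{i \in I} X_i : \pi(X_I) \mid \pi(\sum_{i \in I} X_i), Y_I}$. Assembling the three identifications yields~\eqref{chain-eq-cond}.

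I do not expect any genuine obstacle here: the mathematical content is entirely the unconditional \cref{multidist-chain-rule}, together with the fact that all the conditional quantities in play (conditional multidistance and conditional mutual information) are by construction weighted averages of their unconditional counterparts over the conditioning values. The only point needing care is the bookkeeping for the first term --- checking that ``condition on $\pi(X_I)$, then average over $Y_I$'' reassembles into the single joint conditioning on $(\pi(X_I), Y_I)$ that appears on the right-hand side --- and this is exactly where the joint independence of the pairs $(\pi(X_i), Y_i)$, inherited from that of the $(X_i,Y_i)$, is used, to make the relevant probability weights factorise.
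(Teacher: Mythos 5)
Your proposal is correct and follows exactly the same route as the paper's (very terse) proof: apply the unconditional chain rule to the conditioned tuple $(X_i \mid Y_i = y_i)_{i \in I}$ and average against the weights $\prod_i p_{Y_i}(y_i)$. You have simply spelled out the bookkeeping — in particular, the factorisation of the joint law of $(\pi(X_I), Y_I)$ needed to recombine the first and third terms — that the paper leaves implicit.
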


Indeed, for each $y_i$ in the support of $p_{Y_i}$, we may apply \cref{multidist-chain-rule} with $X_i$ replaced by the conditioned random variable $(X_i|Y_i=y_i)$, and the claim~\eqref{chain-eq-cond} follows by averaging~\eqref{chain-eq} in the $y_i$ using the weights $p_{Y_i}$.

We can iterate the above lemma as follows.

\begin{lemma}\label{multidist-chain-rule-iter}  Let $m$ be a positive integer.
  Suppose one has a sequence
  \begin{equation}\label{g-seq}
    G_m \to G_{m-1} \to \dots \to G_1 \to G_0 = \{0\}
  \end{equation}
  of homomorphisms between abelian groups $G_0,\dots,G_m$, and for each $d=0,\dots,m$, let $\pi_d \colon G_m \to G_d$ be the homomorphism from $G_m$ to $G_d$ arising from this sequence by composition \uppar{so for instance $\pi_m$ is the identity homomorphism and $\pi_0$ is the zero homomorphism}.
  Let $I$ be a finite index set and let $X_{I} = (X_i)_{i \in I}$ be a jointly independent tuple of $G_m$-valued random variables.
  Then
  \begin{equation}
    \begin{split}
      \multidist{ X_{I} } &=  \sum_{d=1}^m \bigmultidist{ \pi_d(X_{I}) \,|\, \pi_{d-1}(X_{I})} \\
       &\quad + \sum_{d=1}^{m-1} \Bigmutual{ \tsum_i X_i : \pi_d(X_{I}) \; \big| \; \pi_d\big(\tsum_i X_i\big), \pi_{d-1}(X_{I}) }.
    \end{split}\label{chain-eq-cond'}
  \end{equation}
  In particular, since all the $\mutual{-}$ terms are nonnegative \uppar{see~\eqref{nonneg-cond}}, we have
  \begin{align}\nonumber
      \multidist{ X_{I} } \geq  & \sum_{d=1}^m \bigmultidist{ \pi_d(X_{I})|\pi_{d-1}(X_{I}) } \\
       & + \Bigmutual{ \tsum_i X_i : \pi_1(X_{I}) \; \big| \; \pi_1\bigl(\tsum_i X_i\bigr) }.\label{chain-eq-cond''}
  \end{align}
\end{lemma}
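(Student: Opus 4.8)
The plan is to obtain \eqref{chain-eq-cond'} by telescoping a single application of the conditional chain rule \cref{multidist-chain-rule-cond} along the tower \eqref{g-seq}, and then to deduce \eqref{chain-eq-cond''} for free from nonnegativity of conditional mutual information.

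First I would introduce the quantities $A_k \coloneqq \multidist{X_I \mid \pi_k(X_I)}$ for $k = 0,1,\dots,m$. Since $\pi_0$ is the zero homomorphism, $\pi_0(X_I)$ is a deterministic tuple and $A_0 = \multidist{X_I}$; since $\pi_m$ is the identity, conditioning on $\pi_m(X_I) = X_I$ makes every $X_i$ deterministic, so $A_m = 0$. Hence $\multidist{X_I} = \sum_{d=1}^m (A_{d-1} - A_d)$, and it suffices to evaluate each increment. To do this I would fix $d$ and apply \cref{multidist-chain-rule-cond} with the homomorphism taken to be $\pi_d \colon G_m \to G_d$ and with the auxiliary tuple $Y_i \coloneqq \pi_{d-1}(X_i)$; the independence hypothesis holds because the $X_i$ are jointly independent, so the pairs $(X_i, \pi_{d-1}(X_i))$ are too. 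The lemma then writes $A_{d-1}$ as the sum of $\multidist{X_I \mid \pi_d(X_I), \pi_{d-1}(X_I)}$, the term $\multidist{\pi_d(X_I) \mid \pi_{d-1}(X_I)}$, and a conditional mutual information $\mutual{\sum_{i\in I} X_i : \pi_d(X_I) \mid \pi_d(\sum_{i\in I} X_i), \pi_{d-1}(X_I)}$. Because $\pi_{d-1}$ factors through $\pi_d$ via the single arrow $G_d \to G_{d-1}$ of the tower, the tuple $\pi_{d-1}(X_I)$ is a deterministic function of $\pi_d(X_I)$, so the first of these three terms is just $A_d$; rearranging gives a closed formula for $A_{d-1} - A_d$. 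Summing over $d = 1,\dots,m$ then produces \eqref{chain-eq-cond'}, once one also checks that the $d=m$ coupling term vanishes — it equals $\mutual{\sum_{i\in I} X_i : X_I \mid \sum_{i\in I} X_i, \pi_{m-1}(X_I)}$ since $\pi_m = \id$, and a variable appearing in the conditioning has zero mutual information with anything. For \eqref{chain-eq-cond''} I would then discard every conditional mutual information term except the $d=1$ one (all are $\geq 0$ by \eqref{nonneg-cond}) and note that conditioning on the deterministic tuple $\pi_0(X_I)$ in that remaining term can be dropped.

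I do not anticipate a serious obstacle here: the argument is careful bookkeeping around a single lemma. The one point needing attention is the identity $\multidist{X_I \mid \pi_d(X_I), \pi_{d-1}(X_I)} = \multidist{X_I \mid \pi_d(X_I)}$, i.e. that adjoining the redundant conditioning variable $\pi_{d-1}(X_I)$ changes neither the conditional entropies nor the averaging weights in \eqref{multi-def-cond-alt}, together with the parallel fact that the $d=m$ term is forced to be zero. Both reduce to the elementary observation that conditioning on a deterministic function of data one is already conditioning on costs nothing.
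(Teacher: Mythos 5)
Your proof is correct and takes essentially the same approach as the paper: both iterate \cref{multidist-chain-rule-cond} along the tower \eqref{g-seq} with $Y_I = \pi_{d-1}(X_I)$ and $\pi = \pi_d$, using the fact that $\pi_d(X_I)$ determines $\pi_{d-1}(X_I)$ to collapse the conditional multidistance term, and then telescope. The only cosmetic difference is that you run the telescope all the way to $d=m$ (using $A_m = 0$ and the vanishing of the $d=m$ mutual information term), whereas the paper stops at $d=m-1$ and directly identifies the remaining term $\multidist{X_I \mid \pi_{m-1}(X_I)}$ with $\multidist{\pi_m(X_I) \mid \pi_{m-1}(X_I)}$; both routes are valid.
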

\begin{proof}
Indeed, from \cref{multidist-chain-rule-cond} (taking $Y_I = \pi_{d-1}(X_I)$ and $\pi = \pi_d$ there, and noting that $\pi_d(X_I)$ determines $Y_I$) we have
\begin{align*}
  \bigmultidist{ X_{I} \,|\, \pi_{d-1}(X_{I}) } &=  \bigmultidist{ X_{I} \,|\, \pi_d(X_{I}) } + \bigmultidist{  \pi_d(X_{I})\,|\,\pi_{d-1}(X_{I}) } \\
                                           &\quad + \Bigmutual{ \tsum_{i \in I} X_i : \pi_d(X_{I}) \; \big| \; \pi_d\bigl(\tsum_{i \in I} X_i\bigr), \pi_{d-1}(X_{I}) }
\end{align*}
for $d=1,\dots,m-1$. The claim follows by telescoping series, noting that $\multidist{X_I | \pi_0(X_I)} = \multidist{X_I}$ and that $\pi_m(X_I)=X_I$.
\end{proof}

In our application we will need the following special case of the above lemma.

\begin{corollary}\label{cor-multid} Let $G$ be an abelian group and let $m \geq 2$.  Suppose that $X_{i,j}$, $1 \leq i, j \leq m$, are independent $G$-valued random variables.
  Then
  \begin{align*}
    &\Bigmutual{ \bigl(\tsum_{i=1}^m X_{i,j}\bigr)_{j =1}^{m} : \bigl(\tsum_{j=1}^m X_{i,j}\bigr)_{i = 1}^m \; \big| \; \tsum_{i=1}^m \tsum_{j = 1}^m  X_{i,j} } \\
    &\quad \leq \sum_{j=1}^{m-1} \Bigl(\bigmultidist{ (X_{i, j})_{i = 1}^m} - \Bigmultidist{ (X_{i, j})_{i = 1}^m  \; \big| \; (X_{i,j} + \cdots + X_{i,m})_{i =1}^m  }\Bigr) \\ & \qquad\qquad\qquad\qquad +  \bigmultidist{ (X_{i,m})_{i=1}^m} - \bigmultidist{ \bigl(\tsum_{j=1}^m X_{i,j}\bigr)_{i=1}^m },
  \end{align*}
where all the multidistances here involve the indexing set $\{1,\dots, m\}$.
\end{corollary}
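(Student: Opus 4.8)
The plan is to obtain the claim as a special case of the iterated multidistance chain rule \cref{multidist-chain-rule-iter}, in the form \eqref{chain-eq-cond''}, applied not to the $m^2$ individual variables $X_{i,j}$ but to the $m$ ``rows'' $W_i \coloneqq (X_{i,1},\dots,X_{i,m}) \in G^m$, which form a jointly independent tuple of $G^m$-valued random variables (distinct rows involve disjoint collections of the $X_{i,j}$). For the sequence of homomorphisms I take $G_d \coloneqq G^d$ for $0 \le d \le m$, with the map $G_d \to G_{d-1}$ being $(b_1,\dots,b_d) \mapsto (b_1,\dots,b_{d-2},b_{d-1}+b_d)$ when $d \ge 2$ and the zero map when $d = 1$; the composite $\pi_d \colon G_m \to G_d$ is then $\pi_d(a_1,\dots,a_m) = (a_1,\dots,a_{d-1},\, a_d + a_{d+1} + \cdots + a_m)$, so that $\pi_m = \id$, $\pi_0 = 0$, and $\pi_1$ is the ``sum of all coordinates'' homomorphism.

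Write $\xsum \coloneqq \sum_{i=1}^m W_i$, whose $j$-th coordinate is $\sum_i X_{i,j}$; then $\pi_1(\xsum) = \sum_{i,j} X_{i,j}$ and $\pi_1(W_I) = (\sum_j X_{i,j})_{i=1}^m$, so the mutual-information term $\mutual{\xsum : \pi_1(W_I)\,|\,\pi_1(\xsum)}$ appearing in \eqref{chain-eq-cond''} is exactly the left-hand side of the corollary. It remains to identify the multidistance terms. Because all the $X_{i,j}$ are independent, $\ent{W_i} = \sum_j \ent{X_{i,j}}$ and $\ent{\xsum} = \sum_j \ent{\sum_i X_{i,j}}$ (the coordinates of $\xsum$ depending on pairwise disjoint sets of variables), so $\multidist{W_I} = \sum_{j=1}^m \multidist{(X_{i,j})_{i=1}^m}$; and $\multidist{\pi_1(W_I)\,|\,\pi_0(W_I)} = \multidist{\pi_1(W_I)} = \multidist{(\sum_j X_{i,j})_{i=1}^m}$ directly from the definitions.

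The crux is the identity, for $2 \le d \le m$,
\[
  \bigmultidist{\pi_d(W_I)\,|\,\pi_{d-1}(W_I)} = \Bigmultidist{(X_{i,d-1})_{i=1}^m \;\big|\; (X_{i,d-1}+X_{i,d}+\cdots+X_{i,m})_{i=1}^m}.
\]
To see this one uses \eqref{multi-def-cond}: conditionally on $\pi_{d-1}(W_i) = (X_{i,1},\dots,X_{i,d-2},\,X_{i,d-1}+\cdots+X_{i,m})$ the vector $\pi_d(W_i)$ is determined by $X_{i,d-1}$ alone, and $(X_{i,1},\dots,X_{i,d-2})$ is independent of the pair $(X_{i,d-1},\, X_{i,d-1}+\cdots+X_{i,m})$, so the $i$-th individual conditional entropy collapses to $\ent{X_{i,d-1}\,|\,X_{i,d-1}+\cdots+X_{i,m}}$; the same argument handles the joint term $\Bigent{\sum_i \pi_d(W_i)\,\big|\,(\pi_{d-1}(W_j))_{j}}$. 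Since \eqref{chain-eq-cond''} asserts $\multidist{W_I} \ge \sum_{d=1}^m \bigmultidist{\pi_d(W_I)\,|\,\pi_{d-1}(W_I)} + \mutual{\xsum : \pi_1(W_I)\,|\,\pi_1(\xsum)}$, and the sum over $d$ now equals $\multidist{(\sum_j X_{i,j})_{i=1}^m} + \sum_{j=1}^{m-1}\bigmultidist{(X_{i,j})_{i=1}^m\,|\,(X_{i,j}+\cdots+X_{i,m})_{i=1}^m}$ while $\multidist{W_I} = \sum_{j=1}^m \multidist{(X_{i,j})_{i=1}^m}$, rearranging gives precisely the inequality of the corollary (the right-hand side regrouping as $\sum_{j=1}^{m-1}(\multidist{(X_{i,j})_i} - \multidist{(X_{i,j})_i\mid\cdots}) + \multidist{(X_{i,m})_i} - \multidist{(\sum_j X_{i,j})_i}$).

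I expect the only real work to be the displayed identity for $\multidist{\pi_d(W_I)\,|\,\pi_{d-1}(W_I)}$; the one genuine idea is the choice of the groups $G_d = G^d$ with these ``merge the last two coordinates'' maps, after which everything reduces to careful entropy bookkeeping — in particular tracking which coordinates of $\pi_{d-1}(W_i)$ genuinely carry information about $X_{i,d-1}$ — which is routine given the independence of the $X_{i,j}$ and the conditional chain rule \eqref{chain-rule'}.
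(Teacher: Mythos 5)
Your proof is correct and takes essentially the same approach as the paper: you choose the same groups $G_d = G^d$ and the same composite homomorphisms $\pi_d$, apply \cref{multidist-chain-rule-iter} (in the form \eqref{chain-eq-cond''}) to the row tuple $W_I$, and identify the conditional multidistance terms via the same independence bookkeeping. The only difference is cosmetic (indexing by $d$ rather than $j=d-1$, and phrasing the $\pi_d$ as composites of ``merge-the-last-two'' maps).
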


\begin{proof}
  In \cref{multidist-chain-rule-iter} we take $G_d \coloneqq G^d$ with the maps $\pi_d \colon G^m \to G^d$ for $d=1,\dots,m$ defined by
\[
  \pi_d(x_1,\dots,x_m) \coloneqq (x_1,\dots,x_{d-1}, x_d + \cdots + x_m)
\]
with $\pi_0=0$. Since $\pi_{d-1}(x)$ can be obtained from $\pi_{d}(x)$ by applying a homomorphism, we obtain a sequence of the form~\eqref{g-seq}.

Now we apply \cref{multidist-chain-rule-iter} with $I = \{1,\dots, m\}$ and $X_i \coloneqq (X_{i,j})_{j = 1}^m$.  Using joint independence, we find that
\[
  \multidist{ X_{I} } = \sum_{j=1}^m \bigmultidist{ (X_{i,j})_{i \in I} }.
\]
On the other hand, for $1 \leq j \leq m-1$, we see that once $\pi_{j}(X_i)$ is fixed, $\pi_{j+1}(X_i)$ is determined by $X_{i, j}$ and vice versa, so
\[
  \bigmultidist{ \pi_{j+1}(X_{I}) \; | \; \pi_{j}(X_{I})} = \bigmultidist{ (X_{i, j})_{i \in I} \; | \; \pi_{j}(X_{I} )}.
\]
Since the $X_{i,j}$ are jointly independent, we may further simplify:
\[
  \bigmultidist{ (X_{i, j})_{i \in I} \; | \; \pi_{j}(X_{I})} = \bigmultidist{ (X_{i,j})_{i \in I} \; | \; ( X_{i, j} + \cdots + X_{i, m})_{i \in I} } .
\]
Putting all this into the conclusion of \cref{multidist-chain-rule-iter}, we obtain
\[
  \sum_{j=1}^{m} \multidist{ (X_{i,j})_{i \in I} }
  \geq
  \begin{aligned}[t]
  &\sum_{j=1}^{m-1} \bigmultidist{ (X_{i,j})_{i \in I} \; | \; (X_{i,j} + \cdots + X_{i,m})_{i \in I} } \\
  &\!\!\!+
  \Bigmultidist{ \bigl(\tsum_{j=1}^m X_{i,j}\bigr)_{i \in I}} \\
  &\!\!\!+\Bigmutual{  \bigl(\tsum_{i=1}^m X_{i,j}\bigr)_{j =1}^{m} : \bigl(\tsum_{j=1}^m X_{i,j}\bigr)_{i = 1}^m \; \big| \; \tsum_{i=1}^m \tsum_{j = 1}^m  X_{i,j} }
  \end{aligned}
\]
and the claim follows by rearranging. \end{proof}

\section{The main argument}

We now begin a preliminary discussion of the main task for the rest of the paper, the proof of \cref{thm-main-multi-dec}. As in~\cite{ggmt}, it is convenient to work in the contrapositive, which allows us to take advantage of the notation of conditional entropy.

Suppose, for the rest of the paper, that we have some $G$-valued random variables $X_i$, $i \in I = \{1,\dots, m\}$, taking values in a set $S \subseteq G$, with
\begin{equation}\label{k-def}
  k \coloneqq \multidist{X_{I}},
\end{equation}
and suppose that they cannot be decremented in the manner stated in \cref{thm-main-multi-dec}. That is to say,

\begin{equation}\label{main-contra}
 \multidist{ X'_{I} } \geq (1 - \eta) k - \eta \sum_{i \in I} \dist{X_i}{X'_i}
\end{equation}
for every tuple $X'_{I}$ of $G$-valued random variables taking values in $m^3 S$. The aim is then to show that $k = 0$; this is equivalent to \cref{thm-main-multi-dec}.

We now observe that~\eqref{main-contra} implies a conditioned variant of itself, namely
\begin{equation}\label{main-contra-cond}
 \multidist{ X'_{I} | Y_{I} } \geq (1 - \eta) k - \eta \sum_{i \in I} \dist{X_i}{X'_i|Y_i}
\end{equation}
for any tuple $X'_{I}$ of $G$-valued random variables taking values in $m^3 S$ and for any tuple $Y_{I}$ of random variables. Here, the conditioned multidistance $\multidist{ X'_{I} | Y_{I} }$ is defined as in~\eqref{multi-def-cond},~\eqref{multi-def-cond-alt}, and the conditional Ruzsa distance $\dist{X}{Y|Z}$ is defined in~\eqref{cond-form}. To obtain~\eqref{main-contra-cond} from~\eqref{main-contra}, simply replace $X'_i$ by $(X'_i | Y_i = y_i)$ and then sum weighted by $\prod_{i \in I} p_{Y_i}(y_i)$.

The inequality~\eqref{main-contra-cond} may be rearranged in the following convenient way:
\[ k - \multidist{ X'_{I} | Y_{I} } \leq \eta \biggl( k + \sum_{i \in I} \dist{X_i}{X'_i|Y_i}\biggr).\]
It also turns out to be convenient to note that
\begin{equation}\label{5.3-conv} k - \multidist{ X'_{I} | Y_{I} } \leq \eta \biggl( k + \sum_{i \in I} \dist{X_{\sigma(i)}}{X'_i|Y_i}\biggr)\end{equation} for any permutation $\sigma : I \rightarrow I$, since the multidistance is permutation invariant.

\subsection{Bounding the mutual information}

The first main step of the argument is to observe that~\eqref{5.3-conv} combines with \cref{cor-multid} to give the following inequality.

\begin{proposition}\label{key}
  Let $G$ be an abelian group.
  Let $m \geq 2$, and suppose that $X_{i,j}$, $1 \leq i,j \leq m$, are jointly independent $G$-valued random variables, such that for each $j = 1,\dots,m$, the random variables $(X_{i,j})_{i = 1}^m$ coincide in distribution with some permutation of the random variables $X_I = (X_i)_{i =1}^m$.
  Write
  \[
    \cI := \Bigmutual{ \bigl(\tsum_{i=1}^m X_{i,j}\bigr)_{j =1}^{m} : \bigl(\tsum_{j=1}^m X_{i,j}\bigr)_{i = 1}^m \; \big| \; \tsum_{i=1}^m \tsum_{j = 1}^m  X_{i,j} }.
  \]
  Then \uppar{assuming~\eqref{k-def} and~\eqref{main-contra} hold} we have
  \begin{equation}\label{I-ineq}
    \cI \leq 2\eta m \biggl( k +  \sum_{i=1}^m \dist{X_i}{X_i} \biggr) \leq 2 m (2m+1) \eta k.
  \end{equation}
\end{proposition}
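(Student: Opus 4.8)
The plan is to combine \cref{cor-multid} with the hypothesis~\eqref{5.3-conv} in a direct bookkeeping argument. First I would apply \cref{cor-multid} to the tuple $(X_{i,j})_{1 \le i,j \le m}$; this bounds $\cI$ by
\[
  \sum_{j=1}^{m-1}\Bigl(\bigmultidist{(X_{i,j})_{i=1}^m} - \bigmultidist{(X_{i,j})_{i=1}^m \,\big|\, (X_{i,j}+\cdots+X_{i,m})_{i=1}^m}\Bigr) + \bigmultidist{(X_{i,m})_{i=1}^m} - \bigmultidist{\bigl(\tsum_{j=1}^m X_{i,j}\bigr)_{i=1}^m}.
\]
By the hypothesis that each column $(X_{i,j})_{i=1}^m$ is distributed as a permutation of $X_I$, every multidistance $\bigmultidist{(X_{i,j})_{i=1}^m}$ equals $\multidist{X_I} = k$ (multidistance is permutation-invariant, \cref{multi-def}), so the non-conditioned terms contribute exactly $mk$ in total. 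Thus
\[
  \cI \le \sum_{j=1}^{m-1}\Bigl(k - \bigmultidist{(X_{i,j})_{i=1}^m \,\big|\, (X_{i,j}+\cdots+X_{i,m})_{i=1}^m}\Bigr) + \Bigl(k - \bigmultidist{\bigl(\tsum_{j=1}^m X_{i,j}\bigr)_{i=1}^m}\Bigr).
\]

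Now each of the $m$ summands on the right is of the form $k - \multidist{X'_I \mid Y_I}$ for an appropriate choice of tuple $X'_I$ and conditioning tuple $Y_I$: for $j < m$ take $X'_i := X_{i,j}$ and $Y_i := X_{i,j}+\cdots+X_{i,m}$ (which are jointly independent in $i$ since the $X_{i,j}$ are), and for the last term take $X'_i := \sum_{j=1}^m X_{i,j}$ with no conditioning (i.e.\ $Y_i$ trivial). For each such choice I would invoke~\eqref{5.3-conv} with the permutation $\sigma$ that matches the column's distribution to $X_I$, giving $k - \multidist{X'_I \mid Y_I} \le \eta\bigl(k + \sum_{i} \dist{X_{\sigma(i)}}{X'_i \mid Y_i}\bigr)$. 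The point is then to bound each conditional Ruzsa distance $\dist{X_{\sigma(i)}}{X'_i \mid Y_i}$ by $\dist{X_i}{X_i}$ up to harmless manipulations: since $X'_i$ (possibly after conditioning on $Y_i$) is, up to sign and translation, a sum of independent copies of various $X_\ell$, one uses the triangle inequality (\cref{ruzsa-calculus}\ref{rc-i}), the Kaimonovich--Vershik--Madiman inequality (\cref{ruzsa-calculus}\ref{rc-x}), and the fact that all $\dist{X_a}{X_b}$ are comparable via the triangle inequality, to reduce everything to the diagonal distances $\dist{X_i}{X_i}$. Summing over the $m$ terms yields $\cI \le 2\eta m\bigl(k + \sum_{i=1}^m \dist{X_i}{X_i}\bigr)$; one has to be a little careful to arrange the constant $2$ correctly, but this is where $X'_i$ being roughly a sum of two "blocks" (one conditioned, one not) naturally produces the factor $2$.

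For the second inequality in~\eqref{I-ineq}, I would simply apply \cref{lem:ruzsa-multi}(ii), which gives $\sum_{i \in I} \dist{X_i}{X_i} \le 2m\,\multidist{X_I} = 2mk$, so that $k + \sum_i \dist{X_i}{X_i} \le (2m+1)k$ and hence $\cI \le 2\eta m (2m+1) k$.

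The main obstacle I anticipate is the middle step: correctly identifying each of the $m$ multidistance-difference terms as an instance to which~\eqref{5.3-conv} applies (in particular verifying the required joint independence of the conditioning tuples $Y_I$, and choosing the matching permutation $\sigma$ for each column), and then controlling the resulting conditional Ruzsa distances $\dist{X_{\sigma(i)}}{X'_i \mid Y_i}$ by the diagonal distances with the right numerical constant. Everything else is routine: \cref{cor-multid} does the structural work, permutation-invariance disposes of the unconditioned terms, and \cref{lem:ruzsa-multi}(ii) gives the final clean bound.
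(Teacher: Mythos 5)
Your overall structure coincides with the paper's: apply \cref{cor-multid} to decompose $\cI$, dispose of the unconditioned multidistances by permutation invariance, invoke~\eqref{5.3-conv} with $(X'_i, Y_i) = (X_{i,j}, X_{i,j}+\cdots+X_{i,m})$ for the first $m-1$ terms and with $(X'_i, Y_i) = (V_i, \text{trivial})$ for the last, and finish with \cref{lem:ruzsa-multi}(ii). So far, so good.

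The gap is precisely the step you flag as ``harmless manipulations,'' and it is not harmless. It is \emph{not} true that each conditional Ruzsa distance $\dist{X_{i,j}}{X_{i,j}\,|\,X_{i,j}+\cdots+X_{i,m}}$ is bounded by $\dist{X_i}{X_i}$ up to lower-order terms. What \cref{ruzsa-calculus}\ref{rc-ix} actually gives is
\[
  \bigdist{X_{i,j}}{X_{i,j}\,|\,X_{i,j}+\cdots+X_{i,m}} \leq \dist{X_{i,j}}{X_{i,j}} + \tfrac12\bigl(\ent{X_{i,j}+\cdots+X_{i,m}} - \ent{X_{i,j+1}+\cdots+X_{i,m}}\bigr),
\]
and the entropy-difference term is \emph{not} individually small. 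The paper's argument hinges on two observations you omit: first, these differences \emph{telescope} when summed over $j$, collapsing the whole inner sum to $\tfrac12(\ent{V_i} - \ent{X_{i,m}})$; second, the resulting $\ent{V_i}$ terms are then controlled by \cref{lem:cool-dist-fact}(ii), which bounds $\ent{V_i}$ by $\ent{\sum_j X_j}$ plus the diagonal distances, feeding back into $k$ via $\multidist{X_I}=k$. Without the telescoping you would accumulate $m-1$ unbounded entropy-differences per row, and without \cref{lem:cool-dist-fact}(ii) the leftover $\ent{V_i}$ would not reduce to $k$ and the $\dist{X_i}{X_i}$ terms. The generic tools you cite (triangle inequality, KVM, comparability of distances) do not substitute for these two steps, and a naive application of them would yield a bound with a worse dependence on $m$ than the claimed $2\eta m$. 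The second inequality in~\eqref{I-ineq} via \cref{lem:ruzsa-multi}(ii) is correctly identified and routine.
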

For each $j \in \{1,\dots,m\}$ we call the tuple $(X_{i,j})_{i = 1}^m$ a \emph{column} and for each $i \in \{1,\dots, m\}$ we call the tuple $(X_{i,j})_{j = 1}^m$ a \emph{row}. Hence, by hypothesis, each column is a permutation of $X_I = (X_i)_{i=1}^m$.
\begin{proof}
  \cref{cor-multid} states that \begin{equation}\label{441} \cI \leq \sum_{j=1}^{m-1} A_j + B,\end{equation}
where
\[
  A_j \coloneqq \multidist{ (X_{i, j})_{i = 1}^m} - \bigmultidist{ (X_{i, j})_{i = 1}^m  \; \big| \; (X_{i,j} + \cdots + X_{i,m})_{i =1}^m  }
\]
and
\[
  B \coloneqq \bigmultidist{ (X_{i,m})_{i=1}^m} - \Bigmultidist{ \bigl(\tsum_{j=1}^m X_{i,j}\bigr)_{i=1}^m }.
\]
We first consider the $A_j$, for fixed $j \in \{1,\dots, m-1\}$.
By permutation symmetry of the multidistance, and our hypothesis on columns, we have
\[
  \multidist{ (X_{i, j})_{i = 1}^m} = \multidist{(X_i)_{i=1}^m} = \multidist{X_I} = k.
\]
Let $\sigma = \sigma_j \colon I \to I$ be a permutation such that $X_{i,j} = X_{\sigma(i)}$, and write $X'_i \coloneqq X_{i,j}$ and $Y_i \coloneqq X_{i,j} + \cdots + X_{i,m}$.
Note that all these random variables take values in $mS$.
By~\eqref{5.3-conv}, we conclude that
\begin{align}
  A_j & \leq \eta \biggl( k + \sum_{i = 1}^m \bigdist{X_{i,j}}{X_{i, j}|X_{i, j} + \cdots + X_{i,m}}\biggr).\label{54a}
\end{align}
We similarly consider $B$.  By permutation symmetry on the $m$-th column,
\[
  \bigmultidist{ (X_{i, m})_{i = 1}^m} = \multidist{X_I} = k.
\]
For $i \in I$, denote the sum of row $i$ by
\[
  V_i \coloneqq \sum_{j=1}^m X_{i,j};
\]
if we apply~\eqref{5.3-conv} again, now with $X_{\sigma(i)} = X_{i,m}$, $X'_i := V_i$, and with the variable $Y_i$ being trivial (that is, using the unconditioned statement~\eqref{main-contra}), we obtain
\begin{equation}\label{55a}
  B \leq \eta \biggl( k + \sum_{i = 1}^m \dist{X_{i,m}}{V_i}  \biggr).
\end{equation}

It remains to bound the distances appearing in~\eqref{54a} and~\eqref{55a} further using Ruzsa calculus.
For $1 \leq j \leq m-1$ and $1 \leq i \leq m$, by \cref{ruzsa-calculus}\ref{rc-ix} we have
\begin{align*} &\bigdist{X_{i,j}}{X_{i,j}| X_{i,j}+\cdots+X_{i,m}}
\leq \dist{X_{i,j}}{X_{i,j}} \\
&\quad + \tfrac{1}{2} \bigl(\ent{X_{i,j}+\cdots+X_{i,m}} - \ent{X_{i,{j+1}}+\cdots+X_{i,m}}\bigr).
\end{align*}
For each $i$, summing over $j = 1,\dots, m-1$ gives
\begin{align}
  \nonumber
  &\sum_{j=1}^{m-1} \bigdist{X_{i,j}}{X_{i,j}| X_{i,j}+\cdots+X_{i,m}} \\
  &\qquad \leq \sum_{j=1}^{m-1} \dist{X_{i,j}}{X_{i,j}} + \frac12 \bigl( \ent{V_i} - \ent{X_{i,m}} \bigr).
  \label{eq:distbnd1}
\end{align}
On the other hand, by \cref{lem:cool-dist-fact}(i) (since $X_{i,m}$ appears in the sum $V_i$) we have
\begin{align}
  \dist{X_{i,m}}{V_i}
  &\leq \dist{X_{i,m}}{X_{i,m}} + \frac12 \bigl( \ent{V_i} - \ent{X_{i,m}} \bigr).
  \label{eq:distbnd2}
\end{align}
Combining~\eqref{441},~\eqref{54a} and~\eqref{55a} with~\eqref{eq:distbnd1} and~\eqref{eq:distbnd2} (the latter two summed over $i$), we get
\begin{align}
  \nonumber
  \frac1{\eta} \cI &\leq m k + \sum_{i,j=1}^m \dist{X_{i,j}}{X_{i,j}} + \sum_{i=1}^m (\ent{V_i} - \ent{X_{i,m}}) \\
      &= m k + m \sum_{i=1}^m \dist{X_i}{X_i} + \sum_{i=1}^m \ent{V_i} - \sum_{i=1}^m \ent{X_i}.
      \label{eq:distbnd3}
\end{align}
By \cref{lem:cool-dist-fact}(ii) (with $f$ taking each $j$ to the index $j'$ such that $X_{i,j}$ is a copy of $X_{j'}$) we obtain the bound
\[
  \ent{V_i} \leq \Bigent{\tsum_{j=1}^m X_j} + \sum_{j=1}^m \dist{X_{i,j}}{X_{i,j}}.
\]
Finally, summing over $i$ and using $\multidist{X_I} = k$ gives
\begin{align*}
  \sum_{i=1}^m \ent{V_i} - \sum_{i=1}^m \ent{X_i} & \leq \sum_{i,j=1}^m \dist{X_{i,j}}{X_{i,j}} + m k \\ & = m\sum_{i = 1}^m \dist{X_i}{X_i} + mk,
\end{align*}
where in the second step we used the permutation hypothesis. Combining this with~\eqref{eq:distbnd3} gives the first inequality in~\eqref{I-ineq}. The second follows immediately by \cref{lem:ruzsa-multi}(ii).
\end{proof}
\begin{remark}%
  \label{rem:sum-fiber}
 The choices of the $X'_i, Y_i$ used in this argument are analogous to the choices we called ``sums'' and ``fibres'' in the informal discussion in~\cite[Section 3]{ggmt}.
\end{remark}

\subsection{The endgame}

We now define a tuple of independent random variables $(Y_{i,j})_{i,j \in \Z/m\Z}$ as follows:
by a slight abuse of notation, we identify $\Z/m\Z$ and $\{1,\dots,m\}$ in the obvious way, and let $Y_{i,j}$ be an independent copy of $X_i$.

We will be interested in the following random variables derived from $(Y_{i,j})_{i,j \in \Z/m\Z}$:
\[
  W \coloneqq \sum_{i,j \in \Z/m\Z} Y_{i,j}
\]
and
\[
  Z_1 \coloneqq \sum_{i,j \in \Z/m\Z} i Y_{i,j},\ \ \
  Z_2 \coloneqq \sum_{i,j \in \Z/m\Z} j Y_{i,j},\ \ \
  Z_3 \coloneqq \sum_{i,j \in \Z/m\Z} (-i-j) Y_{i,j}.
\]
The addition $(-i-j)$ takes place over $\Z/m\Z$.
Note that, because we are assuming $G$ is $m$-torsion, it is well-defined to multiply elements of $G$ by elements of $\Z/m\Z$.
Moreover, we note that the $Z_i$ are all supported on $m^3 S$.

Because they will arise frequently, we will also define for $i,j,r \in \Z/m\Z$ the variables
\begin{equation}\label{pqr-defs}
  P_i \coloneqq \sum_{j \in \Z/m\Z} Y_{i,j} , \quad
  Q_j \coloneqq \sum_{i \in \Z/m\Z} Y_{i,j} , \quad
  R_r \coloneqq \sum_{\substack{i,j \in \Z/m\Z \\ i+j=-r}} Y_{i,j} .\end{equation}
Note the identities
\begin{equation}
  \label{eq:is-fn}
  Z_1 = \sum_{i \in \Z/m\Z} i P_i,\qquad
  Z_2 = \sum_{j \in \Z/m\Z} j Q_j,\qquad
  Z_3 = \sum_{r \in \Z/m\Z} r R_r.
\end{equation}

There are several key facts to note concerning this situation.
One is the easily verified statement that
\begin{equation}%
  \label{eq:sum-zero}
  Z_1+Z_2+Z_3= 0
\end{equation}
holds identically.
Another is the following statement, which roughly says that $Z_1,Z_2,Z_3$ are ``almost'' pairwise independent conditional on $W$.
\begin{proposition}%
  \label{prop:52}
  Assuming still that~\eqref{k-def} and~\eqref{main-contra} hold, we have
  \[
    \mutual{Z_1 : Z_2\, |\, W},\
    \mutual{Z_2 : Z_3\, |\, W},\
    \mutual{Z_1 : Z_3\, |\, W} \leq t
  \]
  where
  \begin{equation}\label{t-def}
    t \coloneqq 2m (2m+1) \eta k.
  \end{equation}
\end{proposition}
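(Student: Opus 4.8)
The plan is to derive all three inequalities of \cref{prop:52} from \cref{key}, by combining a change of variables on the index set $\Z/m\Z\times\Z/m\Z$ with the data-processing inequality for conditional mutual information. Recall from \eqref{eq:is-fn} that $Z_1$ is a deterministic function of the tuple $(P_i)_{i\in\Z/m\Z}$, that $Z_2$ is a deterministic function of $(Q_j)_{j\in\Z/m\Z}$, and that $Z_3$ is a deterministic function of $(R_r)_{r\in\Z/m\Z}$. Since conditional mutual information does not increase when either of its two arguments is replaced by a deterministic function of itself — an instance of the data-processing inequality, which follows from the chain rule for mutual information together with the nonnegativity \eqref{nonneg-cond} — it suffices to show that $\mutual{(P_i)_i : (Q_j)_j \mid W}$, $\mutual{(Q_j)_j : (R_r)_r \mid W}$ and $\mutual{(P_i)_i : (R_r)_r \mid W}$ are all at most $t$.

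Each of these three bounds I would obtain by applying \cref{key} to a suitable relabeling of the array $(Y_{i,j})_{i,j\in\Z/m\Z}$. The structural observation is that $P_i$, $Q_j$, $R_r$ are the sums of the $Y_{i,j}$ along, respectively, the rows $\{i=\mathrm{const}\}$, the columns $\{j=\mathrm{const}\}$, and the anti-diagonals $\{i+j=\mathrm{const}\}$ of $\Z/m\Z\times\Z/m\Z$, and that any two of these three families of lines exhibit $\Z/m\Z\times\Z/m\Z$ as an $m\times m$ grid with singleton cells. Concretely: for the pair $(\{P_i\},\{Q_j\})$ apply \cref{key} with $X_{i,j}:=Y_{i,j}$; for $(\{Q_j\},\{R_r\})$ apply it with $X_{r,j}:=Y_{-r-j,\,j}$, for which $\sum_r X_{r,j}=\sum_i Y_{i,j}=Q_j$ and $\sum_j X_{r,j}=\sum_{i+j=-r}Y_{i,j}=R_r$; and for $(\{P_i\},\{R_r\})$ apply it with $X_{i,r}:=Y_{i,\,-r-i}$, for which $\sum_i X_{i,r}=R_r$ and $\sum_r X_{i,r}=P_i$. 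In each case the array is a bijective relabeling of $(Y_{i,j})$ — indeed $(r,j)\mapsto(-r-j,j)$ and $(i,r)\mapsto(i,-r-i)$ are bijections of $(\Z/m\Z)^2$ — hence a jointly independent tuple of $G$-valued random variables whose total sum is $W$.

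It remains to check the hypothesis of \cref{key} that, for each value of the second index, the corresponding column is a permutation (in distribution) of $X_I$. This holds because in all three relabelings the columns are, up to a reordering of their entries, either the original columns $(Y_{i,j})_i$ or the anti-diagonals of $(Y_{i,j})$, along each of which the entries are jointly independent with distributions running over $X_1,\dots,X_m$ in some order; it is essential here that we never take the original \emph{rows} of $(Y_{i,j})$ as columns, since those consist of $m$ copies of a single $X_i$ — only the one-sided ``column'' hypothesis of \cref{key} allows us to avoid this. Thus \cref{key} gives $\cI\le t$ (with $t$ as in \eqref{t-def}) for the quantity $\cI=\mutual{(\text{column sums}):(\text{row sums})\mid W}$ attached to each relabeling, and by symmetry of mutual information these are precisely the three bounds of the first paragraph, completing the proof. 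I do not anticipate any genuine difficulty: essentially all the content is in \cref{key}, and what remains is the index bookkeeping above together with the observation from \eqref{eq:is-fn} that $Z_1,Z_2,Z_3$ are functions of $(P_i)_i,(Q_j)_j,(R_r)_r$ so that data processing applies.
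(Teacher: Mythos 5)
Your proof is correct and follows essentially the same route as the paper's: apply \cref{key} three times to bijective relabelings of the array $(Y_{i,j})$ so that the row-sum and column-sum tuples become each of the three pairs among $(P_i)_i,(Q_j)_j,(R_r)_r$, and then pass from these tuples to $Z_1,Z_2,Z_3$ by the data-processing inequality (\cref{lem:data-processing}) using \eqref{eq:is-fn}. The only cosmetic difference is in the exact reindexing maps — the paper uses $X'_{i,j}=Y_{i-j,j}$ and $X''_{i,j}=Y_{i,j-i}$ (so the anti-diagonal sums appear as $R_{-i}$ and $R_{-j}$), whereas you use $(r,j)\mapsto(-r-j,j)$ and $(i,r)\mapsto(i,-r-i)$ (yielding $R_r$ directly) — but these are interchangeable. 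Your remark that the asymmetric ``column-only'' hypothesis of \cref{key} is what lets you avoid ever treating a constant-in-distribution row as a column is a nice observation that the paper leaves implicit.
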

\begin{proof}%
  We analyze these variables by applying \cref{key} in several different ways.
  In the first application, take $X_{i,j}=Y_{i,j}$.
  Note that each column $(X_{i,j})_{i=1}^m$ is indeed a permutation of $X_1,\dots,X_m$; in fact, the trivial permutation.
  Note also that for each $i \in \Z/m\Z$, the row sum is
  \[
    \sum_{j=1}^m X_{i,j} = \sum_{j \in \Z/m\Z} Y_{i,j} = P_i
  \]
  and for each $j \in \Z/m\Z$, the column sum is
  \[
    \sum_{i=1}^m X_{i,j} = \sum_{i \in \Z/m\Z} Y_{i,j} = Q_j.
  \]
  Finally note that $\sum_{i,j=1}^m X_{i,j} = W$.
  The conclusion of \cref{key} then states that
  \[
    \Bigmutual{ (P_i)_{i \in \Z/m\Z} : (Q_j)_{j \in \Z/m\Z } \,\big|\, W } \leq t,
  \]
  with $t$ as in~\eqref{t-def}.
  Since $Z_1$ is a function of $(P_i)_{i \in \Z/m\Z}$ by~\eqref{eq:is-fn}, and similarly $Z_2$ is a function of $(Q_j)_{j \in \Z/m\Z}$, it follows immediately from the data processing inequality, \cref{lem:data-processing}, that
  \[
    \mutual{ Z_1 : Z_2 \,|\, W } \leq t.
  \]

  In the second application of \cref{key}, we instead consider $X'_{i,j} = Y_{i-j,j}$.
  Again, for each fixed $j$, the tuple $(X'_{i,j})_{i=1}^m$ is a permutation of $X_1,\dots,X_m$.
  This time the row sums for $i \in \{1,\dots, m\}$ are
  \[
    \sum_{j=1}^m X'_{i,j} = \sum_{j \in \Z/m\Z} Y_{i-j,j} = R_{-i}.
  \]
 Similarly, the column sums for $j \in \{1,\dots, m\}$ are
  \[
    \sum_{i=1}^m X'_{i,j} = \sum_{i \in \Z/m\Z} Y_{i-j,j} = Q_j.
  \]
  As before, $\sum_{i,j=1}^m X'_{i,j} = W$.
  Hence, using~\eqref{eq:is-fn} and the data processing inequality again, the conclusion of \cref{key} tells us
  \[
    \mutual{ Z_3 :  Z_2 \,|\, W} \leq \Bigmutual{ (R_i)_{i \in \Z/m\Z} : (Q_j)_{j \in \Z/m\Z } \,\big|\, W } \leq t.
  \]

  In the third application\footnote{In fact, by permuting the variables $(Y_{i,j})_{i,j \in \Z/m\Z}$, one can see that the random variables $(W, Z_1, Z_2)$ and $(W, Z_1, Z_3)$ have the same distribution, so this is in some sense identical to -- and can be deduced from -- the first application.} of \cref{key}, take $X''_{i,j} = Y_{i,j-i}$.
  The column and row sums are respectively
  \[
    \sum_{j=1}^m X''_{i,j} = \sum_{j \in \Z/m\Z} Y_{i,j-i} = P_i \] and
\[     \sum_{i=1}^m X''_{i,j} = \sum_{i \in \Z/m\Z} Y_{i,j-i} = R_{-j}.
  \]
  Hence, \cref{key} and data processing gives
  \[
    \mutual{ Z_1 : Z_3 \,|\, W} \leq \Bigmutual{ (P_i)_{i \in \Z/m\Z} : (R_j)_{j \in \Z/m\Z } \,\big|\, W } \leq t,
  \]
  which completes the proof.
\end{proof}

At this point, we are in a very similar situation to~\cite[Section~7]{ggmt}: conditioning on a typical value $W=w$, the random variables $Z_1,Z_2,Z_3$ are ``almost pairwise independent'', in the sense that the mutual information of any two of them is small.
Using the entropic Balog--Szemer\'edi--Gowers lemma, we can find related variables with very small doubling, and then use these as candidates in~\eqref{main-contra-cond} to obtain a contradiction (unless $k=0$).

To put this into practice, we first collect some Ruzsa calculus type estimates about the variables $W$ and $Z_2$.
\begin{lemma}%
  \label{lem:ruzsa-w-z}
  For $W$ and $Z_2$ as above, the following hold:
  \begin{enumerate}[label=\textup{(\roman*)}]
    \item $\ent{W} \leq (2m-1)k + \frac1m \sum_{i=1}^m \ent{X_i}$;
    \item $\ent{Z_2} \leq \big( 28 (m-1) \log_2 m\big) k + \frac{1}{m} \sum_{i=1}^m \ent{X_i}$;
    \item $\mutual{W : Z_2} \leq 2 (m-1) k$;
    \item $\sum_{i=1}^m \dist{X_i}{Z_2|W} \leq 15 (m^2 \log_2 m) k$.
  \end{enumerate}
\end{lemma}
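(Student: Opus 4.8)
The plan is to bound each quantity by repeatedly unwinding the definitions $W = \sum_{i,j} Y_{i,j}$ and $Z_2 = \sum_j j Q_j$, where $Q_j = \sum_i Y_{i,j}$, and applying the Ruzsa-calculus toolkit already available (\cref{ruzsa-calculus}, \cref{lem:cool-dist-fact}, \cref{lem:ruzsa-multi}, and the crude linearization bound~\eqref{crude-bukh}), together with the key identity $\multidist{X_I}=k$ and $\sum_{i\in I}\dist{X_i}{X_i}\le 2mk$ from \cref{lem:ruzsa-multi}(ii). For (i), I would write $W = \sum_{i=1}^m P_i$ where $P_i = \sum_j Y_{i,j}$ is a sum of $m$ independent copies of $X_i$; using $\ent{P_i}\le \ent{X_i}+ (m-1)$-many applications controlled by $\dist{X_i}{X_i}$ (via \cref{lem:cool-dist-fact}(ii) or submodularity), and then $\ent{W}\le \ent{\sum_i X_i'} + \sum_i(\text{correction})$ where the $X_i'$ are single copies; comparing with $\multidist{X_I}=k = \ent{\sum_i X_i} - \frac1m\sum_i\ent{X_i}$ and absorbing the $\dist{X_i}{X_i}$ corrections into $O(mk)$ gives the stated $(2m-1)k + \frac1m\sum\ent{X_i}$, with the precise constant $2m-1$ coming from bookkeeping the number of pairwise-distance terms.

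For (ii), the point is that $Z_2 = \sum_{j=1}^{m-1} j Q_j$ (the $j=m$, i.e. $j\equiv 0$, term drops) is a $\Z/m\Z$-linear combination of the independent variables $Q_1,\dots,Q_{m-1}$, each $Q_j$ being a sum of $m$ independent copies of the $X_i$'s (one from each row). I would first bound $\ent{Q_j}$ as in (i), then build up $Z_2$ one summand at a time: $\ent{\,jQ_j + (\text{partial sum})\,} - \ent{\text{partial sum}} \le \ent{jQ_j}$ trivially, but to get the $\log_2 m$ factor rather than a factor of $m$ one instead uses~\eqref{crude-bukh} (the $O(\log|a|)$ linearization bound) to compare $\ent{jQ_j}$ with $\ent{Q_j}$, costing $O(\log m)\cdot\dist{\cdot}{\cdot}$ per term, summed over $j=1,\dots,m-1$; tracking constants yields the $28(m-1)\log_2 m$. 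For (iii), $\mutual{W:Z_2} = \ent{W}+\ent{Z_2}-\ent{W,Z_2}$; here one uses that $(W,Z_2)$ is determined by, and close in entropy to, the joint tuple $(Q_1,\dots,Q_m)$ — actually $\ent{W,Z_2}\ge \ent{W}+\ent{Z_2} - (\text{sum of a few pairwise Ruzsa distances among the }Q_j)$ — and these distances are each $O(k)$, giving the clean bound $2(m-1)k$.

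For (iv), I would condition on $W=w$ and estimate $\dist{X_i}{Z_2\mid W}$ using the triangle inequality: $\dist{X_i}{Z_2\mid W}\le \dist{X_i}{Z_2} \le \dist{X_i}{-Q_j} + (\text{linearization cost for the factor }j) + (\text{sumset cost of }Z_2\text{ vs a single }Q_j)$, picking a favorable $j$ by averaging as in the proof of \cref{multi-vsmall}; each piece is controlled by $\dist{X_i}{-X_k}\le (m-1)k/(\cdots)$-type bounds from \cref{lem:ruzsa-multi}(i), by~\eqref{crude-bukh} for the $O(\log m)$ factor, and by \cref{lem:cool-dist-fact} for the conditioning; summing over $i\in I$ produces $O(m^2\log_2 m)\,k$ with the stated constant $15$ after bookkeeping. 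The main obstacle is the second estimate (ii): getting a $\log_2 m$ dependence rather than a linear-in-$m$ dependence forces one to use the sharper linearization inequality~\eqref{crude-bukh} carefully and to organize the telescoping sum over $j$ so that the coefficients $j\in\{1,\dots,m-1\}$ each contribute only $O(\log m)$; all the other parts are routine applications of entropic Ruzsa calculus, and the constants, while explicit, are not delicate.
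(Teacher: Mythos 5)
There are two genuine gaps in your plan.

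\textbf{Part (i): wrong decomposition.} You propose to write $W=\sum_{i=1}^m P_i$ with $P_i=\sum_j Y_{i,j}$ (a row decomposition), bound $\ent{P_i}-\ent{X_i}$ by $(m-1)\dist{X_i}{X_i}$, and then merge. Chase this through: \cref{lem:cool-dist-fact}(ii) gives $\ent{W}\le\ent{S}+\sum_i(\ent{P_i-X_i}-\ent{X_i})$, and then~\eqref{kv-1} gives $\ent{P_i-X_i}-\ent{X_i}\le m\dist{X_i}{X_i}$, so altogether $\ent{W}\le\ent{S}+m\sum_i\dist{X_i}{X_i}\le \ent{S}+2m^2 k$. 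That is $O(m^2 k)$, not the claimed $(2m-1)k$. The paper instead uses the \emph{column} decomposition $W=\sum_{j\in\Z/m\Z}Q_j$: every $Q_j$ has exactly the distribution of $S=\sum_i X_i$, so a single application of~\eqref{kv-1} gives $\ent{W}\le\ent{S}+(m-1)\dist{S}{-S}$, and the crucial input is \cref{lem:cool-dist-fact}(iii), $\dist{S}{-S}\le 2\multidist{X_I}=2k$, which immediately yields $(2m-1)k$. The extra factor of $m$ from your route is not harmless: it propagates into (iii) (which is just $\ent{W}-\ent{S}$) and hence into (iv), and the endgame budget with $\eta=c/m^3$ cannot absorb an $O(m^3\log m)k$ bound in (iv).

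\textbf{Part (iv): false inequality.} You write ``$\dist{X_i}{Z_2\mid W}\le\dist{X_i}{Z_2}$.'' This is not true in general; conditioning on $W$ is an average of slice distances and can increase the Ruzsa distance. The correct tool is \cref{ruzsa-calculus}\ref{rc-viii}, which gives $\dist{X_i}{Z_2\mid W}\le\dist{X_i}{Z_2}+\tfrac12\mutual{W:Z_2}$, with the extra term then controlled by (iii). Also, the detour through $\dist{X_i}{-Q_j}$ is unnecessary and harder to make quantitative: since $Y_{i,1}$ (a copy of $X_i$ with coefficient $1$) appears as a summand of $Z_2$, \cref{lem:cool-dist-fact}(i) bounds $\dist{X_i}{Z_2}\le\dist{X_i}{X_i}+\tfrac12(\ent{Z_2}-\ent{X_i})$ directly, which combines cleanly with (ii) and \cref{lem:ruzsa-multi}(ii) after summing over $i$.

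Your outline for (ii) and (iii) is in the right spirit — in particular, correctly identifying that the $\log_2 m$ comes from the Bukh-type linearization bound~\eqref{crude-bukh}, and for (iii) that $Z_2$ is a function of $Q_1,\dots,Q_{m-1}$ so $\ent{W\mid Z_2}\ge\ent{Q_m}=\ent{S}$ — but both of these also become quantitatively cleaner once the column decomposition $W=\sum_j Q_j$ is in place.
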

\begin{proof}%
  Without loss of generality, we may take $X_1,\dots,X_m$ to be independent. Write $S = \sum_{i=1}^m X_i$.
  Note that for each $j \in \Z/m\Z$, the sum $Q_j$ from~\eqref{pqr-defs} above has the same distribution as $S$.
  By \cref{ruzsa-calculus}\ref{rc-x} we have
  \begin{align*}
    \ent{W} = \Bigent{\tsum_{j \in \Z/m\Z} Q_j}  & \leq \ent{S} + \sum_{j=2}^m (\ent{Q_1+Q_j} - \ent{S}) \\ & = \ent{S} + (m-1) \dist{S}{-S}.
  \end{align*}
  By \cref{lem:cool-dist-fact}(iii), we have
  \begin{equation}
    \label{eq:s-bound}
    \dist{S}{-S} \leq 2 k
  \end{equation}
  and hence
  \[
    \ent{W} \leq 2 k (m-1) + \ent{S}.
  \]
  Since, by the definition of multidistance,
  \begin{equation}
    \label{eq:ent-s}
    \ent{S} = k + \frac1m \sum_{i=1}^m \ent{X_i},
  \end{equation}
  this implies (i).

  Turning to (ii), we observe
  \[
    \ent{Z_2} = \Bigent{\tsum_{j \in \Z/m\Z} j Q_j}.
  \]
  Applying \cref{ruzsa-calculus}\ref{rc-x} again gives
  \begin{align*}
    \ent{Z_2} &\leq \sum_{i=2}^{m-1} \ent{Q_1 + i Q_i}  - (m-2) \ent{S}.
  \end{align*}
  Using \cref{lem-lin}(ii) and~\eqref{eq:s-bound} we get
  \begin{align*}
    \ent{Z_2}
              &\leq \ent{S} + (10 \lfloor \log_2 m \rfloor + 4)(m-2) \dist{S}{-S} \\
              &\leq \ent{S} + (20 \lfloor \log_2 m \rfloor + 8) (m-2) k.
  \end{align*}
  Applying~\eqref{eq:ent-s} (and crude estimates to tidy the terms involving $m$) proves (ii). We remark that using the weaker but easier \cref{lem-lin}(i) in place of \cref{lem-lin}(ii) here would lead to similar bounds in our main results.

  Turning to (iii), we of course have  $\mutual{W : Z_2} = \ent{W} - \ent{W | Z_2}$, and since $Z_2 = \sum_{j=1}^{m-1} j Q_j$ and $W = \sum_{j=1}^m Q_j$,
  \[
    \ent{W | Z_2} \geq \ent{W \,|\, Q_1,\dots,Q_{m-1}} = \ent{Q_m} = \ent{S}.
  \]
  Hence, by (i) and~\eqref{eq:ent-s},
  \[
    \mutual{W : Z_2} \leq \ent{W} - \ent{S} \leq 2 (m-1) k,
  \]
 which is (iii).

 Finally we turn to (iv). For each $i \in \{1,\dots, m\}$, using \cref{lem:cool-dist-fact}(i) (noting the sum $Z_2$ contains $X_i$ as a summand) we have
  \begin{equation}\label{in-a-bit-6}
    \dist{X_i}{Z_2} \leq \dist{X_i}{X_i} + \tfrac12 (\ent{Z_2} - \ent{X_i})
  \end{equation}
  and using \cref{ruzsa-calculus}\ref{rc-viii} we have
  \[
    \dist{X_i}{Z_2 | W} \leq \dist{X_i}{Z_2} + \tfrac12 \mutual{W : Z_2}.
  \]
 Combining with~\eqref{in-a-bit-6} and (iii) gives
 \[ \dist{X_i}{Z_2 | W} \leq \dist{X_i}{X_i} + \tfrac12 (\ent{Z_2} - \ent{X_i}) + (m-1)k.\]
 Summing over $i$ and applying (ii) gives
 \[ \sum_{i = 1}^m \dist{X_i}{Z_2 | W} \leq \sum_{i = 1}^m \dist{X_i}{X_i} + \frac{m}{2} (28 (m-1) \log_2 m) k + m(m-1) k.\]
Finally, applying \cref{lem:ruzsa-multi}(ii) (and crude bounds for the terms involving $m$) gives (iv).
\end{proof}

We next prove the following analogue of~\cite[Lemma~7.2]{ggmt}.
\begin{lemma}%
  \label{lem:get-better}
  Let $G$ be an abelian group, let $(T_1,T_2,T_3)$ be a $G^3$-valued random variable such that $T_1+T_2+T_3=0$ holds identically, and write
  \[
    \delta \coloneqq \mutual{T_1 : T_2} + \mutual{T_1 : T_3} + \mutual{T_2 : T_3}.
  \]
  Let $Y_1,\dots,Y_n$ be some further $G$-valued random variables and let $\alpha>0$ be a constant.
  Then there exists a random variable $U$, with the support of $U$ contained in that of $T_2$, such that
  \begin{equation}
    \label{eq:get-better}
    \dist{U}{U} + \alpha \sum_{i=1}^n \dist{Y_i}{U} \leq \Bigl(2 + \frac{\alpha n}{2} \Bigr) \delta + \alpha \sum_{i=1}^n \dist{Y_i}{T_2}.
  \end{equation}
\end{lemma}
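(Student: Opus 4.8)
The plan is to follow the strategy of \cite[Lemma 7.2]{ggmt}: produce $U$ by an averaging/convexity argument applied to the entropic Balog--Szemer\'edi--Gowers (BSG) theorem. The key structural input is that $T_1+T_2+T_3=0$ forces the three pairs $(T_a,T_b)$ to be tightly linked: since $T_3 = -T_1-T_2$, knowing any two of the $T_i$ determines the third, and the joint entropy $\ent{T_1,T_2,T_3}=\ent{T_1,T_2}$ is small in the sense that $\ent{T_a}+\ent{T_b}-\ent{T_a,T_b}=\mutual{T_a:T_b}$ is controlled by $\delta$ for every pair. So first I would record that all three of $\mutual{T_1:T_2}$, $\mutual{T_1:T_3}$, $\mutual{T_2:T_3}$ are at most $\delta$, and that consequently (by the identity $T_2 = -(T_1+T_3)$ together with subadditivity / the cited Ruzsa-calculus facts) the random variable $T_2$ behaves, up to entropy losses of $O(\delta)$, like a sum of two "almost independent" pieces. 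This is exactly the hypothesis shape that feeds the entropic BSG theorem.

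Second, I would apply the entropic BSG theorem (the version used in \cite{ggmt}, \cite[Lemma 3.3]{tao-entropy}) to the pair of variables whose near-independence is measured by $\delta$ — concretely, to copies of $T_1$ and $-T_3$ whose sum is $T_2$ — to extract a variable $U$, supported inside the support of $T_2$ (one arranges the BSG output to live on sums of the relevant supports, and since $T_1+T_3 = -T_2$ these sums lie in $\Supp(T_2)$, possibly after choosing the decomposition so that the output is literally a conditioned copy of $T_2$), with $\dist{U}{U} \ll \delta$. At this stage I have the first term of \eqref{eq:get-better} under control: $\dist{U}{U} \leq 2\delta$ after tracking constants, which is where the "$2$" in $\bigl(2+\tfrac{\alpha n}{2}\bigr)$ comes from.

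Third, I would bound $\sum_i \dist{Y_i}{U}$ by inserting $T_2$ as an intermediate point: by the entropic Ruzsa triangle inequality (\cref{ruzsa-calculus}\ref{rc-i}),
\[
  \dist{Y_i}{U} \leq \dist{Y_i}{T_2} + \dist{T_2}{U},
\]
so it suffices to show $\dist{T_2}{U} \ll \delta$, after which summing over $i$ gives the $\alpha\sum_i \dist{Y_i}{T_2}$ term plus $\alpha n \cdot O(\delta)$, yielding the $\tfrac{\alpha n}{2}\delta$ contribution once constants are chased. The bound $\dist{T_2}{U}\ll\delta$ should come out of the same BSG application: BSG guarantees not only that the extracted variable has small doubling but also that it is close in Ruzsa distance to the original variable $T_2$ (the "cost" of the refinement is again $O(\delta)$), so this is essentially bookkeeping on the BSG output.

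The main obstacle I anticipate is \emph{not} any single inequality but the packaging: getting the support condition "$\Supp(U)\subseteq\Supp(T_2)$" to hold exactly (rather than up to translates or inside $\Supp(T_1)+\Supp(T_3)$), and simultaneously getting both $\dist{U}{U}\ll\delta$ and $\dist{T_2}{U}\ll\delta$ out of one invocation of entropic BSG with the constants lined up to give precisely $2+\tfrac{\alpha n}{2}$. This requires choosing the right two summands presenting $T_2$ and the right conditioning in the BSG statement (likely conditioning on an auxiliary variable so that $U$ is a conditioned copy of $T_2$ itself), and then verifying that the identity $T_1+T_2+T_3=0$ really does let one absorb all the error terms into multiples of $\delta$ rather than into separate entropy quantities. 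Everything else — the triangle inequality, the reduction of $\mutual{T_a:T_b}$ to $\delta$, the final summation over $i$ — is routine Ruzsa calculus of the kind already used repeatedly in the paper.
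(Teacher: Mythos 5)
Your plan diverges from the paper's proof at exactly the step you flag as the potential obstacle, and that step is a genuine gap. You propose to bound $\dist{Y_i}{U}$ by the triangle inequality $\dist{Y_i}{U} \leq \dist{Y_i}{T_2} + \dist{T_2}{U}$ and then argue $\dist{T_2}{U} \ll \delta$, claiming entropic BSG ``guarantees \dots\ that it is close in Ruzsa distance to the original variable $T_2$.'' It does not. The entropic BSG output controls the \emph{conditional} self-distance $\sum_z p_{T_3}(z)\dist{T_2\,|\,T_3\mathop{=}z}{T_2\,|\,T_3\mathop{=}z}$; it says nothing about the distance between the conditioned variable $(T_2\,|\,T_3\mathop{=}z)$ and the unconditioned $T_2$. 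Even on average, the best you could get from the available tools is $\dist{T_2}{T_2\,|\,T_3} \leq \dist{T_2}{T_2} + \tfrac12\mutual{T_2:T_3}$ via \cref{ruzsa-calculus}\ref{rc-viii}, and the term $\dist{T_2}{T_2}$ is a free-standing quantity that is not controlled by $\delta$ (e.g.\ if $T_1,T_2$ are independent with $T_3=-T_1-T_2$, then $\mutual{T_1:T_2}=0$ but $\dist{T_2}{T_2}$ can be as large as you like). So the triangle-inequality route breaks down.

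The paper's proof sidesteps this entirely. It never compares $U$ to $T_2$. Instead it bounds the \emph{conditional} distance $\dist{Y_i}{T_2\,|\,T_3}$ directly using \cref{ruzsa-calculus}\ref{rc-viii}, which gives $\dist{Y_i}{T_2\,|\,T_3} \leq \dist{Y_i}{T_2} + \tfrac12\mutual{T_2:T_3} \leq \dist{Y_i}{T_2} + \tfrac{\delta}{2}$ — the cost is $\tfrac12\mutual{T_2:T_3}$, not $\dist{T_2}{U}$. It then forms the weighted average over $z$ (with weights $p_{T_3}(z)$) of the combined quantity
\[
  \dist{T_2\,|\,T_3\mathop{=}z}{T_2\,|\,T_3\mathop{=}z} + \alpha \sum_{i=1}^n \dist{Y_i}{T_2\,|\,T_3\mathop{=}z},
\]
shows this average is at most $\bigl(2+\tfrac{\alpha n}{2}\bigr)\delta + \alpha\sum_i\dist{Y_i}{T_2}$, and picks a single $z$ achieving at most the average, setting $U=(T_2\,|\,T_3\mathop{=}z)$. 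This simultaneous pigeonhole on the weighted sum is what allows both terms in \eqref{eq:get-better} to be controlled by one choice of $z$, and also what makes the constants $2$ and $\tfrac{\alpha n}{2}$ come out cleanly; a pointwise bound on $\dist{T_2}{U}$ is never needed, and indeed is unavailable. The earlier parts of your sketch — applying BSG with $X=T_1$, $Y=T_2$, using $T_1+T_2=-T_3$, and simplifying the BSG right-hand side to $\delta$ via $\ent{T_1,T_2}=\ent{T_1,T_3}=\ent{T_2,T_3}$ — match the paper.
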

\begin{proof}%
  Apply entropic Balog--Szemer\'edi--Gowers (\cref{ruzsa-calculus}\ref{rc-iv}) with $X=T_1$ and $Y=T_2$.
  Since $T_1+T_2=-T_3$, we find that
  \begin{align}\nonumber
    \sum_{z} p_{T_3}(z) & \dist{T_1 \,|\, T_3 \mathop{=} z}{T_2 \,|\, T_3 \mathop{=} z} \\ \nonumber
    &\leq  3 \mutual{T_1 : T_2} + 2 \ent{T_3} - \ent{T_1} - \ent{T_2} \\
    &\ = \mutual{T_1 : T_2} + \mutual{T_1 : T_3} + \mutual{T_2 : T_3}
    = \delta,\label{514a}
  \end{align}
  where the last line follows by observing
  \[
    \ent{T_1,T_2} = \ent{T_1,T_3} = \ent{T_2,T_3} = \ent{T_1,T_2,T_3}
  \]
  since any two of $T_1,T_2,T_3$ determine the third, and by unpacking the definition of mutual information.

  By~\eqref{514a} and the triangle inequality,
  \[
    \sum_z p_{T_3}(z) \dist{T_2 \,|\, T_3 \mathop{=} z}{T_2 \,|\, T_3\mathop{=}z} \leq 2 \delta
  \]
  and by \cref{ruzsa-calculus}\ref{rc-viii}, for each $Y_i$,
  \begin{align*}
    &\sum_z p_{T_3}(z) \dist{Y_i}{T_2 \,|\, T_3 \mathop{=} z} \\
    &\qquad= \dist{Y_i}{T_2 \,|\, T_3}
                                    \leq \dist{Y_i}{T_2} + \frac12 \mutual{T_2 : T_3}
                                    \leq \dist{Y_i}{T_2} + \frac{\delta}{2}.
  \end{align*}
  Hence,
  \begin{align*}
    &\sum_z p_{T_3}(z) \bigg( \dist{T_2 \,|\, T_3 \mathop{=} z}{T_2 \,|\, T_3 \mathop{=} z} + \alpha \sum_{i=1}^n \dist{Y_i}{T_2 \,|\, T_3 \mathop{=} z} \bigg) \\
    &\qquad \leq
    \Bigl(2 + \frac{\alpha n}{2} \Bigr) \delta + \alpha \sum_{i=1}^n \dist{Y_i}{T_2},
  \end{align*}
  and the result follows by setting $U=(T_2 \,|\, T_3 \mathop{=} z)$ for some $z$ such that the quantity in parentheses on the left-hand side is at most the weighted average value.
\end{proof}

Finally, we can put this all together.
For each value $W=w$, apply \cref{lem:get-better} to
\[
  T_1 = (Z_1 \,|\, W \mathop{=} w),\
  T_2 = (Z_2 \,|\, W \mathop{=} w),\
  T_3 = (Z_3 \,|\, W \mathop{=} w)
\]
with $Y_i=X_i$ and $\alpha=\eta/m$, with $\eta$ the constant in the statement of \cref{thm-main-multi-dec}.
Write
\[
  \delta_w \coloneqq \mutual{T_1 : T_2} + \mutual{T_1 : T_3} + \mutual{T_2 : T_3}
\]
for this choice, and note that
\begin{align}
  \label{eq:delta-w}
  \nonumber
  \delta_\ast \coloneqq \sum_w p_W(w) \delta_w &= \mutual{Z_1 : Z_2 \,|\, W} + \mutual{Z_1 : Z_3 \,|\, W} + \mutual{Z_2 : Z_3 \,|\, W} \\
                         & \leq 6 m(2m+1) \eta k \ll \eta m^2 k
\end{align}
by \cref{prop:52}.
Write $U_w$ for the random variable guaranteed to exist by \cref{lem:get-better},
so that~\eqref{eq:get-better} gives \begin{equation}
  \label{eq:uw1}
  \dist{U_w}{U_w} \leq \Bigl( 2 + \frac{\alpha m}{2} \Bigr) \delta_w + \alpha \sum_{i=1}^m \bigl( \dist{X_i}{T_2} - \dist{X_i}{U_w} \bigr).
\end{equation}
Note that (by \cref{lem:get-better}) the support of $U_w$ is contained in that of $Z_2$. Recalling that $Z_2 = \sum_{i,j \in \Z/m\Z} j Y_{i,j}$, with the $Y_{i,j}$ all being copies of $X_i$ and hence supported on $S$, we see that the support of $U_w$ is contained in $m^3 S$.

Let $(U_w)_I$ denote the tuple consisting of the same variable $U_w$ repeated $m$ times.
By \cref{lem:ruzsa-multi}(iii),
\begin{equation}
  \label{eq:uw2}
  \multidist{(U_w)_I} \leq m \dist{U_w}{U_w}.
\end{equation}
On the other hand, applying our hypothesis~\eqref{main-contra} gives
\begin{equation}
  \label{eq:uw3}
  \multidist{(U_w)_I} \geq (1-\eta) k - \eta \sum_{i=1}^m \dist{X_i}{U_w}.
\end{equation}
Combining~\eqref{eq:uw1},~\eqref{eq:uw2} and~\eqref{eq:uw3} and averaging over $w$ (with weight $p_W(w)$), and recalling the value $\alpha=\eta/m$, gives
\[
   m \Bigl( 2 + \frac{\eta}{2} \Bigr) \delta_\ast + \eta \sum_{i=1}^m \dist{X_i}{Z_2 | W}
  \geq (1 - \eta) k
\]
since the terms $\dist{X_i}{U_w}$ cancel by our choice of $\alpha$.
Substituting in \cref{lem:ruzsa-w-z}(iv) and~\eqref{eq:delta-w}, and using the fact that $2 + \frac{\eta}{2} < 3$, we have
\[
  m^3 \eta k + \eta (m^2 \log_2 m ) k \gg k.
\]
Recall that, in the statement of \cref{thm-main-multi-dec}, $\eta$ was taken to be $c/m^3$. If the constant $c$ is sufficiently small, this gives a contradiction unless $k=0$.
This is what we needed to prove, and the proof of \cref{thm-main-multi-dec} is complete.

\appendix

\section{Entropic Ruzsa calculus}\label{entropy-app}

In this appendix we collect a number of useful inequalities involving entropic Ruzsa distance and related quantities.

\begin{proposition}\label{ruzsa-calculus}
  Let $G$ be an abelian group, and let $X,Y,Z$ be $G$-valued random variables. Then we have the following statements.
  \begin{enumerate}[label=\textup{(\roman*)}]
    \item\label{rc-i}%
      $\dist{X}{Y} = \dist{Y}{X} \geq 0$ and $\dist{X}{Z} \leq \dist{X}{Y} + \dist{Y}{Z}$.
    \item\label{rc-ii}%
      $\max( \ent{X}, \ent{Y} ) - \mutual{X:Y} \leq \ent{X \pm Y}$ for either choice of sign $\pm$.  In particular, if $X,Y$ are independent, then $\ent{X}$, $\ent{Y}$ are both at most $\ent{X \pm Y}$.
    \item\label{rc-iv}%
      \uppar{Entropic Balog--Szemer\'edi--Gowers}
      \begin{align*} \sum_z p_{X+Y}&(z) \dist{X|X+Y=z}{Y|X+Y=z}\\ &  \leq 3 \mutual{X:Y} + 2\ent{X+Y} - \ent{X} - \ent{Y}.\end{align*}
    \item\label{rc-v}%
      If $(X,Z)$ are independent of $Y$ \uppar{but $X$ and $Z$ are not necessarily independent of each other}, then
      \[  \ent{X-Z} \leq \ent{X-Y} + \ent{Y-Z} - \ent{Y}. \]
    \item\label{rc-vii}%
      $\dist{X}{-Y} \leq 3 \dist{X}{Y}$.
    \item\label{rc-viii}%
      $\dist{X}{Y|Z} \leq \dist{X}{Y} + \tfrac{1}{2} \mutual{Y:Z}$.
    \item\label{rc-ix}%
      If $Y,Z$ are independent, then
      \[ \dist{X}{Y|Y+Z} \leq \dist{X}{Y} + \tfrac{1}{2} (\ent{Y+Z} - \ent{Z}).\]
    \item\label{rc-x}%
      If $X,Y_1,\dots,Y_n$ are jointly independent random variables, then
      \begin{equation}\label{kv-1}
        \Bigent{ X + \tsum_{i=1}^n Y_i } - \ent{X}  \leq \sum_{i=1}^n \bigl( \ent{X+Y_i}- \ent{X} \bigr)
      \end{equation} and
      \begin{equation}\label{ruzsa-2}
        \Bigdist{X}{\tsum_{i = 1}^n Y_i} \leq 2 \sum_{i=1}^n \dist{X}{Y_i}.
      \end{equation}
  \end{enumerate}
\end{proposition}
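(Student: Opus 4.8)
\emph{Proof proposal.} Every one of the eight assertions is a standard fact of ``entropic Ruzsa calculus'', and the plan is to reduce all of them to three primitives: the chain rules~\eqref{chain-rule}--\eqref{chain-rule'}; the submodularity inequality~\eqref{nonneg-cond}, i.e.\ non-negativity of (conditional) mutual information; and the elementary monotonicity principles that $\ent{f(W)}\le\ent W$, with equality when $f$ is injective on the support of $W$, and that conditioning never increases entropy. The single exception is the entropic Balog--Szemer\'edi--Gowers inequality \ref{rc-iv}: this is genuinely deeper and is not a formal consequence of the chain rule and submodularity, so I would not reprove it but simply import it from~\cite[Lemma 3.3]{tao-entropy}, in the variant form recorded in~\cite{ggmt}. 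Since several of the remaining parts feed into one another, I would present them in dependency order rather than in the order listed.

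The opening move is \ref{rc-ii}. Writing $\ent X-\mutual{X:Y}=\ent{X|Y}$ via the chain rule and applying the change of variables $(X,Y)\mapsto(X\pm Y,\,Y)$ gives $\ent{X|Y}=\ent{X\pm Y|Y}\le\ent{X\pm Y}$, and symmetrically for $Y$; the stated independent case is the specialisation $\mutual{X:Y}=0$. Applying \ref{rc-ii} to independent copies and averaging the two resulting bounds yields the non-negativity of $\dist{\cdot}{\cdot}$ in \ref{rc-i}, whose symmetry is immediate from the defining formula. Next, \ref{rc-viii} follows on unpacking the definition: $\dist{X}{Y|Z}$ equals $\ent{\tilde X-Y|Z}-\tfrac12\ent X-\tfrac12\ent{Y|Z}$ for an independent copy $\tilde X$ of $X$, and bounding $\ent{\tilde X-Y|Z}\le\ent{\tilde X-Y}$ and substituting $\ent{Y|Z}=\ent Y-\mutual{Y:Z}$ gives \ref{rc-viii}. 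Then \ref{rc-ix} is the case $Z\mapsto Y+Z$ of \ref{rc-viii} combined with $\mutual{Y:Y+Z}=\ent{Y+Z}-\ent{Y+Z|Y}=\ent{Y+Z}-\ent Z$, the last step using $Y\perp Z$.

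The structural heart is the entropic Ruzsa triangle inequality \ref{rc-v}, which I would prove, for variables with $(X,Z)$ independent of $Y$, along the chain
\[ \ent{X-Y}+\ent{Y-Z}\ge\ent{X-Y,\,Y-Z}=\ent{X-Z}+\ent{X-Y|X-Z}=\ent{X-Z}+\ent{Z-Y|X-Z}\ge\ent{X-Z}+\ent{Y|X-Z}=\ent{X-Z}+\ent Y. \]
Here the first inequality is subadditivity; the first equality uses the chain rule and the fact that $X-Z$ is a function of $(X-Y,Y-Z)$; the second equality uses $X-Y=(X-Z)+(Z-Y)$, so that given $X-Z$ each of $X-Y$ and $Z-Y$ determines the other; the middle inequality is \ref{rc-ii} applied inside each fibre $\{X-Z=c\}$, on which $Z$ and $Y$ are conditionally independent since $(X,Z)\perp Y$; and the final equality uses $Y\perp(X-Z)$. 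The triangle inequality in \ref{rc-i} is then the special case of \ref{rc-v} obtained by taking jointly independent copies of $X,Y,Z$ and unpacking the definition of Ruzsa distance. For \ref{rc-vii} I would follow~\cite{ggmt}, combining the triangle inequality with entropic sumset estimates to get $\dist{X}{-Y}\le\dist{X}{Y}+\dist{Y}{-Y}$ together with $\dist{Y}{-Y}\le2\dist{X}{Y}$, which delivers the constant $3$.

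Finally \ref{rc-x}. For the Kaimanovich--Vershik--Madiman bound~\eqref{kv-1}, induction on $n$ reduces matters to $n=2$, i.e.\ to $\ent{X+Y_1+Y_2}+\ent X\le\ent{X+Y_1}+\ent{X+Y_2}$; writing $W=X+Y_1+Y_2$ and using the bijections $(W,Y_1)\leftrightarrow(X+Y_2,Y_1)$, $(W,Y_2)\leftrightarrow(X+Y_1,Y_2)$, $(W,Y_1,Y_2)\leftrightarrow(X,Y_1,Y_2)$ together with joint independence, the relation $\mutual{Y_1:Y_2|W}\ge0$ expands to exactly this inequality. The distance inequality~\eqref{ruzsa-2} then follows by a short computation: apply~\eqref{kv-1} to the independent copies $\tilde X,-\tilde Y_1,\dots,-\tilde Y_n$ to bound $\ent{\tilde X-\tsum_i\tilde Y_i}$, and combine with $\ent{\tsum_i\tilde Y_i}\ge\max_i\ent{Y_i}$ and $\dist{X}{Y_i}\ge\tfrac12|\ent X-\ent{Y_i}|$ — both instances of \ref{rc-ii} — to trade the surplus $\ent X$ terms against the $\ent{Y_i}$ terms and land on~\eqref{ruzsa-2} with the stated factor $2$. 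Among the parts proved from scratch, \ref{rc-v} needs the cleverest bookkeeping (the key being to pass, within each fibre of $X-Z$, from $X-Y$ to $Z-Y$ so as to invoke the conditional independence of $Z$ and $Y$); the entropic BSG inequality \ref{rc-iv} remains the only genuinely hard ingredient, and I would treat it as a black box.
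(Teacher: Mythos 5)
Your proposal is correct, and it is more self-contained than the paper's, which proves most of this proposition by citation: the paper refers to~\cite[Appendix A]{ggmt} for \ref{rc-i}--\ref{rc-iv}, to~\cite[Lemma 1.1(i)]{gmt} for \ref{rc-v}, to~\cite[Theorem 1.10]{tao-entropy} for \ref{rc-vii}, and to~\cite[Lemma 5.1]{ggmt} for \ref{rc-viii}, proving only \ref{rc-ix} and \ref{rc-x} in-line. You instead derive \ref{rc-i}, \ref{rc-ii}, \ref{rc-v}, \ref{rc-viii}--\ref{rc-x} from the chain rule and submodularity, importing \ref{rc-iv} as a black box exactly as the paper does; all of these derivations check out, and in particular your submodularity proof of \ref{rc-v} and your identity reducing the $n=2$ case of~\eqref{kv-1} to $\mutual{Y_1:Y_2|X+Y_1+Y_2}\geq 0$ are the standard arguments. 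Two small points. First, the auxiliary bound $\dist{Y}{-Y}\leq 2\dist{X}{Y}$ on which your treatment of \ref{rc-vii} leans is true, but it is not a direct corollary of the triangle inequality; the clean derivation applies~\eqref{kv-1} with $n=2$ to $X,-Y_1,-Y_2$ to obtain $\ent{X-Y_1-Y_2}+\ent{X}\leq 2\ent{X-Y}$ and then lower bounds the left-hand side by $\ent{Y_1+Y_2}+\ent{X}$ via \ref{rc-ii}. So \ref{rc-x} must precede \ref{rc-vii} in your dependency order (in your write-up \ref{rc-x} comes ``finally''), and this step should be spelled out rather than gestured at as ``entropic sumset estimates.'' Second, the reference you propose for \ref{rc-vii} is off: \cite{ggmt} works exclusively over $\F_2^n$, where $-Y=Y$ and \ref{rc-vii} is vacuous, so the needed argument does not appear there; the relevant source is~\cite[Theorem 1.10]{tao-entropy}, as the paper cites. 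With those adjustments your version has the advantage of being verifiable in place, which is a real gain over a proposition otherwise proved almost entirely by reference.
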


\begin{proof}  For~\ref{rc-i}--\ref{rc-iv}, see~\cite[Appendix A]{ggmt}.  For~\ref{rc-v}, see~\cite[Lemma 1.1 (i)]{gmt} which, after unpacking the definitions there, is the same statement.
  For~\ref{rc-vii}, see~\cite[Theorem 1.10]{tao-entropy}.
  For~\ref{rc-viii}, see~\cite[Lemma 5.1]{ggmt}.
  To prove~\ref{rc-ix} (which was also established in~\cite[Lemma 5.2]{ggmt}), we apply part~\ref{rc-viii} and calculate
\begin{align*}
  \mutual{Y:Y+Z}
  &= \ent{Y
  } + \ent{Y+Z} - \ent{Y,Y+Z} \\
  &= \ent{Y} + \ent{Y+Z} - \ent{Y,Z}\\
  &= \ent{Y+Z} - \ent{Z}.
\end{align*}

Now we establish~\ref{rc-x}, which is essentially due to Madiman~\cite{madiman}, and related to an earlier inequality of Kaimanovich and Vershik~\cite{kaimanovich-vershik}.
The inequality~\eqref{kv-1} follows easily by induction from the $n = 2$ case, which is~\cite[Lemma A.1]{ggmt}.
We turn now to~\eqref{ruzsa-2}.
From the definition of distance, and replacing all $Y_i$ by $-Y_i$, we may rewrite~\eqref{kv-1} as
  \begin{align*}
  \bigdist{X}{\tsum_{i=1}^n Y_i} + \tfrac{1}{2}  \Bigl(& \Bigent{\tsum_{i = 1}^n Y_i}-\ent{X}\Bigr) \\ & \leq \sum_{i=1}^n \Bigl(\dist{X}{Y_i} + \tfrac{1}{2} \bigl(\ent{X}- \ent{Y_i}\bigr)\Bigr).\end{align*}
  However, by the nonnegativity of multidistance we have $\bigent{\sum_{i=1}^n Y_i} \geq \tfrac{1}{n} \sum_{i = 1}^n \ent{Y_i}$, so after rearranging we obtain
    \[
    \bigdist{X}{\tsum_{i=1}^n Y_i} \leq \sum_{i=1}^n \dist{X}{Y_i} + \frac{n-1}{2n} \sum_{i=1}^n\bigl(\ent{Y_i}- \ent{X}\bigr).
  \]
  The claimed bound~\eqref{ruzsa-2} now follows using the inequality $\ent{Y_i} - \ent{X} \leq 2 \dist{X}{Y_i}$ (see~\cite[(A.12)]{ggmt}); in fact we can replace the constant $2$ by the slightly sharper constant $(2n-1)/n$.
\end{proof}

In a similar spirit, we give some further inequalities relating to multidistance.
\begin{lemma}%
  \label{lem:cool-dist-fact}
  Let $G$ be an abelian group,
  suppose $I$ is a finite index set, $|I| \geq 2$, and that $(X_i)_{i \in I}$ are jointly independent $G$-valued random variables.
  \begin{enumerate}[label=\textup{(\roman*)}]
    \item For any further random variable $Y$, and any $i_0 \in I$, we have
      \[
        \Bigdist{Y}{\tsum_{i \in I} X_i} \leq \dist{Y}{X_{i_0}} + \frac12 \Bigl( \Bigent{\tsum_{i \in I} X_i} - \ent{X_{i_0}} \Bigr).
      \]
    \item For any finite index set $J$, any jointly independent random variables $(Y_j)_{j \in J}$ independent of the $(X_i)_{i \in I}$, and any function $f \colon J \to I$, we have
      \[
        \Bigent{\tsum_{j \in J} Y_j} \leq \
        \Bigent{\tsum_{i \in I} X_i} + \sum_{j \in J} \bigl( \ent{Y_j - X_{f(j)}} - \ent{X_{f(j)}} \bigr).
      \]
    \item If we write $W = \sum_{i \in I} X_i$ then $\dist{W}{-W} \leq 2 \multidist{X_I}$.
  \end{enumerate}
  \end{lemma}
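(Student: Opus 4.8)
The plan is to obtain all three parts by unwinding the definitions and then assembling the entropic Ruzsa-calculus inequalities already recorded in \cref{ruzsa-calculus} (and, for (iii), also \cref{lem:ruzsa-multi}). For (i), let $\tilde X_i$ ($i\in I$) and $\tilde Y$ be jointly independent copies of the $X_i$ and of $Y$. After cancelling the common terms $\tfrac12\ent{Y}$ and $\tfrac12\bigent{\sum_{i\in I} X_i}$ from the two sides, the asserted inequality is equivalent to
\[
  \Bigent{\tilde Y + \tsum_{i\in I}\tilde X_i} + \ent{\tilde X_{i_0}} \leq \bigent{\tilde Y + \tilde X_{i_0}} + \Bigent{\tsum_{i\in I}\tilde X_i}
\]
(negating $\tilde Y$ or the individual summands if the sign convention in~\eqref{ruzsa-def} demands it, which affects no entropy). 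Setting $Z\coloneqq\sum_{i\neq i_0}\tilde X_i$, so that $\tilde Y,\tilde X_{i_0},Z$ are jointly independent and $\tilde X_{i_0}+Z=\sum_{i\in I}\tilde X_i$, this is exactly the $n=2$ instance of the Kaimanovich--Vershik--Madiman inequality~\eqref{kv-1} --- equivalently \cite[Lemma~A.1]{ggmt} --- with ``anchor'' $\tilde X_{i_0}$ and summands $\tilde Y$ and $Z$. Re-expressing in terms of Ruzsa distance gives (i).

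For (ii) I would induct on $|J|$, realising $\sum_{j\in J} Y_j$ from $\sum_{i\in I} X_i$ by a sequence of single replacements ``$X_{f(j)}\leadsto Y_j$'', each one to be charged at most $\ent{Y_j - X_{f(j)}}-\ent{X_{f(j)}}$. The clean case is that $f$ is a bijection: running over the partial sums $\sum_{j\le k}Y_j + \sum_{j>k}X_{f(j)}$, at each step the frozen part is independent of the two variables being swapped, so the single-step bound $\ent{A+Y_j}\leq \ent{A+X_{f(j)}}+\ent{Y_j-X_{f(j)}}-\ent{X_{f(j)}}$ for jointly independent $A,Y_j,X_{f(j)}$ is a direct application of \cref{ruzsa-calculus}\ref{rc-v} (take $X=Y_j$, $Z=-A$, $Y=X_{f(j)}$ there), and telescoping finishes. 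The general case reduces to this: the indices $i\notin f(J)$ may be discarded at the outset, because $\bigent{\sum_j Y_j}\leq\bigent{\sum_j Y_j + \sum_{i\notin f(J)}X_i}$ by independence; the one point that genuinely needs care is the inductive step when $f$ is not injective (some index is hit more than once), which is handled by a further small application of \cref{ruzsa-calculus}\ref{rc-v}.

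For (iii), unwind once more: with $W=\sum_{i\in I}X_i$ and $W'$ an independent copy, $\dist{W}{-W}=\bigent{W+W'}-\ent{W}$ (up to the sign convention), and writing $W=\sum_i\tilde X_i$, $W'=\sum_i\tilde X_i'$ for two independent families of copies we have $W+W'=\sum_i(\tilde X_i+\tilde X_i')$. Applying~\eqref{kv-1} with anchor $W$ and summands the $\tilde X_i'$, and then the data-processing step used to prove \cref{ruzsa-calculus}\ref{rc-x}, reduces the required bound to $\sum_{i\in I}\dist{X_i}{-X_i}$; one then bounds this last sum by $2\multidist{X_I}$ using the pairwise estimate \cref{lem:ruzsa-multi}(i) together with a short amount of further Ruzsa calculus relating the ``diagonal'' distances $\dist{X_i}{-X_i}$ to the off-diagonal ones $\dist{X_j}{-X_k}$. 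The genuinely delicate points in the whole lemma are the non-injective bookkeeping in (ii) and extracting the sharp constant $2$ in (iii) (which is best possible --- take one $X_i$ constant and another uniform on a Sidon set); everything else is routine manipulation of the inequalities in the appendix.
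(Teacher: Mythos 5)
Part (i) is correct and is essentially the paper's argument. However, (ii) and (iii) both have genuine gaps.

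For (ii), your "single replacement" telescoping scheme via \cref{ruzsa-calculus}\ref{rc-v} works cleanly only when $f$ is a bijection. You flag the non-injective case and say it is "handled by a further small application of \ref{rc-v}", but this does not resolve it: if $f(j_1)=f(j_2)=i$, after you trade $X_i$ for $Y_{j_1}$ there is no remaining $X_i$ in the partial sum to trade for $Y_{j_2}$, and inserting extra independent copies of $X_i$ changes the anchor entropy $\bigent{\sum_{i\in I}X_i}$ to $\bigent{\sum_{j\in J}X_{f(j)}}$, which is \emph{larger}, not smaller. The paper avoids all of this by using the whole sum $W=\sum_{i\in I}X_i$ as a universal anchor: first $\bigent{\sum_j Y_j}\leq\bigent{-W+\sum_j Y_j}$, then Kaimanovich--Vershik--Madiman on $-W$ with summands $Y_j$, and then \emph{part (i)} (specifically~\eqref{eq:cool-dist-fact}) to convert each increment $\ent{Y_j-W}-\ent{W}$ into $\ent{Y_j-X_{f(j)}}-\ent{X_{f(j)}}$. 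That route is completely insensitive to injectivity of $f$, so I'd suggest abandoning the induction and using (i) as the paper does.

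For (iii), your first step $\dist{W}{-W}\leq\sum_{i\in I}\dist{X_i}{-X_i}$ (via \eqref{kv-1} with anchor $W$) is fine, but the second step is not. You propose to bound $\sum_i\dist{X_i}{-X_i}$ by $2\multidist{X_I}$ using \cref{lem:ruzsa-multi}(i) plus the triangle inequality relating $\dist{X_i}{-X_i}$ to the off-diagonal $\dist{X_j}{-X_k}$; but any such relation passes through $\dist{X_j}{X_k}$ (same-sign) distances and therefore through \cref{ruzsa-calculus}\ref{rc-vii}, which costs a constant factor, and then the averaging from \cref{lem:ruzsa-multi}(i) costs a further factor of $m$. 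Concretely, following your plan one lands at a bound like $\sum_i\dist{X_i}{-X_i}\ll m\,\multidist{X_I}$, not $2\multidist{X_I}$. It is also not clear that the intermediate inequality $\sum_i\dist{X_i}{-X_i}\leq 2\multidist{X_I}$ is even true: it would (for $m=2$, with $X_2$ a copy of $-X_1$) force $\ent{X_1+X_1'}\leq\ent{X_1-X_1'}$ for every $X_1$, which is not a known inequality. The paper's proof of (iii) is structurally different and never passes through $\sum_i\dist{X_i}{-X_i}$: it applies \eqref{kv-1} twice to obtain $\ent{W+W'}\leq 3\ent{W}-\ent{X_a}-\ent{X_b}$ for every pair of \emph{distinct} indices $a\neq b$ (the distinctness being what lets one invoke \cref{ruzsa-calculus}\ref{rc-ii} to get $\ent{X_a+X_b}\leq\ent{W}$), and then \emph{averages over ordered pairs} $(a,b)$ to produce exactly the factor $\tfrac{2}{m}\sum_i\ent{X_i}$ appearing in $2\multidist{X_I}$. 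This averaging over pairs is the key idea that yields the sharp constant $2$, and it is missing from your sketch.

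Incidentally, your remark that the constant $2$ in (iii) is best possible for a Sidon-set example is a nice sanity check and is correct; but note that this also means the inefficiencies in your proposed route ($\ref{rc-vii}$ and the $m$-fold averaging loss) cannot be tolerated.
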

\begin{proof}%
  We may assume without loss of generality that $Y$, $(Y_j)_{j \in J}$ and $X_I$ are all independent of each other.
  Applying~\eqref{kv-1} with $n=2$, $X=X_{i_0}$, $Y_1=-Y$ and $Y_2=\sum_{i \ne i_0} X_i$ gives
  \begin{equation}
    \label{eq:cool-dist-fact}
    \Bigent{-Y + \tsum_{i \in I} X_i} \leq \ent{X_{i_0}-Y} + \Bigent{\tsum_{i \in I} X_i} - \ent{X_{i_0}}.
  \end{equation}
  Using the definition of Ruzsa distance, this gives (i).

  For (ii), write $W \coloneqq \sum_{i \in I} X_i$.
  Then
  \begin{align*}
    \Bigent{\tsum_{j \in J} Y_j} &\leq \Bigent{-W + \tsum_{j \in J} Y_j} \\
    &\leq \ent{W} + \sum_{j \in J} \bigl(\ent{Y_j-W} - \ent{W}\bigr) \\
    &\leq \ent{W} + \sum_{j \in J} \bigl( \ent{Y_j - X_{f(j)}} - \ent{X_{f(j)}} \bigr)
  \end{align*}
  where in the penultimate step we used~\eqref{kv-1} and in the last step we used~\eqref{eq:cool-dist-fact}.

  For (iii), take $(X'_i)_{i \in I}$ to be further independent copies of $(X_i)_{i \in I}$ and write $W' = \sum_{i \in I} X'_i$.
  Fix any distinct $a,b \in I$.

  Applying~\eqref{kv-1} with $n = 2$, $X = X_a$, $Y_1 = \sum_{i \neq a} X_i$ and $Y_2 = W'$, we obtain
  \begin{equation}\label{7922}
    \ent{W + W'} \leq
    \ent{W} + \ent{X_{a} + W'} - \ent{X_{a}}.
   \end{equation}
   Applying~\eqref{kv-1} with $n = 2$, $X = X'_b$, $Y_1 = X_a$ and $Y_2 = \sum_{i \neq b} X'_i$ gives
   \[ \ent{X_a + W'} \leq \ent{X_a + X_b} + \ent{W'} - \ent{X'_b}.\]
   Combining this with~\eqref{7922} and then applying \cref{ruzsa-calculus}\ref{rc-ii} gives
   \begin{align*}  \ent{W + W'}  & \leq    2\ent{W} + \ent{X_a + X_b}  - \ent{X_a} - \ent{X_b} \\ & \leq
    3 \ent{W} - \ent{X_a} - \ent{X_b}.
  \end{align*}
  Averaging this over all choices of $(a,b)$ gives $\ent{W} + 2 \multidist{X_I}$, and rearranging gives (iii).
\end{proof}

Next, we obtain an
entropic analogue of some estimates of Bukh~\cite{bukh} on sums of dilates.
\begin{lemma}\label{lem-lin}  Let $G$ be an abelian group, let $X,Y$ be independent $G$-valued random variables, and let $a \in \Z$. Then we have the following two inequalities.
\begin{enumerate}[label=\textup{(\roman*)}]
  \item $\ent{X - aY} - \ent{X} \leq 4 |a| \dist{X}{Y}$.
  \item $\ent{X - aY} - \ent{X} \leq (4 + 10 \lfloor \log_2 |a|\rfloor) \dist{X}{Y}$.
\end{enumerate}
\end{lemma}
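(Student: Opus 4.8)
The plan is to prove both parts by a peeling/induction argument on $|a|$, after reducing to the case $a \ge 1$: the case $a = 0$ is trivial, and for $a < 0$ one runs the same argument with $Y$ replaced by $-Y$ throughout (keeping in mind $\dist{X}{-Y} \le 3\dist{X}{Y}$ by \cref{ruzsa-calculus}\ref{rc-vii}); equivalently one proves in parallel the two statements $\ent{X+bY}-\ent{X} \le f(b)\dist{X}{Y}$ and $\ent{X - bY} - \ent{X} \le f(b)\dist{X}{Y}$ for $b \ge 1$. Along the way I would freely use standard consequences of the Ruzsa calculus in the appendix: for independent $U,V$ one has $\ent{U \pm V} - \ent{U} \ll \dist{U}{V}$, and by the triangle inequality (\cref{ruzsa-calculus}\ref{rc-i}) together with \cref{ruzsa-calculus}\ref{rc-vii}, $\dist{Y}{Y} \le 2\dist{X}{Y}$ and $\dist{Y}{-Y} \ll \dist{X}{Y}$.

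For part (i), I would induct on $a \ge 1$. In the inductive step, write $X - aY = (X - (a-1)Y) - Y$, introduce a fresh copy $Y'$ of $Y$ independent of $(X,Y)$, and apply the Ruzsa-type inequality \cref{ruzsa-calculus}\ref{rc-v} with pivot $Y'$ — legitimate because the pair $\bigl(X - (a-1)Y,\ Y\bigr)$ is a function of $(X,Y)$ and hence independent of $Y'$. This trades the last copy of $Y$ for the independent $Y'$ at an additive cost $\ent{Y' - Y} - \ent{Y'} \ll \dist{Y}{-Y} \ll \dist{X}{Y}$, leaving $\ent{(X-(a-1)Y) - Y'}$, in which $Y'$ is now independent of $X - (a-1)Y$. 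Bounding this by $\ent{X - (a-1)Y}$ up to $O(\dist{X}{Y})$ requires, beyond the main inductive estimate, an auxiliary bound of the shape $\dist{X - bY}{X} \ll (b+1)\dist{X}{Y}$; this is itself proved by the same peeling move (decomposing via the triangle inequality and again trading a copy of $Y$ for a fresh one), so I would carry the two estimates through the induction together. Summing the $O(\dist{X}{Y})$ per step over the $a$ steps gives $\ent{X - aY} - \ent{X} \ll a\,\dist{X}{Y}$, and chasing the (easy) constants gives the stated $4|a|$.

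For part (ii), I would exploit the binary expansion $|a| = \sum_{s=0}^{k} \varepsilon_s 2^s$ with $k = \lfloor \log_2 |a|\rfloor$, and build $X - |a|Y$ from $X$ through $O(k)$ elementary moves of two types: a \emph{doubling} $X - bY \rightsquigarrow X - 2bY$, and an \emph{increment} $X - bY \rightsquigarrow X - (b + 2^s)Y$. The crucial point — the difference from (i) — is that each such move can be carried out at a cost of only $O(\dist{X}{Y})$, \emph{uniformly in the current multiplier $b$}. The mechanism is the same: introduce a fresh copy $Y'$, use \cref{ruzsa-calculus}\ref{rc-v} to replace the copies of $Y$ peeled off by independent ones, and then apply the Kaimanovich--Vershik--Madiman inequality \cref{ruzsa-calculus}\ref{rc-x} (that is, \eqref{kv-1}) to see that adding a dilate of an \emph{independent} copy of $Y$ increments the entropy by only $O(\dist{X}{Y})$ — this is where the multiplier drops out, since a dilate such as $2^{s}Y'$ of an independent copy still contributes an increment controlled just by $\dist{X}{Y}$, not by $2^s$. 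Summing $O(\dist{X}{Y})$ over the $O(k)$ moves gives $\ent{X - aY} - \ent{X} \ll (1 + \log|a|)\dist{X}{Y}$, and tracking constants yields $(4 + 10\lfloor\log_2|a|\rfloor)\dist{X}{Y}$.

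The main obstacle, common to both parts, is that $X - bY$ is \emph{not} independent of $Y$, so the entropic sumset inequalities cannot be applied to the pair $(X - bY,\ Y)$ directly; this is what forces the detour through a fresh independent copy $Y'$ and \cref{ruzsa-calculus}\ref{rc-v}, whose $-\ent{Y'}$ term is exactly what prevents a spurious full extra copy of $\ent{X}$ or $\ent{Y}$ from appearing, and the concomitant need to thread the auxiliary Ruzsa-distance estimate through the induction. For (ii) specifically, the delicate part is to arrange the doubling step to cost $O(\dist{X}{Y})$ rather than $O(b\,\dist{X}{Y})$: this is the entropic shadow of the fact that $|A + k\cdot A|$ grows only polylogarithmically in the doubling constant of $A$, the $\log k$ coming from summing the $\log k$ dilates $2^{s}\cdot A$ via Plünnecke--Ruzsa rather than from iterating a doubling bound.
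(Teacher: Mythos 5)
Your overall strategy for (i) — peel off one copy of $Y$ at a time, swapping it for a fresh independent $Y'$ — is workable, but the way you propose to bound the per-step cost does not give a linear bound. You want to bound $\ent{(X-(a-1)Y)-Y'} - \ent{X-(a-1)Y}$ by $O(\dist{X}{Y})$, but the auxiliary estimate $\dist{X-bY}{X} \ll (b+1)\dist{X}{Y}$ only yields $\dist{X-(a-1)Y}{Y} \ll a\,\dist{X}{Y}$ and hence a per-step cost growing linearly in $a$ — a quadratic total. Worse, since the auxiliary estimate is itself coupled to $\ent{X-bY}-\ent{X}$ (one has $\dist{X-bY}{X} \le \tfrac12(\ent{X-bY}-\ent{X}) + 2\dist{X}{X}$ by the KV inequality), the joint recursion you describe is of the form $f(a) \le \tfrac32 f(a-1) + O(1)$, which is exponential in $a$. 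The correct tool here — which you have available, since you invoke it for (ii) — is the KV inequality \cref{ruzsa-calculus}\ref{rc-x} applied to the jointly independent triple $X, (a-1)Y, Y'$, giving $\ent{X-(a-1)Y-Y'} - \ent{X-(a-1)Y} \le \ent{X-Y'}-\ent{X} \le 2\dist{X}{Y}$ uniformly in $a$. With that substitution, the $Y'$-pivot route does recover $\ent{X-aY}-\ent{X} \le 4|a|\dist{X}{Y}$, and it is a genuine (dual) alternative to the paper, which instead pivots through a fresh copy $X'$ of $X$, writing $X-(a+1)Y = (X-Y)-aY$ and using \cref{ruzsa-calculus}\ref{rc-v} to exploit $\ent{X'-aY}=\ent{X-aY}$.

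For (ii), there is a more serious gap. Your central claim — that adding a dilate $2^s Y'$ of an independent copy of $Y$ costs only $O(\dist{X}{Y})$, ``uniformly in the multiplier'' — is false: $\ent{X-2^sY'}-\ent{X}$ \emph{is} the quantity being bounded, and it is of order $s\,\dist{X}{Y}$, not $O(\dist{X}{Y})$. Concretely, if one implements the doubling step as you suggest (write $X-2aY=(X-aY)-aY$, pivot through $aY'$, then apply KV), one gets $\ent{X-aY-aY'}-\ent{X} \le 2\bigl(\ent{X-aY}-\ent{X}\bigr)$, i.e.\ the \emph{multiplicative} recursion $f(2a) \le 2f(a)+O(1)$, yielding $f(2^k)=\Omega(2^k)$ — linear in $a$, no better than (i). The paper's doubling step $f(2a)\le f(a)+6$ is additive, and the mechanism is essentially different from what you describe: it pivots through $2X'$ (a dilate of a fresh copy of $X$, not of $Y$) via \cref{ruzsa-calculus}\ref{rc-v}, and then proves the crucial but non-obvious inequality $\ent{2X'-2aY}-\ent{2X'} \le \ent{X'-aY}-\ent{X'}$ by a submodularity argument (conditioning on $aY$), which makes the per-doubling cost depend only on $\ent{X-2X'}-\ent{X} \ll \dist{X}{Y}$ and not on $a$. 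This submodularity step — not the Plünnecke-style intuition you invoke at the end — is the idea your proposal is missing.
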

\begin{proof}%
  Let $a$ be any integer, and let $X'$ be another independent copy of $X$.
  We will show two key inequalities:
  \begin{equation}
    \label{eq:ineq1}
    \ent{X - (a \pm 1) Y} \leq \ent{X - a Y} + \bigl(\ent{X-Y-X'} - \ent{X}\bigr)
  \end{equation}
  and
  \begin{equation}
    \label{eq:ineq2}
    \ent{X - 2 a Y} \leq \ent{X - a Y} +  \bigl(\ent{X - 2X'} - \ent{X}\bigr).
  \end{equation}
  We first show why these suffice.
  By~\eqref{kv-1} with $n = 2$ we have
  \[ \ent{Y - X + X'} \leq \ent{Y - X} + \ent{Y + X'} - \ent{Y},\] which rearranges to give
  \[ \ent{X-Y-X'} - \ent{X} \leq \dist{X}{Y}+\dist{X}{-Y}.
  \] By \cref{ruzsa-calculus}\ref{rc-vii}, this is at most $4 \dist{X}{Y}$, and so~\eqref{eq:ineq1} gives
    \begin{equation}\label{f-recur-1}
    \ent{X - (a \pm 1) Y} \leq \ent{X - a Y} + 4 \dist{X}{Y}.
  \end{equation}
  By a straightforward induction on $a$, this proves (i).

  Now let $X''$ be a further independent copy of $X$.  By the case $a = 1$ of~\eqref{eq:ineq1} applied to $X, X'$ we obtain
  \[
    \ent{X-2X'} \leq \ent{X-X'} + \ent{X-X'-X''} - \ent{X},
  \]
  and another application of~\eqref{kv-1} with $n = 2$ gives
  \[
    \ent{X-X'-X''} \leq \ent{X-X'} + \ent{X-X''} - \ent{X} = \ent{X} + 2 \dist{X}{X}.
  \]
  Hence,
  \[
    \ent{X-2X'} \leq \ent{X} + 3 \dist{X}{X} \leq \ent{X} + 6 \dist{X}{Y}
  \]
  where in the second bound we use the triangle inequality. Hence,~\eqref{eq:ineq2} applied to $X$ and $Y$ implies
  \begin{equation}\label{f-recur-2}
    \ent{X - 2 a Y} \leq \ent{X - a Y} + 6 \dist{X}{Y}.
  \end{equation}
 Inequalities~\eqref{f-recur-1} and~\eqref{f-recur-2} imply that we may recursively bound
  \[
    \ent{X - a Y} - \ent{X} \leq f(a) \dist{X}{Y},
  \]
  where $f \colon \Z \to \Z_{\ge 0}$ is the smallest function obeying $f(0) = 0$, $f(a\pm 1) \leq f(a)+4$ and $f(2a) \leq f(a) + 6$.
  Certainly, we may insert a new binary digit at the start of $a$ at the cost of increasing $f(a)$ by at most $10$, which gives the bound
  $f(a) \leq 4 + 10 \lfloor \log_2 |a| \rfloor$,  as required for (ii).

  It remains to establish~\eqref{eq:ineq1} and~\eqref{eq:ineq2}.
  For the former, we write $X-(a+1)Y$ as $(X-Y) - aY$ and apply \cref{ruzsa-calculus}\ref{rc-v} with the variables $(X,Y,Z)$ there replaced by $(X-Y, X', aY)$ to obtain
  \[
    \ent{X-Y-aY} \leq \ent{X-Y-X'} + \ent{X'-aY} - \ent{X'}.
  \]
  On relabeling this is exactly~\eqref{eq:ineq1} in the case $a+1$.
  For $a-1$, we simply replace all terms $X-Y$ with $X+Y$, noting that $\ent{X-X'+Y}=\ent{X-X'-Y}$.

  For~\eqref{eq:ineq2} we apply the triangle inequality \cref{ruzsa-calculus}\ref{rc-i} (or if you prefer, \cref{ruzsa-calculus}\ref{rc-v} again) to get
  \[
    \ent{X-2aY} \leq \ent{X-2X'} + \ent{2X'-2aY} - \ent{2X'}.
  \]
 It thus suffices to show that
  \begin{equation}\label{need-a6}
    \ent{2X'-2aY} - \ent{2X'} \leq \ent{X'-aY} - \ent{X'}.
  \end{equation}
  To see this, observe that by submodularity we have
  \begin{align*}
   \ent{X'-aY \,|\, 2X'-2aY} & \geq \ent{X'-aY \,|\, 2X'-2aY, aY} \\ & = \ent{X' \,|\, 2X'-2aY, aY} \\ & = \ent{X' \,|\, 2X', aY} =  \ent{X' \,|\, 2 X'},\end{align*}
  and this rearranges to give the claimed inequality~\eqref{need-a6}.
  This completes the proof.
\end{proof}

Finally, for reference we recall the \emph{data processing inequality}, a standard result on mutual information.
\begin{lemma}%
  \label{lem:data-processing}
  Let $X,Y,Z$ be random variables.  For any functions $f,g$ on the ranges of $X,Y$ respectively, we have $\mutual{ f(X):g(Y) | Z} \leq \mutual{ X:Y|Z }$.
\end{lemma}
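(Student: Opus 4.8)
The plan is to reduce the claim to its two one-sided forms, $\mutual{f(X):Y|Z} \leq \mutual{X:Y|Z}$ and $\mutual{X:g(Y)|Z} \leq \mutual{X:Y|Z}$, and then chain them: $\mutual{f(X):g(Y)|Z} \leq \mutual{f(X):Y|Z} \leq \mutual{X:Y|Z}$. Since conditional mutual information is manifestly symmetric in its first two arguments (see~\eqref{cond-form-mutual}), it suffices to prove the first inequality; the second is the same statement with the roles of the two variables exchanged.

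To prove $\mutual{f(X):Y|Z} \leq \mutual{X:Y|Z}$, I would introduce the compound variable $(X, f(X))$ and evaluate $\mutual{(X,f(X)):Y|Z}$ in two ways, purely via the definition~\eqref{cond-form-mutual} and the conditional chain rule~\eqref{chain-rule'}. First, since $X$ and $(X,f(X))$ determine one another, one has $\ent{X,f(X)|Z} = \ent{X|Z}$ and $\ent{X,f(X),Y|Z} = \ent{X,Y|Z}$, whence
\[
  \mutual{(X,f(X)):Y|Z} = \mutual{X:Y|Z}.
\]
Second, expanding the joint variable the other way — a one-step chain rule $\mutual{(U,V):Y|Z} = \mutual{V:Y|Z} + \mutual{U:Y\,|\,V,Z}$ for conditional mutual information, which follows directly from~\eqref{chain-rule'} — gives
\[
  \mutual{(X,f(X)):Y|Z} = \mutual{f(X):Y|Z} + \mutual{X:Y \,|\, f(X), Z} \geq \mutual{f(X):Y|Z},
\]
where the inequality is the nonnegativity~\eqref{nonneg-cond} of conditional mutual information. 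Combining the two displays yields $\mutual{f(X):Y|Z} \leq \mutual{X:Y|Z}$, and applying this once to each coordinate completes the proof.

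I do not expect any genuine obstacle here: the only care needed is bookkeeping — writing each conditional mutual information in terms of Shannon entropies so that the manipulations with $(X,f(X))$ become literal identities, and verifying the one-step chain rule for $\mutual{(U,V):Y|Z}$ from~\eqref{chain-rule'}. One could instead argue directly from $\ent{f(X)|Z} \leq \ent{X|Z}$ together with an estimate on $\ent{f(X)|Y,Z}$, but the compound-variable route is cleaner and reuses exactly the tools already recorded above.
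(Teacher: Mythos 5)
Your proof is correct and rests on the same core facts as the paper's (reduction by symmetry to the one-sided case, the equivalence of $X$ with $(X,f(X))$, and submodularity of entropy). The only cosmetic difference is that you work in the conditional setting throughout via the chain rule for conditional mutual information, whereas the paper first strips off the conditioning on $Z$ and then argues via $\ent{Y|X}=\ent{Y|f(X),X}\leq\ent{Y|f(X)}$ — these are the same inequality in different clothing.
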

\begin{proof}  It suffices to prove the unconditional version $\mutual{f(X):g(Y)} \leq \mutual{X:Y}$ of this inequality, as the conditional version then follows by conditioning to the events $Z=z$, multiplying by $p_Z(z)$, and summing over $z$.  By symmetry and iteration it will suffice to prove the one-sided data processing inequality $\mutual{f(X):Y} \leq \mutual{X:Y}$, or equivalently that $\ent{Y|X} \leq \ent{Y|f(X)}$.  But as $X$ determines $f(X)$, we have $\ent{Y|X} = \ent{Y|f(X),X}$, and the claim follows from submodularity~\eqref{nonneg-cond}.
\end{proof}

\section{From entropic PFR to combinatorial PFR}\label{comb-app}

In this appendix we repeat the arguments from~\cite{gmt} (which were specialized to the $\F_2$ case) to derive \cref{mainthm} from \cref{main-entropy}.

Let $m,A,K$ be as in \cref{mainthm}.  By translation we may assume without loss of generality that $A$ contains $0$.
Let $U_A$ be the uniform distribution on $A$.  The doubling condition $|A+A| \leq K|A|$ and Jensen's inequality give
\[  \dist{U_A}{-U_A} \leq \log K. \]
By \cref{main-entropy} (with $S$ equal to $A \cup -A$, which is clearly symmetric and contains $0$), we may thus find a subspace $H$ of $G$, with $H \subseteq \ell S \subseteq \ell A - \ell A$ for some $\ell \ll (2 + m\log K)^{O(m^3 \log m)}$, such that
\[  \dist{U_A}{U_H} \ll  m^3 \log K. \]
Now, since $\tfrac{1}{2} |\ent{X} - \ent{Y}| \leq \dist{X}{Y}$ we conclude that
\[  \log |H| = \log |A| + O( m^3 \log K ) \]
and also
\[  \ent{U_A - U_H} = \log |H| + O( m^3 \log K ). \]
Applying~\cite[(A.2)]{gmt}, we conclude the existence of a point $x_0 \in \F_p^n$ such that
\[  p_{U_A-U_H}(x_0) \geq e^{-\ent{U_A - U_H}} \geq K^{-O(m^3)}/ |H|, \]
or in other words
\[  |A \cap (H + x_0)| \geq K^{-O(m^3)} |H|. \]
Applying the Ruzsa covering lemma~\cite[Lemma 2.14]{tao-vu}, we may thus cover $A$ by at most $K^{O(m^3)}$ translates of
\[  (A \cap (H + x_0) - A \cap (H + x_0)) \subseteq H. \]
By subdividing $H$ into cosets $H'$ of a subgroup of cardinality between $|A|/m$ and $|A|$ if necessary, the claim then follows.

\section{Inverse theorem for \texorpdfstring{$U^3$}{U3}}\label{u3-app}

In this appendix we sketch a derivation of \cref{inverse-cor} from \cref{mainthm}.  In the even characteristic case $p=2$ this implication was worked out in~\cite{gt-equiv,lovett,samorodnitsky}.  In odd characteristic, the implication is almost worked out in~\cite{gt-inverseu3}, except that that argument contained a somewhat gratuitous invocation of the Bogolyubov theorem, which currently does not have polynomial bounds and so does not recover the full strength \cref{inverse-cor}.  However, it is not difficult to modify the argument in~\cite{gt-inverseu3} to avoid the use of Bogolyubov's theorem, and we do so here.  We will assume familiarity with the notation in that paper.

Let the notation be as in \cref{inverse-cor}.  Applying~\cite[Proposition 5.4]{gt-inverseu3} (and identifying $\F_p^n$ with its Pontryagin dual $\widehat{\F_p^n}$ in the standard fashion), one can find a subset $H'$ of $\F_p^n$, a function $\xi : H' \to \F_p^n$ whose graph $\Gamma' \coloneqq \{ (h,\xi_h): h \in H' \} \subseteq \F_p^n \times \F_p^n$ obeys the estimates
\[  |\Gamma'| \gg \eta^{O(1)} p^n \]
and
\[  |2\Gamma'| \ll \eta^{-O(1)} p^n, \]
and such that
\[  |\E_{x \in \F_p^n} f(x+h) \overline{f(x)} e_p(-\xi_h \cdot x)| \gg \eta^{O(1)} \]
for all $(h,\xi_h) \in \Gamma'$, where $\cdot \colon \F_p^n \times \F_p^n \to \F_p$ is the usual inner product.

Applying \cref{mainthm}, we may cover $\Gamma'$ by $O(\eta^{-O(p^3)})$ translates of a subgroup $H$ of $\F_p^n \times \F_p^n$ of cardinality at most $|H'| \leq p^n$.  If we introduce the groups
\[  H_1 \coloneqq \{ y \in \F_p^n: (0,y) \in H \} \]
and
\[  H_0 \coloneqq \{ x \in \F_p^n: (x,y) \in H \hbox{ for some } y \in \F_p^n \} \]
then $|H| = |H_0| |H_1|$, and $H'$ can be covered by $O(\eta^{-O(p^3)})$ translates of $H_0$, hence
\[  |H_0| \gg \eta^{O(p^3)} p^n \]
and hence
\[  |H_1| = |H|/|H_0| \ll \eta^{-O(p^3)}. \]
By considering a complementing subspace of $0 \times H_1$ in $H$, we may write
\[  H = \{ (x,y): x \in H_0; y - M_0x \in H_1 \} \]
for some linear transformation $M_0 \colon H_0 \to \F_p^n$, which we may then extend (somewhat arbitrarily) to a homomorphism from $\F_p^n$ to $\F_p^n$.  As we are in odd characteristic, we can write $M_0=2M$ for some other linear map $M \colon \F_p^n \to \F_p^n$. One can then cover $H$ by $O(|H_1|) = O(\eta^{-O(p^3)})$ translates of the graph $\{ (x, 2Mx): x \in \F_p^n\}$, and hence $\Gamma'$ can also be covered by
$O(\eta^{-O(p^3 )})$ translates of this graph.  We conclude that
\[  \E_{h \in \F_p^n} 1_H(h) 1_{\xi_h = 2Mh + \xi_0} \gg \eta^{O(p^3 )} \]
for some $\xi_0 \in \F_p^n$.  This is a variant of the conclusion of~\cite[Proposition 6.1]{gt-inverseu3}, but without the restriction to the subspace $V$ (and with slightly worse exponents).  Repeating the ``symmetry argument'' in~\cite[\S 6, Step 2]{gt-inverseu3}, one can then find a subspace $W$ of $\F_p^n$ with
\begin{equation}\label{wpn}
  |W|/p^n \gg \eta^{O(p^3)}
\end{equation}
such that $M w \cdot w' = M w' \cdot w$ for all $w,w' \in W$.  Repeating the arguments in~\cite[\S 6, Step 3]{gt-inverseu3}, one then concludes that
\[  \E_{y \in \F_p^n} \|f\|_{u^3(y+W)} \gg \eta^{O(p^3)}. \]
By\footnote{This result relies on~\cite[Corollary 3.2]{gt-inverseu3}.  As pointed out very recently in~\cite[Remark 4.3]{cgss}, this corollary is false in the generality stated, but holds when $W$ is a complemented subgroup.  Fortunately, in $\F_p^n$, all subgroups are complemented, so this is not an issue for the current argument.}~\cite[Theorem 2.3(ii)]{gt-inverseu3} we have
\[  \|f\|_{u^3(\F_p^n)} \geq p^{-n} |W| \| f\|_{u^3(y+W)} \]
for all $y \in \F_p^n$, hence by~\eqref{wpn}
\[  \|f\|_{u^3(\F_p^n)} \gg \eta^{O(p^3)}. \]
By the definition of the $u^3$ norm, this gives the claim.

\end{document}